\theoremstyle{plain}
\newtheorem{dfn}[subsection]{Definition}
\newtheorem{thm}[subsection]{Theorem}
\newtheorem{prp}[subsection]{Proposition}
\newtheorem{cor}[subsection]{Corollary}
\newtheorem{lma}[subsection]{Lemma}
\theoremstyle{remark}
\newtheorem{rmk}[subsection]{Remark}
\newtheorem{exm}[subsection]{Example}
\newtheorem{exms}[subsection]{Examples}
\def\RR{\mathbb{R}}
\def\wRR{\widehat{\mathbb{R}}}
\def\SS{\mathbb{S}}
\def\inc{\hookrightarrow}
\def\Ob{\mathrm{Ob}}
\def\NN{\mathbb{N}}
\def\Iso{\mathrm{Iso}}
\def\op{\mathrm{op}}
\def\EE{\mathcal{E}}
\def\GG{\mathbb{G}}
\def\lrto{\longrightarrow}
\def\CC{\mathbb{C}}
\def\DD{\mathbb{D}}
\def\Cat{\mathrm{Cat}}
\def\al{\alpha}
\def\be{\beta}
\def\Aut{\mathrm{Aut}}
\def\eqv{\overset\sim\lrto}
\def\rto{\longrightarrow}
\def\onto{\twoheadrightarrow}
\def\Fin{Fin}
\def\Sets{Sets}
\def\OO{\mathcal{O}}
\def\card{\mathrm{card}}
\def\dim{\mathrm{dim}}
\def\sg{\sigma}
\def\Hom{\mathrm{Hom}}
\def\Sg{\Sigma}
\def\sk{\mathit{sk}}
\def\cosk{\mathit{cosk}}
\def\Arr{\mathrm{Arr}}
\def\ZZ{\mathbb{Z}}
\begin{document}
\title{On an extension of the notion of Reedy category}

\author{Clemens Berger and Ieke Moerdijk}

\date{September 17, 2008}

\subjclass{Primary 18G55, 55U35; Secondary 18G30, 20N99}
\keywords{Generalized Reedy category, Quillen model category, crossed group, dendroidal set}

\begin{abstract}We extend the classical notion of a Reedy category so as to allow non-trivial automorphisms. Our extension includes many important examples occuring in topology such as Segal's category $\Gamma$, or the total category of a crossed simplicial group such as Connes' cyclic category $\Lambda$. For any generalized Reedy category $\RR$ and any cofibrantly generated model category $\EE$, the functor category $\EE^\RR$ is shown to carry a canonical model structure of Reedy type.\end{abstract}

\maketitle

\section*{Introduction.}

A Reedy category is a category $\RR$ equipped with a structure which
makes it possible to prove that, for any Quillen \cite{Qu1} model
category $\EE$, the functor category $\EE^\RR$ inherits a model
structure, in which the cofibrations, weak equivalences and
fibrations can all three be described explicitly in terms of those
in $\EE$.  Prime examples of such Reedy categories are the simplex
category $\Delta$ and its dual $\Delta^\op$; the corresponding model
structure on cosimplicial spaces goes back to Bousfield and Kan
\cite{BK}, while the model structure on simplicial objects in an
arbitrary model category $\EE$ is described in an unpublished
manuscript by Reedy \cite{Ree}. The general result for an arbitrary
model category $\EE$ and Reedy category $\RR$ is by now a standard
and important tool in homotopy theory, well explained in several
textbooks, see e.g. \cite{GJ,Hi,Ho}.

As is well known, Reedy categories are skeletal, and moreover do not
permit non-trivial automorphisms. There are, however, important
cases in which it is possible to establish a Reedy-like model
structure on the functor category $\EE^\RR$  even though $\RR$ does
have non-trivial automorphisms. One example is the strict model
structure on $\Gamma$-spaces (space-valued presheaves on Segal's
\cite{Se} category $\Gamma$) established by Bousfield-Friedlander
\cite{BF}.  Another example is the case of cyclic spaces
(space-valued presheaves on Connes' category $\Lambda$, see
\cite{Co}). This paper grew out of a third example, namely the
category of dendroidal spaces \cite[Section 7]{MW} which carries a
Reedy-like model structure, although a dendroidal space is by
definition a presheaf on a category $\Omega$ of trees containing
many automorphisms. We expect this Reedy-like model structure on
dendroidal spaces (or a localization thereof) to be closely related
to a model structure on coloured topological operads, although the
precise relation remains to be worked out.

In this paper, we introduce the notion of a \emph{generalized Reedy
category}, and prove that for any such category $\RR$ and any
$\RR$-projective (e.g. cofibrantly generated) Quillen model category
$\EE$, the functor category $\EE^\RR$ inherits a model structure, in
which the cofibrations, weak equivalences and fibrations can again
be described explicitly in terms of those in $\EE$. Any classical
Reedy category is a generalized Reedy category in our sense; in
fact, a generalized Reedy category is equivalent to a classical one
if and only if it has no non-trivial automorphisms. Segal's category
$\Gamma$ (as well as its dual) and the cyclic category $\Lambda$ of
Connes are examples of generalized Reedy categories, as is any
(finite) group or groupoid.  The cyclic category is an example of
the total category associated to a crossed simplicial group
\cite{FL,Kr}; we will show that the total category of any
\emph{crossed group} on a classical Reedy category is a generalized
Reedy category. This method yields many interesting examples of
generalized Reedy categories with non-trivial automorphisms. In
particular, the category $\Omega$ mentioned above is of this type.
Other examples of generalized Reedy categories relevant in homotopy
theory are the \emph{orbit category} of a finite or compact Lie
group, and the total category associated to a \emph{complex of
groups}, see e.g. \cite{Ha}.

The results of this paper lead to several interesting questions. We
already mentioned the comparison between dendroidal spaces and
coloured topological operads, which we expect to be analogous to the
comparison between complete Segal spaces (a localization of the
Reedy model structure on simplicial spaces) and topologically
enriched categories --  see \cite{Bg, JT, Lu, Rez}. We expect the
Reedy model structure on spaces over a complex of groups to be
useful in describing the derived category of the corresponding
orbifold. A precise comparison would refine the weak homotopy
equivalence between (the classifying spaces of) the complex of
groups and the proper etale groupoid of the corresponding orbifold,
cf. \cite{Mo}.  Another topic to be explored further is the relation between various models for cyclic homology (see e.g. \cite{Qu2}) and the Reedy model structure on cyclic spaces given by applying our main theorem to Connes' category $\Lambda$. In this context, we note that it is known \cite{Ci} that a localization of this model structure is Quillen equivalent to the model structure \cite{DHK} on cyclic sets.

In a recent paper, Angeltveit \cite{Ang} studies Reedy categories
enriched in a monoidal model category, and obtains examples of such
from non-symmetric operads. We expect that a similar enrichment is
possible for our generalized Reedy categories, so that Angeltveit's
construction can be applied to symmetric operads as well. It would
also be of interest to extend the results of Barwick \cite{Ba} to
our context.\vspace{1ex}

To conclude this introduction, we describe the contents of the
different sections of this paper. In Section 1, we present our
notion of generalized Reedy category, state the main theorem on the
existence of a model structure (Theorem \ref{model}), and list some
of the main examples. In Section 2, we explain a general method for
constructing generalized Reedy categories out of classical ones by
means of crossed groups. Sections 3 and 4 contain some technical
preliminaries for the proof of the main theorem which will be given
in Section 5. In Section 6, we give a brief introduction into
skeleta and coskeleta for functor categories of the form $\EE^\RR$.
We then discuss a special class of dualizable generalized Reedy
categories $\RR$ for which the skeleta of set-valued presheaves on
$\RR$ have a simple, explicit description. In Section 7, we obtain a
refinement of the main theorem (Theorem \ref{monoidalmodel}) giving
sufficient conditions on $\RR$ and $\EE$ for the Reedy model
structure on $\EE^{\RR^\op}$ to be monoidal.

\vspace{1ex}

\emph{Acknowledgements:} The results of this paper were presented at the CRM in Barcelona in February 2008, in the context of the Program on Homotopy Theory and Higher Categories. The actual writing of this paper was done while the second author was visiting the University of Nice in May 2008. He would like to express his gratitude to the CRM and the University of Nice for their hospitality and support.\newpage

\section{Generalized Reedy categories.}

Recall that a subcategory $\SS$ of $\RR$ is called \emph{wide} if $\SS$ has the same objects as $\RR$. An example of a wide subcategory of $\RR$ is the \emph{maximal subgroupoid} $\Iso(\RR)$ of $\RR$.

\begin{dfn}\label{basic}A \emph{generalized Reedy structure} on a small category $\RR$ consists of wide subcategories $\RR^+,\RR^-$, and a degree-function $d:\Ob(\RR)\to\NN$ satisfying the following four axioms:\vspace{1ex}

\begin{enumerate}\item[(i)]non-invertible morphisms in $\,\RR^+$ (resp. $\RR^-$) raise (resp. lower) the degree; isomorphisms in $\RR$ preserve the degree;\item[(ii)]$\RR^+\cap\RR^- =\Iso(\RR)$;\item[(iii)]every morphism $f$ of $\,\RR$ factors as $f=gh$ with $g\in\RR^+$ and $h\in\RR^-$, and this factorization is unique up to isomorphism;\item[(iv)] If $\theta f=f$ for  $\,\theta\in\Iso(\RR)$ and $\,f\in\RR^-$, then $\theta$ is an identity.\end{enumerate}A generalized Reedy structure is \emph{dualizable} if in addition the following axiom holds:\begin{enumerate}\item[(iv)$'$]If $f\theta =f$ for  $\,\theta\in\Iso(\RR)$ and $\,f\in\RR^+$, then $\theta$ is an identity.\end{enumerate}

A (dualizable) generalized Reedy category is a small category equipped with a (dualizable) generalized Reedy structure.

A morphism of generalized Reedy categories $\RR\to\SS$ is a functor which takes $\,\RR^+$ (resp. $\RR^-$) to $\,\SS^+$ (resp. $\SS^-$) and which preserves the degree.\end{dfn}

\begin{rmk}The inclusion from left to right in axiom (ii) follows from axiom (i). Axiom (iv) says that automorphisms in $\RR$ consider morphisms of $\RR^-$ as epimorphisms. This last axiom implies that the isomorphism in (iii) is unique. The axioms (i)-(iii) are self-dual while axiom (iv) is dual to axiom (iv)$'$. A generalized Reedy category $\,\RR$ is thus dualizable if and only if $\,\RR^\op$ is also a generalized Reedy category. Most of the examples that we are aware of are dualizable. The asymmetry in the definition is related to the asymmetry of the projective model structure on objects with a group action, which enters in Theorem \ref{model}; cf. the proof of Lemma \ref{RLP2}.\end{rmk}


\begin{rmk}If $\RR$ is a generalized Reedy category, an equivalence of categories $\RR'\eqv\RR$ induces a generalized Reedy structure on $\RR'$. In this sense, the existence of a generalized Reedy structure is invariant under equivalence of categories.\end{rmk}

\begin{rmk}\label{strict}Recall that in the literature (cf. \cite{GJ, Hi, Ho, Ree}) a category $\RR$, equipped with $\RR^+$, $\RR^-$ and $d$ as above, is called a \emph{Reedy category} if it satisfies the following two axioms:\begin{enumerate}\item[(i)]non-identity morphisms in $\RR^+$ (resp. in $\RR^-$) raise (resp. lower) degree;\item[(ii)] every morphism in $\RR$ factors uniquely as a morphism in $\RR^-$ followed by one in $\RR^+$.\end{enumerate}Any such Reedy category is a dualizable generalized Reedy category in our sense. To emphasize the distinction with generalized Reedy categories we will refer to the classical ones as \emph{strict Reedy categories}. The notion of a strict Reedy category is not invariant under equivalence of categories. In fact, one checks that in a strict Reedy category every isomorphism is an identity.\footnote{Indeed, for an isomorphism $f$, let $f=gh$ and $hf^{-1}=g'h'$ be the unique factorizations. Then $id=ghf^{-1}=(gg')h'$, so $h'=id$ and $gg'=id$, whence $g=id$ and $g'=id$ since $g,g'\in\RR^+$. Thus $f=h\in\RR^-$. The same argument applied to $f^{-1}$ shows that $f$ preserves the degree, hence $f=id$.} A generalized Reedy category is equivalent to a strict one if and only if it has no non-trivial automorphisms, and is itself strict if and only if it is moreover skeletal.\end{rmk}

\begin{rmk}As for strict Reedy categories, all the results concerning a fixed generalized Reedy category $\RR$ go through if the degree-function takes values in an arbitrary well-ordered set. (However, with these more general degree-functions, the notion of a morphism of Reedy categories is more subtle to define).\end{rmk}

For a generalized Reedy category $\RR$, we introduce the following
notions, which are classical in the case of a strict Reedy category.
For each object $r$ of $\RR$, the category $\RR^+(r)$  has as
objects the \emph{non-invertible} morphisms in $\RR^+$ with codomain
$r$, and as morphisms from $u:s\to r$ to $u':s'\to r$ all $w:s\to
s'$ such that $u=u'w$ . Observe that axiom (iii) implies that
$w\in\RR^+$; moreover, the automorphism group $\Aut(r)$ acts on the
category $\RR^+(r)$  by composition. For each functor $X:\RR\to\EE$
 and each object $r$ of $\RR$, the \emph{$r$-th latching object}
$L_r(X)$ of $X$ is defined to be$$L_r(X)=\varinjlim_{s\to
r}X_s$$where the colimit is taken over the category $\RR^+(r)$. We
will always assume $\EE$ to be sufficiently cocomplete for this
colimit to exist (in many examples this colimit is finite). Note
that $\Aut(r)$ acts on $L_r(X)$.

Dually, for each object $r$ of $\RR$, the category $\RR^-(r)$  has as objects the \emph{non-invertible} morphisms in $\RR^-$ with domain $r$, and as morphisms from $u:r\to s$ to $u':r\to s'$ all $w:s\to s'$ such that $u'=wu$ . Observe that axiom (iii) implies that $w\in\RR^-$; moreover, the automorphism group $\Aut(r)$ acts on the category $\RR^-(r)$  by precomposition. For each object $X$ of $\EE^\RR$ and each object $r$ of $\RR$, the \emph{$r$-th matching object} $M_r(X)$ of $X$ is defined to be$$M_r(X)=\varprojlim_{r\to s}X_s$$where the limit is taken over the category $\RR^-(r)$. We will always assume $\EE$ to be sufficiently complete for this limit to exist (in many examples this limit is finite). Note that $\Aut(r)$ acts on $M_r(X)$.

Each object $X$ of the functor category $\EE^\RR$ defines for any object $r$ of $\RR$ natural $\Aut(r)$-equivariant maps $L_r(X)\to X_r\to M_r(X)$. For a map $f:X\to Y$ in $\EE^\RR$ these give rise to \emph{relative} latching, resp. matching maps\begin{gather*}X_r\cup_{L_r(X)}L_r(Y)\lrto Y_r,\text{\quad resp.\quad}X_r\lrto M_r(X)\times_{M_r(Y)}Y_r.\end{gather*}

Recall that for any group (or groupoid) $\Gamma$ and any
\emph{cofibrantly generated} model category $\EE$, the category
$\EE^\Gamma$ of objects of $\EE$ with right $\Gamma$-action carries
a projective model structure, in which weak equivalences and
fibrations are defined by forgetting the $\Gamma$-action. In
general, a Quillen model category $\EE$ will be called
\emph{$\RR$-projective}, if for each object $r$ of $\,\RR$, the
category $\EE^{\Aut(r)}$ admits a projective model structure. For
$\RR$-projective model categories $\EE$, we introduce the following
notions:\vspace{1ex}

A map $f:X\to Y$ in $\EE^\RR$ is called a\vspace{1ex}

-- \emph{Reedy cofibration} if for each $r$, the relative latching map $X_r\cup_{L_r(X)}L_r(Y)\to Y_r$ is a cofibration in $\EE^{\Aut(r)}$;

-- \emph{Reedy weak equivalence} if for each $r$, the induced map $f_r:X_r\to Y_r$ is a weak equivalence in $\EE^{\Aut(r)}$;

-- \emph{Reedy fibration} if for each $r$, the relative matching map $X_r\to M_r(X)\times_{M_r(Y)}Y_r$ is a fibration in $\EE^{\Aut(r)}$.\vspace{1ex}

\noindent Observe that the automorphism group $\Aut(r)$ really enters only in the definition of a Reedy cofibration, by definition of the model structure on $\EE^{\Aut(r)}$ just described.

\begin{thm}\label{model}Let $\RR$ be a generalized Reedy category and let $\EE$ be an $\RR$-projective Quillen model category in which the relevant limits and colimits exist (for instance, $\EE$ can be any cofibrantly generated model category). With the above classes of Reedy cofibrations, Reedy weak
equivalences and Reedy fibrations, the functor category $\EE^\RR$ is
a Quillen model category.\end{thm}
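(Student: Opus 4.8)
The plan is to verify Quillen's five axioms for the three classes defined above. Axiom MC1 holds at once, since limits and colimits in $\EE^\RR$ are computed objectwise and exist in $\EE$ by hypothesis. The two-out-of-three axiom and the closure of weak equivalences under retracts are equally immediate, because $f$ is a Reedy weak equivalence precisely when each $f_r$ is a weak equivalence in $\EE^{\Aut(r)}$, a condition detected objectwise. For the remaining part of the retract axiom, if $f$ is a retract of $g$ in the arrow category then at each $r$ the relative latching map of $f$ is a retract of that of $g$, and likewise for the relative matching maps; so closure of Reedy cofibrations and Reedy fibrations under retracts reduces to the corresponding closure in each $\EE^{\Aut(r)}$. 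The substance of the theorem therefore lies in the lifting and factorization axioms, both of which I would establish by induction on the degree, reducing at each object $r$ to the projective model structure on $\EE^{\Aut(r)}$ furnished by $\RR$-projectivity.

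Before attacking these I would establish the two characterizations of the ``trivial'' maps: $f$ is a Reedy cofibration and a weak equivalence if and only if every relative latching map $X_r\cup_{L_r(X)}L_r(Y)\rto Y_r$ is a trivial cofibration in $\EE^{\Aut(r)}$, and dually $f$ is a Reedy fibration and a weak equivalence if and only if every relative matching map $X_r\rto M_r(X)\times_{M_r(Y)}Y_r$ is a trivial fibration in $\EE^{\Aut(r)}$. The crux is a lemma asserting that the latching functor $L_r$ carries a Reedy cofibration to a cofibration $L_r(X)\rto L_r(Y)$ in $\EE^{\Aut(r)}$, and to a trivial one when $f$ is in addition a weak equivalence, with the dual statement for $M_r$ and fibrations. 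Granting this, the characterization follows by induction on degree: the relative latching map being a trivial cofibration forces $f_r$ to be a weak equivalence, by factoring $X_r\rto X_r\cup_{L_r(X)}L_r(Y)\rto Y_r$ and observing that the first map is a cobase change of the trivial cofibration $L_r(X)\rto L_r(Y)$ and the second is the relative latching map itself; the converse is the same computation read backwards through two-out-of-three. Note that trivial cofibrations are closed under cobase change unconditionally, so no left properness of $\EE$ is needed here.

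With these characterizations in hand, the factorization and lifting axioms are built one degree at a time. For a factorization of $f\colon X\rto Y$ I would construct the intermediate object $Z$ inductively: assuming $Z$ defined on objects of degree $<n$, so that $L_r(Z)$ and $M_r(Z)$ are already determined for each $r$ of degree $n$, I factor the canonical map $X_r\cup_{L_r(X)}L_r(Z)\rto M_r(Z)\times_{M_r(Y)}Y_r$ in $\EE^{\Aut(r)}$ as a trivial cofibration followed by a fibration (and dually for the other factorization), taking $Z_r$ to be the intermediate object; by the characterizations this makes $X\rto Z$ a trivial cofibration and $Z\rto Y$ a Reedy fibration. To see that this assembles into a functor on $\RR$, I work one isomorphism class of degree-$n$ objects at a time: the groupoid of degree-$n$ objects is equivalent, after choosing one representative $r$ per class, to the coproduct of the groups $\Aut(r)$, so an $\Aut(r)$-equivariant choice at each representative extends uniquely and coherently over the whole class. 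The lifting axiom is handled identically: given a Reedy (trivial) cofibration on the left and a Reedy (trivial) fibration on the right, the lifting problem at an object $r$ of degree $n$ is exactly a lifting problem in $\EE^{\Aut(r)}$ between a relative latching map, which is a (trivial) cofibration, and a relative matching map, which is a (trivial) fibration, and is therefore solvable there.

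The principal obstacle is the lemma controlling $L_r$ and $M_r$, and it is here that axiom (iv) is genuinely used. On the cofibration side the relative latching maps are cofibrations in $\EE^{\Aut(r)}$ by definition, and one extracts the preservation statement for $L_r$ from the skeletal filtration of the index category $\RR^+(r)$, through which $L_r(X)$ is built from the $X_s$ of lower degree by pushouts indexed over $\Aut(r)$-sets of morphisms, so that $L_r$ preserves (trivial) cofibrations in the projective structure. The matching side is \emph{not} the formal dual of this, precisely because in the projective model structure on $\EE^{\Aut(r)}$ the fibrations and weak equivalences---but not the cofibrations---are detected by forgetting the group action. It is at this asymmetric point that axiom (iv) enters: by making the morphisms of $\RR^-$ epimorphic with respect to isomorphisms, it rigidifies the categories $\RR^-(r)$ enough that the matching cofiltration behaves correctly, which is exactly what makes the relevant right lifting property hold despite the absence of a dual ``injective'' description of the matching maps. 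This is the content recorded in the remark on the asymmetry of the definition and cited there as Lemma~\ref{RLP2}, and I expect it to be the most delicate step of the argument.
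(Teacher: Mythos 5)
Your global scaffold agrees with the paper's: pointwise limits and colimits, the reduction of everything in degree $n$ to $\EE^{\GG_n(\RR)}\simeq\prod_r\EE^{\Aut(r)}$ via chosen representatives, the degreewise factorization obtained by factoring $X_r\cup_{L_r(X)}L_r(Z)\to M_r(Z)\times_{M_r(Y)}Y_r$ as a trivial cofibration followed by a fibration, the degreewise lifting square between a relative latching and a relative matching map, the characterization of the trivial classes by relative latching/matching maps (the paper's Proposition \ref{trivial}), and the correct diagnosis that axiom (iv) is what saves the non-self-dual matching side. The genuine gap is in your ``crux lemma''. On the latching side, your proposed proof --- that $L_r(X)$ is ``built from the $X_s$ of lower degree by pushouts indexed over $\Aut(r)$-sets of morphisms'' --- would fail as stated: since axiom (iv)$'$ is \emph{not} assumed, an object $u:s\to r$ of $\RR^+(r)$ may admit nontrivial automorphisms $w$ with $uw=u$, and in any case the $\Aut(r)$-action on these objects can have nontrivial stabilizers; the colimit defining $L_r(X)$ therefore involves quotients by these isotropy groups rather than free cell attachments, and without (iv)$'$ even the comparison data identifying a stabilizer of $u$ with automorphisms of $s$ is ill-defined (the $w_g$ solving $gu=uw_g$ need not be unique). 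Projective cofibrancy of the filtration stages is thus exactly what is in question, and your sketch assumes it. Note also that your lemma claims more than is needed: the proof only requires that $L_n(f)$ be a \emph{pointwise} trivial cofibration when $f$ is a Reedy cofibration and a weak equivalence below degree $n$.

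The paper never filters $\RR^+(r)$ directly. Its Lemma \ref{LLP2} proves the pointwise statement by transposing the relevant lifting problem along the adjunction furnished by the projection formula $i_n^*L_n\cong(b_n)_!k_n^*d_n^*$, landing in $\EE^{\RR^+(n)}$, and then \emph{recursively} applying the degreewise lifting lemma (Lemma \ref{LLP1}) to the auxiliary generalized Reedy category $\SS=\RR^+(n)$, for which $\SS=\SS^+$, so that Reedy fibrations there are pointwise fibrations; the only equivariant input is that restriction along the faithful groupoid map $\GG_k(\SS)\to\GG_k(\RR)$ preserves projective cofibrations (Lemma \ref{latching} identifies the latching objects). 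Dually, on the matching side --- which you flag as delicate but do not actually prove --- axiom (iv) enters through a precise mechanism: it forces the auxiliary category $\RR^-(n)$ to have no nontrivial automorphisms, hence to be equivalent to a \emph{strict} Reedy category with $\SS=\SS^-$, so Reedy cofibrations there are pointwise cofibrations and the transposed lifting argument via $i_n^*M_n\cong(\be_n)_*\kappa_n^*\gamma_n^*$ goes through (Lemmas \ref{RLP1}, \ref{RLP2} and \ref{matching}). Until you either carry out this mutual recursion or supply a genuinely equivariant filtration (inductions from stabilizer subgroups, with projective cofibrancy over them established by a subsidiary induction), the crux lemma --- and with it both the lifting and the factorization axioms --- remains unestablished.
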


The proof will be supplied in Section 5. Notice that if $\RR=\RR^+$ then the constant functor $\EE\to\EE^\RR$ sends weak equivalences and fibrations in $\EE$ to Reedy weak equivalences and Reedy fibrations in $\EE^\RR$. Thus, we obtain the following corollary which is well known for strict Reedy categories.

\begin{cor}Let $\EE$ and $\RR$ be as in Theorem \ref{model}. If $\,\RR=\RR^+$ then $\varinjlim:\EE^\RR\to\EE$ is a left Quillen functor.\end{cor}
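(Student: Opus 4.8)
The plan is to realize $\varinjlim:\EE^\RR\to\EE$ as the left adjoint of the constant (diagonal) functor $c:\EE\to\EE^\RR$, and then to check that $c$ is right Quillen. Recall that an adjunction is a Quillen adjunction as soon as the right adjoint preserves fibrations and acyclic fibrations, so it suffices to establish this for $c$; and this is exactly what the observation recorded just before the corollary provides. First I would spell out the adjunction: for $E$ in $\EE$, the functor $c(E)$ is the diagram with value $E$ at every object, with identity structure maps and trivial $\Aut(r)$-action at each $r$, and the universal property of the colimit yields the natural bijection $\Hom_{\EE}(\varinjlim X,E)\cong\Hom_{\EE^\RR}(X,c(E))$.

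The key point I would make explicit is why, under the hypothesis $\RR=\RR^+$, the Reedy fibrations and Reedy weak equivalences reduce to objectwise ones. By axiom (ii) we have $\RR^-=\RR^+\cap\RR^-=\Iso(\RR)$, so $\RR^-$ contains no non-invertible morphisms. Hence each matching category $\RR^-(r)$ is empty and the matching object $M_r(X)$ is the terminal object of $\EE$ for every $X$ and every $r$. Consequently the relative matching map of a map $f:X\to Y$ collapses to $f_r:X_r\to Y_r$, so that $f$ is a Reedy fibration (resp. Reedy weak equivalence) precisely when each $f_r$ is a fibration (resp. weak equivalence) in $\EE^{\Aut(r)}$, i.e. an underlying fibration (resp. weak equivalence) in $\EE$. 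Applying this to $c(f)$ for a map $f:E\to E'$ in $\EE$, whose component at each $r$ is $f$ itself with trivial action, shows that $c$ sends fibrations to Reedy fibrations and weak equivalences to Reedy weak equivalences.

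It then follows at once that $c$ preserves acyclic fibrations as well, since these are exactly the maps that are simultaneously fibrations and weak equivalences. Therefore $(\varinjlim,c)$ is a Quillen pair and $\varinjlim:\EE^\RR\to\EE$ is a left Quillen functor. I do not expect any genuine obstacle here: the only substantive step is the identification of the matching objects with the terminal object, after which the Reedy fibrations and weak equivalences become objectwise and the statement is a formal consequence of the adjunction together with the already-established behaviour of the constant functor.
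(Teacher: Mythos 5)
Your proof is correct and follows exactly the paper's route: the paper's entire argument is the remark preceding the corollary, that when $\RR=\RR^+$ the constant functor $\EE\to\EE^\RR$ sends weak equivalences and fibrations to Reedy weak equivalences and Reedy fibrations, whence its left adjoint $\varinjlim$ is left Quillen. You have merely made explicit the detail the paper leaves implicit (namely that $\RR^-=\Iso(\RR)$ forces each matching category $\RR^-(r)$ to be empty and each $M_r(X)$ to be terminal), which is a welcome elaboration rather than a different approach.
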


\begin{exms}\label{basicexamples}We give a list of first examples which, among other things, show that generalized Reedy categories occur naturally in several different contexts in homotopy theory. More examples are provided in Section 2.\vspace{1ex}

(a) For completeness, we mention (again) that any strict Reedy
category is a dualizable generalized Reedy category (cf. Remark
\ref{strict}). This applies in particular to standard examples of
Reedy categories such as the simplex category $\Delta$ and its dual,
as well as to $(\NN,<)$, $\leftarrow\cdot\rightarrow$,
$\cdot\rightrightarrows\cdot$ (relevant for homotopy colimits of
sequences, for homotopy pushouts and for homotopy coequalizers).
Other examples are Joyal's category of finite disks and its dual
$\Theta$ (cf. \cite{Jo, Be}).\vspace{1ex}

(b) Segal's category $\Gamma$ (cf. \cite{Se}) is a dualizable generalized Reedy category. In fact, $\Gamma^\op$ is equivalent to the category $\Fin_*$ of finite pointed sets, and one can take $\Fin_*^+$ to consist of monomorphisms and $\Fin_*^-$ of epimorphisms, while the degree-function is given by cardinality. If $\EE$ is the category of simplicial sets, the Reedy model structure on $\EE^{\Gamma^{\op}}$ given by Theorem \ref{model} was discussed in Bousfield-Friedlander \cite{BF} and referred to as the strict model structure on $\Gamma$-spaces. The simplicial circle $\Delta[1]/\partial\Delta[1]$, when viewed as a functor $\Delta\to\Gamma$ (cf. \cite{Se,Be}), is a morphism of generalized Reedy categories.\vspace{1ex}

(c) The category $\Fin$ of finite sets carries a dualizable
generalized Reedy structure, analogous to the pointed case. A
skeleton of $Fin$ is often denoted by $\Delta_{sym}$, and
$\EE^{\Delta_{sym}^\op}$ is referred to as the category of
\emph{symmetric simplicial objects} in $\EE$, cf. \cite{Ant, Ci}.
The inclusion $\Delta\inc\Delta_{sym}$ is a morphism of generalized
Reedy categories.\vspace{1ex}

(d) Any group(oid) is a generalized Reedy category.\vspace{1ex}

(e) \emph{Orbit categories}. The orbit category $\OO(G)$ of a \emph{finite group} $G$ has the subgroups of $G$ as objects, and the $G$-equivariant maps $G/H\to G/K$ as morphisms. This orbit category is a generalized Reedy category with $\OO(G)=\OO(G)^-$ and $d(H)=\card(G/H)$ (the index of $H$ in $G$). There is also a dual generalized Reedy structure on $\OO(G)$ with $\OO(G)=\OO(G)^+$ and $d(H)=\card(H)$. If $G$ is not finite, the first structure still makes sense for subgroups of finite index, the second one for finite subgroups. The orbit category $\OO(G)$ of a \emph{compact Lie group} $G$ is the category with closed subgroups of $G$ as objects and $G$-homotopy classes of $G$-maps $G/H\to G/K$ as morphisms from $H$ to $K$. This is again a generalized Reedy category with $\OO(G)=\OO(G)^+$. The degree of an object $H$ now takes values in $\NN\times\NN$ with the lexicographical ordering, and is defined by $d(H)=(\dim(H),\card(\pi_0H))$. Notice that this generalized Reedy structure is not in general dualizable like in the case of finite groups, because there may be infinite increasing sequences of closed subgroups, e.g. the subgroups $\ZZ/p^n\ZZ$ of the circle $S^1$.\vspace{1ex}

(f) \emph{Complexes of groups.} Let $X$ be a simplicial complex. Recall that a complex of groups $G$ over $X$ assigns to each simplex $\sg\in X$ a group $G_\sg$, to each inclusion $\sg\subseteq\tau$ an \emph{injective} group homomorphism $\phi_{\sg,\tau}:G_\tau\to G_\sg$, and to each sequence $\rho\subseteq\sg\subseteq\tau$ a specific element $g=g_{\rho,\sg,\tau}\in G_\rho$ such that the triangle\begin{diagram}[small,silent,UglyObsolete]G_\tau&\rTo&G_\sg\\&\rdTo&\dTo\\&&G_\rho\end{diagram}commutes up to conjugation by $g$, i.e. for each $x\in G_\tau$:$$g\phi_{\rho,\tau}(x)g^{-1}=\phi_{\rho,\sg}(\phi_{\sg,\tau}(x)).$$Moreover, for $\pi\subseteq\rho\subseteq\sg\subseteq\tau$, the following coherence condition should be satisfied:$$\phi_{\pi,\rho}(g_{\rho,\sg,\tau})g_{\pi,\rho,\tau}=g_{\pi,\rho,\sg}g_{\pi,\sg,\tau}.$$Such complexes of groups can be used to model orbifold structures on a triangulated space $|X|$, see \cite{Ha,Mo}. To each complex of groups $G$ over $X$ is associated a category $\Delta_X(G)$ whose objects are the simplices $\sg\in X$; if $\sg\subseteq\tau$ then morphisms $y:\sg\to\tau$ in $\Delta_X(G)$ are given by elements $y\in G_\sg$. Composition of $y:\sg\to\tau$ and $x:\rho\to\sg$ is defined to be $\phi_{\rho,\sg}(y)x:\rho\to\sg$. The coherence condition implies that this composition is associative. The category $\Delta_X(G)$ is a generalized Reedy category in which the degree of $\sg$ is the dimension of the simplex, and for which $\Delta_X(G)=\Delta_X(G)^+$. This example is a special case of Corollary \ref{closure2}.\end{exms}

The class of generalized Reedy categories is closed under arbitrary coproducts and under finite products. A more subtle closure property is the following:

\begin{prp}\label{closure1}Let $\,\SS\to\RR$ be a fibered category over $\RR$. Suppose that the base $\,\RR$ and each of the fibers $\,\SS_r$ are equipped with generalized Reedy structures. Assume furthermore that for each morphism $\al:r\to s$ in the base $\RR$,\begin{enumerate}\item[(i)]the base change $\al^*:\SS_s\to\SS_r$ preserves the degree;\item[(ii)]if $\al$ belongs to $\RR^+$ then $\al^*$ takes $\SS_s^+$ to $\,\SS_r^+$;\item[(iii)]if $\al$ belongs to $\RR^-$ then $\al^*$ has a left adjoint $\al_!$ which takes $\SS_r^-$ to $\,\SS_s^-$.\end{enumerate}Then $\SS$ can be equipped with a generalized Reedy structure such that the fiber inclusions $\,\SS_r\inc\SS$ and the projection $\,\SS\to\RR$ preserve the factorization systems.\end{prp}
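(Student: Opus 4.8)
The plan is to build the generalized Reedy structure on $\SS$ out of those on the base and on the fibres, exploiting that $p:\SS\to\RR$ is a Grothendieck fibration. Recall that every morphism $\phi:\xi\to\eta$ of $\SS$ lying over $\al:r\to s$ in $\RR$ has a canonical factorization $\xi\xrightarrow{v}\al^*\eta\to\eta$ into a vertical map $v$ in the fibre $\SS_r$ followed by the cartesian lift of $\al$. When $\al\in\RR^-$, hypothesis (iii) supplies a left adjoint $\al_!\dashv\al^*$, that is, a cocartesian lift of $\al$, and hence a dual factorization $\xi\to\al_!\xi\xrightarrow{\hat v}\eta$ in which $\hat v$ is the vertical map in $\SS_s$ transposing $v$. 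I would then take the degree of $\xi\in\SS_r$ to be the pair $d_\SS(\xi)=(d_\RR(r),d_{\SS_r}(\xi))$ in $\NN\times\NN$, ordered lexicographically with the base-degree first; this is a well-ordered value set, which is permitted. I define $\SS^+$ to consist of the morphisms $\phi$ with $p(\phi)\in\RR^+$ whose vertical part $v$ lies in $\SS_r^+$, and $\SS^-$ to consist of the morphisms $\phi$ with $p(\phi)\in\RR^-$ whose co-vertical part $\hat v$ lies in $\SS_s^-$.

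That $\SS^+$ and $\SS^-$ are wide is clear, and closure under composition is exactly where (ii) and (iii) enter. For $\SS^+$ one checks, using functoriality of cartesian lifts, that the vertical part of a composite of two $\SS^+$-maps (with vertical parts $v$ and $w$, the first over $\al\in\RR^+$) is $(\al^*w)\circ v$, which lies in $\SS_r^+$ since $\al^*w\in\SS_r^+$ by (ii); dually, the co-vertical part of a composite of two $\SS^-$-maps (with co-vertical parts $\hat v$ and $\hat w$, the second over $\be\in\RR^-$) is $\hat w\circ(\be_!\hat v)$, and $\be_!\hat v$ lies in the target fibre's minus part by (iii). Axiom (i) is then immediate from the lexicographic order: a non-invertible map of $\SS^+$ (resp. $\SS^-$) either lies over a non-invertible map of $\RR^\pm$, so the first coordinate already rises (resp. falls), or lies over an isomorphism of $\RR$, in which case the base-degree is unchanged while the relevant fibre functor ($\al^*$ for $\SS^+$, its quasi-inverse $\al_!$ for $\SS^-$) preserves fibre-degree, so the second coordinate rises (resp. falls) by axiom (i) in the fibre. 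Since the isomorphisms of $\SS$ are precisely the maps over isomorphisms of $\RR$ with invertible vertical part, axiom (ii) for $\SS$ reduces over such a base to axiom (ii) in the fibre, using that an isomorphism of $\RR$ induces, by (ii) and (iii), an isomorphism of the fibre Reedy structures.

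The heart of the matter is the factorization axiom (iii), and this is where I expect the real work to lie. Given $\phi:\xi\to\eta$ over $\al:r\to s$, I would first factor $\al=\al^+\al^-$ in $\RR$ through an object $t$, then transpose the vertical part of $\phi$ along $(\al^-)_!\dashv(\al^-)^*$ to a map $(\al^-)_!\xi\to(\al^+)^*\eta$ in the fibre $\SS_t$, and factor this map as $\hat v^+\hat v^-$ using the Reedy structure of $\SS_t$, with $\hat v^-\in\SS_t^-$ and $\hat v^+\in\SS_t^+$. Writing $\omega$ for the intermediate object, the map $\hat v^-$ determines, via the cocartesian lift of $\al^-$, a morphism $\phi^-:\xi\to\omega$ in $\SS^-$, while $\hat v^+$ followed by the cartesian lift of $\al^+$ gives $\phi^+:\omega\to\eta$ in $\SS^+$; a diagram chase with the triangle identities then shows $\phi=\phi^+\phi^-$. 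Uniqueness up to isomorphism would follow by combining the uniqueness (up to iso) of the factorization of $\al$ in $\RR$, the uniqueness of the factorization of $\hat v$ in $\SS_t$, and the adjunction bijections.

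For axiom (iv), suppose $\theta\phi=\phi$ with $\theta\in\Iso(\SS)$ and $\phi\in\SS^-$. On base objects $\bar\theta\al=\al$ with $\al\in\RR^-$ forces $\bar\theta=\mathrm{id}$ by axiom (iv) in $\RR$, so $\theta$ is a vertical automorphism; transposing $\theta\phi=\phi$ gives $\theta\hat v=\hat v$ with $\hat v\in\SS_s^-$, whence $\theta=\mathrm{id}$ by axiom (iv) in the fibre. By construction $p$ carries $\SS^\pm$ into $\RR^\pm$ and the fibre inclusions carry $\SS_r^\pm$ into $\SS^\pm$, so both preserve the factorization systems. The main obstacle I anticipate is the bookkeeping around the two factorizations in (iii): one must verify that transposing along the adjunction and inserting the cartesian and cocartesian lifts is compatible with the base factorization — a Beck--Chevalley-type naturality of base change with respect to these lifts — which is precisely what makes $\phi=\phi^+\phi^-$ hold and $\SS^+,\SS^-$ closed under composition.
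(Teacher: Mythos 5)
Your proposal is correct, and your definitions of $\SS^+$ and $\SS^-$ (base morphism in $\RR^\pm$, with vertical part $x\to\al^*(y)$ in $\SS_r^+$, resp.\ co-vertical part $\al_!(x)\to y$ in $\SS_s^-$) coincide exactly with the paper's; your verifications of closure under composition via hypotheses (ii)--(iii), of the factorization axiom by transposing along $(\al^-)_!\dashv(\al^-)^*$ and factoring in the intermediate fibre, and of axiom (iv) by first killing the base component and then applying axiom (iv) in the fibre, are precisely the ``straightforward verification'' the paper leaves implicit. The one genuine divergence is the degree: the paper sets $d_\SS(x)=d_\RR(r)+d_{\SS_r}(x)\in\NN$, whereas you take the lexicographic pair $(d_\RR(r),d_{\SS_r}(x))$, and this difference is not cosmetic --- your choice is in fact the more robust one. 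The hypotheses control the degree behaviour only of $\al^*$ (hypothesis (i)), not of its left adjoint $\al_!$, and a left adjoint of a degree-preserving functor can raise fibre degrees: take for $\RR$ the single arrow $\al:r\to s$ with $d(r)=1>0=d(s)$ and $\al\in\RR^-$; let $\SS_s$ consist of one object $A$ of degree $5$, and let $\SS_r$ have objects $x$ of degree $0$ and $B=\al^*(A)$ of degree $5$ with a single non-identity map $u:x\to B$ placed in $\SS_r^+$. Then $\al_!(x)=A$, so the map $x\to A$ over $\al$ with co-vertical part $\mathrm{id}_A$ lies in $\SS^-$, is non-invertible, yet has additive degrees $1$ at its domain and $5$ at its codomain, so the paper's additive degree violates axiom (i); your lexicographic degree drops from $(1,0)$ to $(0,5)$ as required, because the base coordinate dominates whenever the base morphism is non-invertible, while over an invertible base morphism the fibre coordinate behaves correctly since then $\al_!\cong(\al^{-1})^*$ preserves degree by (i). (For $\SS^+$ both degrees work, since there hypothesis (i) directly gives $d_{\SS_r}(x)\leq d_{\SS_r}(\al^*y)=d_{\SS_s}(y)$; the asymmetry is exactly that $\al_!$ is uncontrolled.) The price of your variant is that the degree takes values in $\NN\times\NN$ rather than $\NN$, which the paper's remark on well-ordered degree-functions explicitly permits; the paper's additive degree stays within Definition \ref{basic} and suffices in its applications (e.g.\ Corollary \ref{closure2}, where $\RR=\RR^+$ forces $\RR^-$ to consist of isomorphisms only), but for the proposition in full generality your lexicographic degree is the one that actually closes the argument.
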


\begin{proof}Consider a morphism $f:x\to y$ in $\SS$ over $\al:r\to s$ in $\RR$. Say $f\in\SS^+$ if $\al\in\RR^+$ and the unique morphism $x\to\al^*(y)$ in $\SS_r$ determined by a cartesian lift $\al^*(y)\to y$ of $\al$ lies in $\SS^+_r$. Say $f\in\SS^-$ if $\al\in\RR^-$ and the unique morphism $\al_!(x)\to y$ in $\SS_y$ determined by a cocartesian lift $x\to\al_!(x)$ of $\al$ lies in $\SS_s^-$. For $x\in\SS_r$, define the degree by $d_\SS(x)=d_\RR(r)+d_{\SS_r}(x)$. With these definitions, it is straightforward to verify that $\SS$ is a generalized Reedy category.\end{proof}

\begin{cor}\label{closure2}Let $\RR$ be a generalized Reedy category for which $\RR=\RR^+$, and let $\Phi:\RR^\op\to\Cat$ be a diagram of Reedy categories and morphisms of Reedy categories. Then the Grothendieck construction $\SS=\int_{\RR}\Phi$ is again a generalized Reedy category.\end{cor}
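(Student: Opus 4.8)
The plan is to recognize $\SS=\int_\RR\Phi$ as the total category of a Grothendieck fibration over $\RR$ and then to apply Proposition \ref{closure1} directly. Recall that the Grothendieck construction of a functor $\Phi:\RR^\op\to\Cat$ has as objects the pairs $(r,x)$ with $r\in\Ob(\RR)$ and $x\in\Ob(\Phi(r))$, and as morphisms $(r,x)\to(s,y)$ the pairs $(\al,\xi)$ consisting of $\al:r\to s$ in $\RR$ together with a morphism $\xi:x\to\al^*(y)$ in $\Phi(r)$, where $\al^*=\Phi(\al):\Phi(s)\to\Phi(r)$ denotes the base change. Since $\Phi$ is a strict functor, this is a (split) fibration, so that $\SS\to\RR$ is a fibered category whose fiber over $r$ is $\SS_r=\Phi(r)$; the cartesian lift of $\al$ at $y$ is $(\al,\mathrm{id}):(r,\al^*(y))\to(s,y)$, and the fiber component $\xi$ of a general morphism is exactly the ``unique morphism $x\to\al^*(y)$ determined by a cartesian lift'' appearing in the proof of Proposition \ref{closure1}. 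By hypothesis the base $\RR$ carries a generalized Reedy structure, and each fiber $\Phi(r)$ is a strict Reedy category, hence a dualizable generalized Reedy category by Remark \ref{strict}. Thus the standing assumptions of Proposition \ref{closure1} are met, and it remains to check its three numbered conditions.

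Conditions (i) and (ii) are immediate. Since each $\al^*=\Phi(\al)$ is a morphism of Reedy categories, it preserves the degree, which is condition (i), and it carries $\SS_s^+$ to $\SS_r^+$, which is condition (ii)---in fact for every $\al$, not merely those lying in $\RR^+$.

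The only point requiring a separate argument is condition (iii), and this is precisely where the hypothesis $\RR=\RR^+$ does all the work. Indeed, axiom (ii) of Definition \ref{basic} gives $\RR^-\subseteq\RR^+\cap\RR^-=\Iso(\RR)$, so that $\RR^-$ consists exactly of the isomorphisms of $\RR$. For such an isomorphism $\al:r\to s$, functoriality of $\Phi$ shows that $\al^*=\Phi(\al)$ is an isomorphism of categories with inverse $\Phi(\al^{-1})$; setting $\al_!:=\Phi(\al^{-1})$ furnishes a (two-sided) adjoint to $\al^*$, and since $\Phi(\al^{-1})$ is again a morphism of Reedy categories it carries $\SS_r^-$ to $\SS_s^-$. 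Thus condition (iii) holds trivially, the left adjoints $\al_!$ being needed only for isomorphisms, where they always exist.

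With all three hypotheses verified, Proposition \ref{closure1} equips $\SS$ with a generalized Reedy structure, the degree being $d_\SS(r,x)=d_\RR(r)+d_{\Phi(r)}(x)$. I do not expect any genuine difficulty here: the substantive content is already isolated in Proposition \ref{closure1}, and the assumption $\RR=\RR^+$ is tailored exactly to make the one potentially troublesome condition (iii) automatic. The only real care needed is bookkeeping the variance, so that $\int_\RR\Phi$ is read as a fibration (rather than an opfibration) over $\RR$ and so that the cartesian lifts defining $\SS^+$ and $\SS^-$ in Proposition \ref{closure1} are matched with those of the Grothendieck construction; once this identification is made correctly, no further work remains.
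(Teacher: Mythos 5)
Your proof is correct and is exactly the argument the paper intends: the corollary is stated without proof as a direct application of Proposition \ref{closure1}, and your verification---the fibers are strict Reedy, hence dualizable generalized Reedy categories; the base changes $\Phi(\al)$ are morphisms of Reedy categories, giving conditions (i) and (ii); and $\RR=\RR^+$ forces $\RR^-=\RR^+\cap\RR^-=\Iso(\RR)$, so the left adjoints required in (iii) are simply the inverses $\al_!=\Phi(\al^{-1})$, which again preserve the minus parts---fills in precisely the intended details. Your diagnosis that condition (iii) is the sole point where the hypothesis $\RR=\RR^+$ intervenes is also the right one.
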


\section{Crossed groups.}

In this section, we introduce the notion of a \emph{crossed group} $G$ on a category $\RR$, and discuss the construction of the associated \emph{total category} $\RR G$. We will show that for any strict Reedy category $\RR$ and crossed $\RR$-group $G$, the total category $\RR G$ is a generalized Reedy category, which is no longer strict unless $G$ is trivial. Many of our examples of generalized Reedy categories are instances of this construction.

Crossed groups on the simplex category have been studied in the literature under the name \emph{skew-simplicial groups} (see Krasauskas \cite{Kr}), resp. \emph{crossed simplicial groups} (see Fiedorowicz-Loday \cite{FL}). Recently, Batanin-Markl \cite[2.2]{BM} considered \emph{crossed cosimplicial groups} which are crossed groups on the dual of the simplex category. Feigin-Tsygan already spelled out the axioms of a crossed group in \cite[A4.1-4]{FT}. Cisinski considers the more general concept of a \emph{thickening} in \cite[8.5.8]{Ci}.

\begin{dfn}\label{basic2}For any small category $\RR$, a \emph{crossed $\RR$-group} $\,G$ is a set-valued presheaf on $\RR$, together with, for each object $r$ of $\,\RR$,\begin{enumerate}\item[(i)]a \emph{group structure} on $G_r$,\item[(ii)]\emph{left $G_r$-actions} on the hom-sets $\Hom_\RR(s,r)$ with codomain $r$,\end{enumerate}such that the following identities hold for all $g,h\in G_r,\,\al:s\to r,\,\be:t\to s$,\begin{align}g_*(\al\circ\be)&=g_*(\al)\circ\al^*(g)_*(\be),\\g_*(1_r)&=1_r,\\
\al^*(g\cdot h)&=h_*(\al)^*(g)\cdot\al^*(h),\\ \al^*(e_r)&=e_s,\end{align}
where the presheaf action of $\al:s\to r$ is denoted by $\al^*:G_r\to G_s$ and the group action of $g\in G_r$ is denoted by $g_*:\Hom_\RR(s,r)\to\Hom_\RR(s,r)$. Moreover, for each object $r$, the identity of $\,r$ (resp. neutral element of $G_r$) is denoted by $\,1_r$ (resp. $e_r$).\end{dfn}

\begin{rmk}In what follows we shall make no difference in notation between composition in $\RR$ and composition in $G_r$, especially since both structures will agree in the total category $\RR G$. In addition to the four identities spelled out in Definition \ref{basic2}, the following four identities also hold in any $\RR$-crossed group $G$ (by the axioms for a presheaf, resp. group action):\begin{align}(\al\be)^*(g)&=\be^*\al^*(g),\\1_r^*(g)&=g,\\(gh)_*(\al)&=g_*h_*(\al),\\(e_r)_*(\al)&=\al.\end{align}
\end{rmk}

\subsection{The total category}For any small category $\RR$ and crossed $\RR$-group $G$, the total category $\RR G$ is the category with the same objects as $\RR$, and with morphisms $r\to s$ the pairs $(\al,g)$ where $\al:r\to s$ belongs to $\RR$, and $g\in G_r$. Composition of $(\al,g):s\to t$ and $(\be,h):r\to s$ is defined as
$$(\al,g)\circ(\be,h)=(\al\cdot g_*(\be),\be^*(g)\cdot h).$$
One easily checks that this composition is associative and has a two-sided unit $(1_r,e_r)$ for each object $r$ of $\RR G$.

\begin{rmk}In the special case where $G$ is a constant presheaf (i.e. $G=G_r$ for a fixed group $G$ and $\al^*(g)=g$ for all $g$ and all $\al$), the total category $\RR G$ reduces to the familiar Grothendieck construction for a diagram of categories on $G$.

In the special case where the left action of $G$ on $\RR$ is trivial (i.e. $g_*(\al)=\al$ for all $g$ and all $\al$), the crossed group is actually a presheaf of groups, and the total category $\RR G$ again reduces to a Grothendieck construction, this time for a diagram of groups on $\RR^\op$.\end{rmk}

Returning to the general case of a crossed $\RR$-group $G$, notice that we always have a canonical embedding $\RR\inc \RR G$ which sends $\al:r\to s$ to $(\al,e_r):r\to s$, and identifies $\RR$ with a wide subcategory of $\RR G$. Elements $g\in G_r$ of the crossed group may be identified with \emph{special automorphisms} $(1_r,g)$ in the total category $\RR G$, and every morphism $(\al,g)$ in $\RR G$ factors uniquely as a special automorphism $(1_r,g)$ followed by a morphism $(\al,e_r)$ in $\RR$. This \emph{unique factorization property} is characteristic for total categories of crossed groups as asserted by:

\begin{prp}\label{crossed1}Let $\RR\subseteq\SS$ be a wide subcategory and assume that there exist subgroups $G_s\subseteq\Aut_\SS(s)$ of special automorphisms such that each morphism in $\,\SS$ factors uniquely as a special automorphism followed by a morphism in $\,\RR$. Then the groups $G_s$ define a crossed $\,\RR$-group, and $\,\SS$ is isomorphic to $\RR G$ (under $\RR$).\end{prp}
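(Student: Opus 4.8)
The plan is to read the crossed-group structure off the unique factorization and then build an explicit isomorphism $\RR G\cong\SS$. The whole argument rests on one observation: the presheaf action $\al^*$ and the group action $g_*$ are both encoded in a single factorization. Concretely, for $g\in G_r$ and $\al:s\to r$ in $\RR$, the composite $g\al$ is a morphism $s\to r$ of $\SS$, which by hypothesis factors uniquely as a special automorphism of $s$ followed by a morphism of $\RR$; I define $\al^*(g)\in G_s$ to be that special automorphism and $g_*(\al)\in\Hom_\RR(s,r)$ to be that $\RR$-morphism, so that $g\al=g_*(\al)\,\al^*(g)$. This simultaneously defines $\al^*:G_r\to G_s$ and $g_*:\Hom_\RR(s,r)\to\Hom_\RR(s,r)$ with the variance demanded by Definition \ref{basic2}.

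First I would dispose of the unit axioms. Factoring $g\cdot 1_r=g$, where $g$ is itself a special automorphism with trivial $\RR$-part, gives $g_*(1_r)=1_r$ and $1_r^*(g)=g$; factoring $e_r\cdot\al=\al$, where $\al$ already lies in $\RR$, gives $(e_r)_*(\al)=\al$ and $\al^*(e_r)=e_s$. These are exactly the unit identities, and in particular $1_r^*=\mathrm{id}$.

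The two composition identities are the crux, and both follow by factoring a triple composite in two stages and invoking uniqueness. For $g\in G_r$ and composable $\al,\be$ in $\RR$, I rewrite $g(\al\be)=g_*(\al)\,\al^*(g)\,\be$ and then factor $\al^*(g)\,\be=\al^*(g)_*(\be)\,\be^*(\al^*(g))$; since $g_*(\al)\,\al^*(g)_*(\be)$ lies in $\RR$ and $\be^*(\al^*(g))$ is a special automorphism, uniqueness of the factorization of $g(\al\be)$ forces $g_*(\al\be)=g_*(\al)\,\al^*(g)_*(\be)$ and $(\al\be)^*(g)=\be^*\al^*(g)$. Symmetrically, for $g,h\in G_r$ I rewrite $(gh)\al=g\,h_*(\al)\,\al^*(h)$ and factor $g\,h_*(\al)=g_*(h_*(\al))\,h_*(\al)^*(g)$; uniqueness then yields $(gh)_*(\al)=g_*h_*(\al)$ and $\al^*(gh)=h_*(\al)^*(g)\,\al^*(h)$. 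Together with the unit identities this shows that $\al^*$ is functorial and $g_*$ is a group action, and verifies axioms (1)--(4); hence the $G_s$ form a crossed $\RR$-group.

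It then remains to exhibit the isomorphism. I would define $F:\RR G\to\SS$ to be the identity on objects and to send $(\al,g):r\to s$ to the composite $\al g$ in $\SS$. The defining factorization $g\be=g_*(\be)\,\be^*(g)$ is precisely what is needed to check that $F$ respects the composition law $(\al,g)\circ(\be,h)=(\al\, g_*(\be),\,\be^*(g)\, h)$ of $\RR G$, and $F(1_r,e_r)=1_r$, so $F$ is a functor restricting to the given inclusion $\RR\inc\SS$. Finally $F$ is bijective on morphisms because every morphism of $\SS$ has a unique factorization $\al g$ with $\al\in\RR$ and $g\in G_r$: surjectivity is existence of the factorization and injectivity is its uniqueness. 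Thus $F$ is an isomorphism of categories under $\RR$. I do not expect a genuine obstacle here; the one thing to watch is reading ``special automorphism followed by a morphism of $\RR$'' consistently so that the $\RR$-factor always sits on the outside, which is what makes the two actions carry the correct variance.
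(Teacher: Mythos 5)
Your proposal is correct and takes essentially the same route as the paper: the paper likewise defines $\al^*(g)$ and $g_*(\al)$ by uniquely factoring the composite $g\al$ as a special automorphism followed by a morphism of $\RR$, and then declares the verification of the identities of Definition \ref{basic2} and of the isomorphism $\SS\cong\RR G$ to be a lengthy but straightforward check. You have simply carried out that check explicitly, deriving all eight identities by factoring $g\cdot 1_r$, $e_r\cdot\al$, $g(\al\be)$ and $(gh)\al$ and invoking uniqueness, which is exactly the verification the paper leaves to the reader.
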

\begin{proof}For any morphism $\al:r\to s$ of $\RR$ and special automorphism $g\in G_s$, the presheaf action of $\RR$ as well as the group action of $G$ are defined by factoring the composite $g\alpha:r\to s$ uniquely as in the hypothesis of the proposition, as \begin{diagram}[small]r&\rTo^\al&s\\\dTo^{\al^*(g)}&&\dTo_g\\r&\rTo_{g_*(\al)}&s.\end{diagram}With this explicit description, the proof of the identities of Definition \ref{basic2} and of the isomorphism $\SS\cong\RR G$ is a matter of (lengthy but) straightforward verification.\end{proof}

\begin{rmk}Fiedorowicz-Loday \cite{FL} take Proposition \ref{crossed1} for $\RR=\Delta$ as the definition of a crossed simplicial group (with contravariant instead of covariant group action), and state Definition \ref{basic2} of a crossed $\Delta$-group as a proposition.\end{rmk}

\begin{exm}\label{cyclic}The most prominent example of a crossed group is the \emph{simplicial circle} $C=\Delta[1]/\partial\Delta[1]$ whose total category $\Delta C$ is isomorphic to the \emph{cyclic category} $\Lambda$ of  Connes \cite{Co}. It is convenient to embed $C$ in a larger crossed $\Delta$-group $\Sg$, formed by the permutation groups $\Sg_{[n]}$ of the sets $[n]=\{0,1,\dots,n\}$. The crossed $\Delta$-group structure of $\Sg$ is defined as follows: given $\al:[m]\to[n]$ in $\Delta$ and $g:[n]\to[n]$ in $\Sg_{[n]}$, the map $\al^*(g):[m]\to[m]$ is the unique permutation which is \emph{order-preserving on the fibers} of $\al$, and for which $g_*(\al)=g\circ\al\circ\al^*(g)^{-1}:[m]\to[n]$ is order-preserving:\begin{diagram}[small][m]&\rTo^\al&[n]\\\dTo^{\al^*(g)}&&\dTo_g\\[m]&\rTo_{g_*(\al)}&[n].\end{diagram}Let $C_{[n]}\subset\Sg_{[n]}$ be the subgroup generated by the cycle $0\mapsto 1\mapsto\cdots\mapsto n\mapsto 0$. One checks that if $g\in C_{[n]}$ then $\al^*(g)\in C_{[m]}$ for each $\al:[m]\to[n]$ in $\Delta$, so that $C$ inherits a crossed $\Delta$-group structure. The total category $\Delta C$ is then isomorphic to the cyclic category $\Lambda$ of Connes \cite{Co}, and embeds in the total category $\Delta\Sg$. The latter has been described in detail by Feigin-Tsygan \cite[A10]{FT} and plays an important role in the general classification of crossed $\Delta$-groups, see \cite{FL,Kr}.\end{exm}


\begin{exm}\label{omega}One of the examples of a generalized Reedy category which motivated this paper is the category $\Omega$ of trees introduced by Moerdijk-Weiss in \cite{MW}. The objects of this category are finite trees with a distinguished output edge and a set of distinguished input edges, as common in the context of operads. Any such tree $T$ freely generates a symmetric coloured operad $\Omega(T)$ whose colour-set is the set $E(T)$ of edges of $T$; the morphisms $T\to T'$ in $\Omega$ are the maps of symmetric coloured operads $\Omega(T)\to\Omega(T')$.  For a more precise description, we refer to \cite{MW}. Here, it is enough to observe that any such morphism  $T\to T'$ induces a map $E(T)\to E(T')$ in a functorial way, and that this induced map completely determines the morphism. The category $\Omega$ carries a natural dualizable generalized Reedy structure, for which the degree is given by the number of vertices in the tree, while a morphism belongs to $\Omega^+$ (resp. $\Omega^-$) when it induces an injection (resp. surjection) between the sets of edges.

For such a tree $T$, one can consider the set of planar structures $p$ on $T$. Since every tree in $\Omega$ carries at least one planar structure, the category $\Omega$ is equivalent to the category $\Omega'$ whose objects are planar trees $(T,p)$, and whose morphisms $(T,p)\to(T',p')$ are the morphisms $T\to T'$ in $\Omega$. For every such morphism, one can pull back the planar structure $p'$ on $T'$ to one on $T$, and call the morphism planar if this pulled back structure coincides with $p$. The planar morphisms form a wide subcategory of $\Omega'$, denoted $\Omega_{planar}$; in this latter category, every automorphism is trivial, and  $\Omega_{planar}$ is equivalent to a strict Reedy category. Every morphism in  $\Omega'$ factors uniquely as an automorphism followed by a planar map. This shows by Proposition \ref{crossed1} that the category $\Omega$ is equivalent to the total category of a crossed group on $\Omega_{planar}$.

The embedding $i:\Delta\inc\Omega$ (cf. \cite{MW}) is a morphism of
generalized Reedy categories, and Theorem \ref{model} gives a Reedy
model structure on \emph{dendroidal spaces}, which is compatible
with the Reedy model structure on simplicial spaces. At the end of
Section 7 (cf. Exampe \ref{monoidalEil}(iii)), we will show that the
model structure on dendroidal spaces is \emph{monoidal} (in the
sense of Hovey \cite{Ho}) with respect to the \emph{Boardman-Vogt
tensor product} on dendroidal spaces (cf.
\cite[appendix]{MW}).\end{exm}

Consider a crossed $\RR$-group $G$, and suppose that $\RR$ carries a generalized Reedy structure. We will say that the crossed $\RR$-group is \emph{compatible with the generalized Reedy structure} if the following two conditions hold:\vspace{1ex}

\begin{enumerate}\item[(i)]the $G$-action respects $\RR^+$ and $\RR^-$ (i.e. if $\al:r\to s$ belongs to $\RR^\pm$ and $g\in G_s$ then $g_*(\al):r\to s$ belongs to $\RR^\pm$);\item[(ii)]if $\al:r\to s$ belongs to $\RR^-$ and $g\in G_s$ is such that $\al^*(g)=e_r$ and $g_*(\al)=\al$, then $g=e_s$.\end{enumerate}\vspace{1ex}

\begin{rmk}Observe that condition (i) is in particular satisfied if any morphism in $\RR$, which in $\RR G$ is isomorphic to a morphism in $\RR^\pm$, already belongs to $\RR^\pm$. Condition (ii) is equivalent to the condition that $\RR^-$ fulfills axiom (iv) of Definition \ref{basic} with respect to special automorphisms of $\RR G$, cf. the proof of Proposition \ref{crossed1}.

Because in the simplex category $\Delta$ the morphisms of
$\,\Delta^+$ (resp. of $\,\Delta^-$) are the monomorphisms (resp.
split epimorphisms) of $\Delta$, \emph{any} crossed $\Delta$-group
is compatible with the Reedy structure of $\Delta$. The same
property holds for crossed groups on $\Omega_{planar}$, cf. Example
\ref{omega}, and in general for crossed groups on strict
EZ-categories, cf. Definition \ref{Eilenberg}.\end{rmk}

\begin{prp}\label{crossed2}Let $\RR$ be a strict Reedy category, and let $G$ be a compatible crossed $\,\RR$-group. Then there is a unique dualizable generalized Reedy structure on $\RR G$ for which the embedding $\RR\inc\RR G$ is a morphism of generalized Reedy categories.\end{prp}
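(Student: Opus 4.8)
The plan is to transport the strict Reedy structure of $\RR$ along the embedding $\RR\inc\RR G$. I would declare a morphism $(\al,g)$ of $\RR G$ to lie in $(\RR G)^+$ (resp.\ $(\RR G)^-$) exactly when its $\RR$-component $\al$ lies in $\RR^+$ (resp.\ $\RR^-$), and I would keep the degree function unchanged, $d_{\RR G}=d_\RR$, which is legitimate since $\RR G$ and $\RR$ have the same objects. Using compatibility condition (i) together with the composition law $(\al,g)\circ(\be,h)=(\al\cdot g_*(\be),\be^*(g)\cdot h)$ and the presheaf/group-action identities (1)--(8), I would first check that $(\RR G)^+$ and $(\RR G)^-$ are wide subcategories and that the embedding manifestly carries $\RR^\pm$ into $(\RR G)^\pm$ and preserves the degree, so that it is a morphism of generalized Reedy categories by construction.

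The central preliminary step is to pin down $\Iso(\RR G)$. I would show that $(\al,g)$ is invertible in $\RR G$ if and only if $\al$ is invertible in $\RR$ — hence, since $\RR$ is strict, if and only if $\al$ is an identity — so that $\Iso(\RR G)$ is precisely the set of special automorphisms $(1_r,g)$. The forward implication comes from factoring a putative inverse uniquely as a special automorphism followed by an $\RR$-morphism (the unique factorization property of Proposition \ref{crossed1}) and reading off the $\RR$-components. With this identification, axiom (i) reduces to the strict Reedy statement that non-identity morphisms of $\RR^\pm$ strictly change the degree, and axiom (ii) reduces to $\RR^+\cap\RR^-=\Iso(\RR)$.

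For axiom (iii) I would take the strict factorization $\al=\al^+\al^-$ in $\RR$ (with $\al^-:r\to t$ in $\RR^-$ and $\al^+:t\to s$ in $\RR^+$) and combine it with $g$: a short computation gives $(\al,g)=(\al^+,e_t)\circ(\al^-,g)$ with $(\al^+,e_t)\in(\RR G)^+$ and $(\al^-,g)\in(\RR G)^-$. Uniqueness up to isomorphism is the step I expect to require the most care. Given a second factorization $(\al,g)=(\be^+,b^+)\circ(\be^-,b^-)$, the composition law exhibits $\al=\be^+\cdot(b^+)_*(\be^-)$ as an $\RR^+$-after-$\RR^-$ factorization (using compatibility (i) to see $(b^+)_*(\be^-)\in\RR^-$), so strict uniqueness in $\RR$ forces $\be^+=\al^+$ and $(b^+)_*(\be^-)=\al^-$; I would then verify that the special automorphism $(1_t,b^+)$ mediates between the two factorizations, which is exactly ``unique up to isomorphism.'' Axiom (iv) is where compatibility condition (ii) enters: writing $\theta=(1_s,\phi)$, the equation $\theta\circ(\al,g)=(\al,g)$ with $(\al,g)\in(\RR G)^-$ unwinds to $\phi_*(\al)=\al$ and $\al^*(\phi)=e_r$, whence (ii) gives $\phi=e_s$. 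Pleasantly, axiom (iv)$'$ is automatic: unwinding $(\al,g)\circ(1_r,\psi)=(\al,g)$ yields $g\psi=g$ directly, so no dual hypothesis is needed and the structure is dualizable for free.

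Finally, for uniqueness of the structure I would argue that the morphism-of-Reedy-categories condition forces $d_{\RR G}=d_\RR$ and, via axiom (ii) (isomorphisms lie in both $(\RR G)^\pm$), forces $\al\in\RR^+\Rightarrow(\al,g)\in(\RR G)^+$ for all $g$, and dually. The converse inclusions follow from essential uniqueness of factorizations: if $(\al,g)\in(\RR G)^+$, comparing its trivial factorization with the one coming from $\al=\al^+\al^-$ forces the $(\RR G)^-$-part $(\al^-,g)$ to be an isomorphism, so $\al^-$ is an identity and $\al=\al^+\in\RR^+$; symmetrically for $(\RR G)^-$. I expect the two genuinely delicate points to be the clean identification of $\Iso(\RR G)$ and the bookkeeping in the up-to-isomorphism uniqueness in (iii), both of which hinge on the unique factorization property of the total category established in Proposition \ref{crossed1}.
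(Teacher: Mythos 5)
Your proposal is correct and follows essentially the same route as the paper's (terse) proof: you define $(\RR G)^\pm$ by the $\RR$-component (which the paper likewise notes is forced), use compatibility (i) for closure under composition, use strictness of $\RR$ to identify $\Iso(\RR G)$ with the special automorphisms so that axiom (iv) reduces to compatibility (ii), and observe that axiom (iv)$'$ holds automatically. The only difference is that you write out explicitly the factorization and uniqueness computations that the paper dismisses as ``straightforward to verify,'' and these check out.
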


\begin{proof}Necessarily, $(\RR G)^\pm$ consists of those morphisms $(\al,g)$ for which $\al\in\RR^\pm$. Because of compatibility condition (i), $(\RR G)^\pm$ is closed under composition. It is now straightforward to verify that this defines a generalized Reedy structure on $\RR G$. In particular, axiom (iv) follows from compatibility condition (ii) and the fact that all automorphisms of $\RR G$ are special since $\RR$ is a strict Reedy category. The dual axiom (iv)$'$ holds automatically.\end{proof}

\section{Kan extensions and the projection formula.}

In this section we recall some basic facts about Kan extensions for diagram categories. Let $\phi:\DD\lrto\CC$ be a functor between small categories, and write $\phi^*:\EE^\CC\lrto\EE^\DD$ for precomposition with $\phi$. The left and right adjoints of $\phi^*$ are usually called \emph{left} and \emph{right Kan extension} along $\phi$.

If $\EE$ is sufficiently cocomplete, the left Kan extension $\phi_!:\EE^\DD\lrto\EE^\CC$ can be computed pointwise by$$\phi_!(X)_c=\varinjlim_{\phi/c}X\circ\pi_c$$where $\phi/c$ is the comma category with objects $(d,\,u:\phi(d)\to c)$ and morphisms $(d,u)\to(d',u')$ given by $f:d\to d'$ in $\DD$ such that $u'\circ\phi(d)=u$. The functor $\pi_c:\phi/c\lrto\DD$ is defined by $(d,u)\mapsto d$. We will often informally write$$\phi_!(X)_c=\varinjlim_{\phi(d)\to c}X_d.$$
The formula for left Kan extension simplifies if the functor $\phi:\DD\to\CC$ is cofibered. Recall (cf. \cite{Bo}) that for a given functor $\phi:\DD\to\CC$, a morphism $f:d\to d'$ in $\DD$ is \emph{cocartesian} if for any $g:d\to d''$ such that $\phi(g)=h\phi(f)$, there is a unique $k:d'\to d''$ such that $g=kf$ and $\phi(k)=h$. The functor $\phi$ is called \emph{cofibered}, if morphisms in $\CC$ have cocartesian lifts in $\DD$, and if moreover cocartesian morphisms in $\DD$ are closed under composition. If $\phi$ is cofibered, then for any object $c$ of $\,\CC$, the embedding of the fiber $\phi^{-1}(c)$ into the comma category $\phi/c$ (given on objects by $d\mapsto (d,1_{\phi(c)})$) has a left adjoint, so $\phi^{-1}(c)$ is cofinal in $\phi/c$, and hence$$\phi_!(X)_c=\varinjlim_{\phi^{-1}(c)}X$$is the colimit over the fiber. This implies that for any pullback diagram of categories
\begin{diagram}[small]\DD'&\rTo^\be&\DD\\\dTo^{\psi}&&\dTo_{\phi}\\\CC'&\rTo^\al&\CC\end{diagram}with $\phi$ (and hence $\psi$) cofibered,  the natural transformation of functors $$\psi_!\be^*\rto\al^*\phi_!$$ is an \emph{isomorphism}. This is often called the \emph{projection formula}, and will be applied below in the special case where $\DD=\int_\CC F$ is the Grothendieck construction of a covariant diagram $F:\CC\to\Cat$.\vspace{1ex}

Dually, if $\EE$ is sufficiently complete, the right Kan extension $\phi_*:\EE^\DD\to\EE^\CC$ can be computed pointwise by$$\phi_*(X)_c=\varprojlim_{c\to\phi(d)}X_d$$and this formula simplifies for fibered functors $\phi:\DD\to\CC$. Recall that a functor $\phi$ is called \emph{fibered}, if morphisms in $\CC$ have \emph{cartesian} lifts in $\DD$, and if moreover cartesian morphisms in $\DD$ are closed under composition. If $\phi$ is fibered, then for any object $c$ of $\,\CC$, the embedding of the fiber $\phi^{-1}(c)$ into the comma category $c/\phi$ (given on objects by $d\mapsto (1_{\phi(c)},d)$) has a right adjoint, so $\phi^{-1}(c)$ is final in $c/\phi$, and hence$$\phi_*(X)_c=\varprojlim_{\phi^{-1}(c)}X$$is the limit over the fiber.
This implies that for any pullback diagram of categories\begin{diagram}[small]\DD'&\rTo^\be&\DD\\\dTo^{\psi}&&\dTo_{\phi}\\\CC'&\rTo^\al&\CC\end{diagram}with $\phi$ (and hence $\psi$) \emph{fibered}, the natural transformation of functors $$\al^*\phi_*\rto\psi_*\be^*$$ is an \emph{isomorphism}. This \emph{dual projection formula} will be applied below in the special case where $\DD=\int_{\CC} F$ is the Grothendieck construction of a contravariant diagram $F:\CC^\op\to\Cat$.\vspace{1ex}

\section{Latching and matching objects.}

In this section we give an alternative, more global definition of latching and matching objects. Throughout, we consider a fixed generalized Reedy category  $\RR$ with wide subcategories $\RR^\pm$ and degree-function $d$ as in Definition \ref{basic}, and assume that $\EE$ is a sufficiently bicomplete category.

\subsection{The groupoids of objects of fixed degree.}\label{groupoiddef}For each natural number $n$, the full \emph{subcategory} of $\RR$ of objects of degree $\leq n$ will be denoted $\RR_{\leq n}$, the full \emph{subgroupoid} of $\Iso(\RR)$ spanned by the objects of degree $n$ will be denoted $\GG_n(\RR)$, and the \emph{discrete} category of objects of $\RR$ of degree $n$ will be denoted $\RR_n$.

\subsection{Overcategories.}For each natural number $n$, the category $\RR^+((n))$ has as objects the non-invertible morphisms $u:s\to r$ in $\RR^+$ such that $d(r)=n$, and as morphisms from $u$ to $u'$ the commutative squares\begin{diagram}[small]s&\rTo^f&s'\\\dTo^u&&\dTo_{u'}\\r&\rTo^g&r'\end{diagram}such that $f\in\RR^+$ and $g\in\GG_n(\RR)$.

The wide subcategory $\RR^+(n)$ of $\RR^+((n))$ contains those morphisms for which $g$ is an identity. The category $\RR^+(r)$ of Section 1 may thus be identified with the full subcategory of $\RR^+(n)$ spanned by the objects with codomain $r$. Notice that $$\RR^+(n)=\coprod_{d(r)=n}\RR^+(r).$$

The categories introduced so far assemble into the following commutative diagram:\begin{diagram}[small]\RR&\lTo^{d_n}&\RR^+((n))&\rTo^{c_n}&\GG_n(\RR)&\rTo^{j_n}&\RR\\&&\uTo^{k_n}&&\uTo_{i_n}&&\\&&\RR^+(n)&\rTo^{b_n}&\RR_n\end{diagram}where $d_n$ denotes the domain-functor, $b_n$ and $c_n$ denote codomain-functors, and $i_n$, $j_n$ and $k_n$ are inclusion-functors. Note that $c_n$ is cofibered, i.e.$$\RR^+((n))\cong\int_{\GG_n(\RR)}\RR^+(-),$$and that the square is a pullback. In particular, the projection formula yields $$i_n^*(c_n)_!\cong(b_n)_!k_n^*.$$

\subsection{Latching objects.}\label{latchingdef}The definition of the \emph{latching object} $L_n(X)$ for an object $X$ of $\EE^\RR$ now takes the following form:\begin{align*}L_n(X)&=(c_n)_!d_n^*(X)\in\EE^{\GG_n(\RR)}.\end{align*}We write $X_n=j_n^*(X)=X_{|\GG_n(\RR)}$, so that we get in each degree $n$ a \emph{latching map}\begin{align*}L_n(X)&\lrto X_n.\end{align*} Note that, since $c_n$ is cofibered, we have more concretely:\begin{align*}L_n(X)_r&=\varinjlim_{s\to r}X_s,\end{align*}where the colimit is taken over the category $\RR^+(r)$ as in Section 1. Accordingly, we will often simplify notation and write $L_r(X)$ for $L_n(X)_r$.\vspace{1ex}

Observe that a morphism $\phi:\SS\to\RR$ of generalized Reedy categories induces for $k\in\NN$ and $X\in\EE^\RR$ a natural map$$L_k(\phi^*(X))\rto\phi_k^*(L_k(X))$$where $\phi^*:\EE^\RR\to\EE^\SS$ and $\phi_k^*:\EE^{\GG_k(\RR)}\to\EE^{\GG_k(\SS)}$ are induced by $\phi$.

\begin{lma}\label{latching}Let $\phi:\SS\to\RR$ be a morphism of generalized Reedy categories. Suppose that the induced square\begin{diagram}[small]\SS^+((k))&\rTo&\GG_k(\SS)\\\dTo^{\phi^+_k}&&\dTo_{\phi_k}\\\RR^+((k))&\rTo&\GG_k(\RR)\end{diagram}is a pullback. Then, for each object $X$ of $\,\EE^\RR$, the natural comparison map of latching objects $L_k(\phi^*(X))\to\phi_k^*(L_k(X))$ is an isomorphism.\vspace{1ex}

The pullback hypothesis holds in particular in the following two cases:\begin{enumerate}\item[(i)]$\,\SS=\RR^+(n)$ and $\phi=d_nk_n:\RR^+(n)\to\RR$ is the domain functor;\item[(ii)]$\,\SS=\RR_{\leq n}$ and $\phi:\RR_{\leq n}\to\RR$ is the canonical embedding.\end{enumerate}\end{lma}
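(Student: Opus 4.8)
The plan is to exhibit the stated comparison map as the Beck--Chevalley mate attached to the codomain square and then to apply the projection formula of Section~3. First I would record that the domain squares always commute strictly, independently of any pullback hypothesis: for an object $u:s\to r$ of $\SS^+((k))$ one has $\phi(d_k^\SS(u))=\phi(s)=d_k^\RR(\phi^+_k(u))$, where $\phi^+_k(u)=\phi(u)$ lies in $\RR^+$ and is non-invertible because a non-invertible morphism of $\SS^+$ raises degree and $\phi$ preserves degree; the same holds on morphisms. Hence $(d_k^\SS)^*\phi^*=(\phi^+_k)^*(d_k^\RR)^*$, so that
$$L_k(\phi^*X)=(c_k^\SS)_!(d_k^\SS)^*\phi^*X=(c_k^\SS)_!(\phi^+_k)^*(d_k^\RR)^*X,$$
while $\phi_k^*L_k(X)=\phi_k^*(c_k^\RR)_!(d_k^\RR)^*X$, and the comparison map is precisely the mate $(c_k^\SS)_!(\phi^+_k)^*\to\phi_k^*(c_k^\RR)_!$ evaluated at $(d_k^\RR)^*X$.

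Now I would read the codomain square with its cofibered functors drawn vertically (the transpose of a pullback is again a pullback):
\begin{diagram}[small]
\SS^+((k))&\rTo^{\phi^+_k}&\RR^+((k))\\
\dTo^{c_k^\SS}&&\dTo_{c_k^\RR}\\
\GG_k(\SS)&\rTo^{\phi_k}&\GG_k(\RR)
\end{diagram}
The codomain functor $c_k^\RR$ is cofibered (as already observed for $c_n$), hence its pullback $c_k^\SS$ is cofibered as well, and the pullback hypothesis is exactly what the projection formula requires to conclude that the mate $(c_k^\SS)_!(\phi^+_k)^*\to\phi_k^*(c_k^\RR)_!$ is an isomorphism. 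Feeding in $(d_k^\RR)^*X$ then yields $L_k(\phi^*X)\cong\phi_k^*L_k(X)$, proving the first assertion.

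It remains to verify the pullback hypothesis in the two stated cases. Case (ii) is immediate: for $k\leq n$ the full, degree-preserving embedding $\RR_{\leq n}\inc\RR$ induces identities $\GG_k(\RR_{\leq n})=\GG_k(\RR)$ and $(\RR_{\leq n})^+((k))=\RR^+((k))$ (a non-invertible $u\in\RR^+$ with codomain of degree $k\leq n$ has domain of degree $<n$, so its entire defining square stays inside $\RR_{\leq n}$), whereas for $k>n$ both upper corners are empty; either way the square is trivially cartesian. Case (i) is the real work, and where I expect the main obstacle to lie. Here $\SS=\RR^+(n)=\coprod_{d(r)=n}\RR^+(r)$ must first be given its generalized Reedy structure, namely $\SS=\SS^+$ (every morphism of $\RR^+(r)$ lies in $\RR^+$ by axiom (iii)), $\SS^-=\Iso(\SS)$, and $d_\SS(u:s\to r)=d(s)$ — precisely the structure making the domain functor $\phi=d_nk_n$ a morphism of generalized Reedy categories. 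Unwinding definitions, an object of $\SS^+((k))$ is a non-invertible morphism of some $\RR^+(r)$, i.e. a composable pair $t\xrightarrow{v}s\xrightarrow{u}r$ in $\RR^+$ with $v$ a non-isomorphism, $d(s)=k$ and $d(r)=n$; this is exactly the datum of an object of $\RR^+((k))\times_{\GG_k(\RR)}\GG_k(\SS)$, the first factor being $v:t\to s$ and the second being $u:s\to r$, glued along the common object $s$. The delicate point is to push this identification through on morphisms — matching the commutative squares defining morphisms of $\SS^+((k))$ with the pairs of isomorphisms in $\GG_k(\RR)$ and $\GG_k(\SS)$ that define morphisms of the fibre product — and to check that the resulting bijection is the canonical functor induced by $\phi^+_k$ and $c_k^\SS$. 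Once this is confirmed the square is cartesian, and the first part of the lemma applies.
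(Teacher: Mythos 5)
Your proposal is correct and follows essentially the same route as the paper: the comparison map is exhibited as the mate $(\bar c_k)_!(\phi^+_k)^*\to\phi_k^*(c_k)_!$ coming from the strictly commuting domain square, and the pullback hypothesis together with cofiberedness of the codomain functors is exactly what makes the projection formula of Section~3 apply; your verifications of cases (i) and (ii) — composable pairs of non-invertible degree-raising maps $t\to s\to r$ with $d(s)=k$, $d(r)=n$ glued along $s$, resp.\ the observation that an object of $\RR^+((k))$ lies in $\SS^+((k))$ iff its codomain has degree $\leq n$ — coincide with the paper's. The morphism-level identification in case (i) that you flag as ``delicate'' is treated at the same level of detail in the paper, which simply records that morphisms of $\SS^+((k))$ are transformations that are the identity on the last object, an isomorphism on the intermediate one, and degree-raising on the first, and asserts the coincidence with the fiber product.
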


\begin{proof}The pullback square is part of the commutative diagram\begin{diagram}[small]\SS&\lTo^{\bar{d}_k}&\SS^+((k))&\rTo^{\bar{c}_k}&\GG_k(\SS)\\\dTo^\phi&&\dTo^{\phi^+_k}&&\dTo_{\phi_k}\\\RR&\lTo^{d_k}&\RR^+((k))&\rTo^{c_k}&\GG_k(\RR)\end{diagram}whose rows enter into the definition of the latching objects. Together with the projection formula, this yields canonical isomorphisms:\begin{align*}L_k(\phi^*(X))&=\bar{c}_{k!}\bar{d}_k^*\phi^*(X)\\&=\bar{c}_{k!}(\phi^+_k)^*d_k^*(X)\\&\cong\phi_k^*c_{k!}d^*_k(X)\\&=\phi_k^*(L_k(X)).\end{align*}

If $\,\SS=\RR^+(n)$ then $\,\SS^+((k))$ has as objects the composable pairs $\,t\to s\to r$ of non-invertible, degree-raising maps such that $d(r)=n$ and $d(s)=k$, and as morphisms those transformations of diagrams which are the identity on the last object, an isomorphism on the intermediate object, and degree-raising on the first object; this category coincides with the fiber product of $\phi_k:\GG_k(\SS)\to\GG_k(\RR)$ and $c_k:\RR^+((k))\to\GG_k(\RR)$.

If $\SS=\RR_{\leq n}$ the pullback hypothesis follows from the fact that an object of $\RR^+((k))$ belongs to $\,\SS^+((k))$ if and only if its codomain is of degree $\leq n$.\end{proof}

\subsection{Undercategories.}The category $\RR^-((n))$ has as objects the non-invertible morphisms $u:r\to s$ in $\RR^-$ such that $d(r)=n$, and as morphisms from $u$ to $u'$ the commutative squares\begin{diagram}[small]r&\rTo^g&r'\\\dTo^u&&\dTo_{u'}\\s&\rTo^f&s'\end{diagram}such that $f\in\RR^-$ and $g\in\GG_n(\RR)$.

The wide subcategory $\RR^-(n)$ of $\RR^-((n))$ contains those morphisms for which $g$ is an identity. The category $\RR^-(r)$ of Section 1 may then be identified with the full subcategory of $\RR^-(n)$ spanned by the objects with domain $r$. Notice that $$\RR^-(n)=\coprod_{d(r)=n}\RR^-(r).$$

The categories introduced so far assemble into the following commutative diagram:\begin{diagram}[small]\RR&\lTo^{\gamma_n}&\RR^-((n))&\rTo^{\delta_n}&\GG_n(\RR)&\rTo^{j_n}&\RR\\&&\uTo^{\kappa_n}&&\uTo_{i_n}&&\\&&\RR^-(n)&\rTo^{\be_n}&\RR_n\end{diagram}where $\gamma_n$ denotes the codomain-functor, $\be_n$ and $\delta_n$ denote domain-functors, and $i_n$, $j_n$ and $\kappa_n$ are inclusion-functors.

Note that $\delta_n$ is fibered, i.e. $$\RR^-((n))\cong\int_{\GG_n(\RR)}\RR^-(-),$$and that the square is a pullback. In particular, the dual projection formula yields $$i_n^*(\delta_n)_*\cong(\be_n)_*\kappa_n^*.$$

\subsection{Matching objects.}\label{matchingdef}The definition of the \emph{matching object} $M_n(X)$ of an object $X$ of $\EE^\RR$ now takes the following form:\begin{align*}M_n(X)=(\delta_n)_*\gamma_n^*(X)\in\EE^{\GG_n(\RR)}.\end{align*}We write $X_n=j_n^*(X)=X_{|\GG_n(\RR)}$, so that we get in each degree $n$ a \emph{matching map}\begin{align*}X_n&\lrto M_n(X).\end{align*} Note that, since $\delta_n$ is fibered, we have more concretely:\begin{align*}M_n(X)_r&=\varprojlim_{r\to s}X_s,\end{align*}where the limit is taken over the category $\RR^-(r)$ as in Section 1. Accordingly, we will often simplify notation and write $M_r(X)$ for $M_n(X)_r$.

\begin{lma}\label{matching}Let $\phi:\SS\to\RR$ be a morphism of generalized Reedy categories. Suppose that the induced square\begin{diagram}[small]\SS^-((k))&\rTo&\GG_k(\SS)\\\dTo^{\phi^-_k}&&\dTo_{\phi_k}\\\RR^-((k))&\rTo&\GG_k(\RR)\end{diagram}is a pullback. Then, for each object $X$ of $\EE^\RR$, the natural comparison map of matching objects $\phi_k^*(M_k(X))\to M_k(\phi^*(X))$ is an isomorphism.\vspace{1ex}

The pullback hypothesis holds in particular in the following two cases:\begin{enumerate}\item[(i)]$\,\SS=\RR^-(n)$ and $\phi=\gamma_n\kappa_n:\RR^-(n)\to\RR$ is the codomain functor;\item[(ii)]$\,\SS=\RR_{\leq n}$ and $\phi:\RR_{\leq n}\to\RR$ is the canonical embedding.\end{enumerate}\end{lma}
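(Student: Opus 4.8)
The plan is to dualize the proof of Lemma \ref{latching} essentially verbatim, replacing the projection formula for the cofibered functor $c_k$ by the \emph{dual} projection formula for the fibered functor $\delta_k$ recorded in Section 3. First I would embed the hypothesized pullback square into the commutative diagram of undercategories from Section 4,
\begin{diagram}[small]
\SS&\lTo^{\bar{\gamma}_k}&\SS^-((k))&\rTo^{\bar{\delta}_k}&\GG_k(\SS)\\
\dTo^\phi&&\dTo^{\phi^-_k}&&\dTo_{\phi_k}\\
\RR&\lTo^{\gamma_k}&\RR^-((k))&\rTo^{\delta_k}&\GG_k(\RR)
\end{diagram}
whose rows enter into the definition $M_k(X)=(\delta_k)_*\gamma_k^*(X)$. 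Since $\delta_k$ is fibered and the right-hand square is assumed to be a pullback, the dual projection formula supplies a natural isomorphism $\phi_k^*(\delta_k)_*\cong(\bar{\delta}_k)_*(\phi^-_k)^*$. Combining this with the commutativity $\gamma_k\circ\phi^-_k=\phi\circ\bar{\gamma}_k$ of the left-hand square (which gives $(\phi^-_k)^*\gamma_k^*=\bar{\gamma}_k^*\phi^*$) yields the chain
\begin{align*}
\phi_k^*(M_k(X))&=\phi_k^*(\delta_k)_*\gamma_k^*(X)\\
&\cong(\bar{\delta}_k)_*(\phi^-_k)^*\gamma_k^*(X)\\
&=(\bar{\delta}_k)_*\bar{\gamma}_k^*\phi^*(X)\\
&=M_k(\phi^*(X)),
\end{align*}
and one checks that the composite is precisely the natural comparison map.

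For the two special cases I would verify the pullback hypothesis directly, dualizing the latching computation. In case (i), with $\SS=\RR^-(n)$ and $\phi=\gamma_n\kappa_n$ the codomain functor, the objects of $\SS^-((k))$ are the composable pairs $r\to s\to t$ of non-invertible degree-lowering maps with $d(r)=n$ and $d(s)=k$, while its morphisms are the transformations of diagrams that are the identity on the first object, an isomorphism on the intermediate one, and degree-lowering on the last; this is exactly the fiber product of $\phi_k:\GG_k(\SS)\to\GG_k(\RR)$ with $\delta_k:\RR^-((k))\to\GG_k(\RR)$. In case (ii), with $\SS=\RR_{\leq n}$, an object of $\RR^-((k))$ belongs to $\SS^-((k))$ if and only if its domain is of degree $\leq n$ (its codomain then automatically having even smaller degree), and this makes the square a pullback.

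Since the dual projection formula is already available, the argument is almost entirely formal and I do not expect a serious obstacle. The one point demanding care is bookkeeping the \emph{direction} of the comparison map: here it runs $\phi_k^*(M_k(X))\to M_k(\phi^*(X))$, opposite to the latching case, so I would want to confirm that the formal isomorphism produced by the projection formula genuinely is this natural map and not some other identification. The most computational step is the explicit identification of $\SS^-((k))$ as a category of composable pairs in case (i), but this is a direct transcription of the corresponding latching verification with ``raising'' replaced by ``lowering'' and the roles of domain and codomain interchanged.
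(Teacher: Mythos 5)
Your proposal is correct and is precisely the paper's intended argument: the published proof of this lemma literally reads ``Dual to the proof of Lemma \ref{latching},'' and your spelled-out dualization --- the dual projection formula for the fibered functor $\delta_k$, the identification of $\SS^-((k))$ with composable pairs of degree-lowering maps in case (i), and the domain-degree criterion in case (ii) --- matches that dual proof step for step. Your cautionary remark about the direction of the comparison map is well placed, and resolved exactly as you suggest, since the natural transformation $\al^*\phi_*\to\psi_*\be^*$ of Section 3 is the comparison map and becomes invertible under the pullback hypothesis.
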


\begin{proof}Dual to the proof of Lemma \ref{latching}.\end{proof}

\section{The Reedy model structure.}

We can reformulate the definition of the classes of maps in Section 1 as follows:

\begin{lma}A map $X\to Y$ in $\EE^\RR$ is a Reedy cofibration (resp. a Reedy weak equivalence, resp. a Reedy fibration) if and only if, for each natural number $n$, the map $X_n\cup_{L_n(X)}L_n(Y)\to Y_n$ (resp. $X_n\to Y_n$, resp. $X_n\to M_n(X)\times_{M_n(Y)}Y_n)$ is a cofibration (resp. a weak equivalence, resp. a fibration) in $\EE^{\GG_n(\RR)}$.\end{lma}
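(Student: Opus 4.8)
The plan is to reconcile the two descriptions of the classes of maps given in Section 1 (object-by-object over $\RR$) with the degreewise description over the groupoids $\GG_n(\RR)$. The essential point is that the product groupoid $\GG_n(\RR)$ collects together all objects $r$ with $d(r)=n$, and a morphism in $\EE^{\GG_n(\RR)}$ is by definition a family of $\Aut(r)$-equivariant maps indexed by the isomorphism classes of such $r$. Since $\GG_n(\RR)$ decomposes as a disjoint union of the connected components spanned by the isomorphism classes of objects of degree $n$, and each component is equivalent to the one-object groupoid $\Aut(r)$ for a chosen representative $r$, there is an equivalence of categories $\EE^{\GG_n(\RR)}\simeq\prod_{[r]}\EE^{\Aut(r)}$, where the product runs over iso-classes in degree $n$. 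Under this equivalence the projective model structure on $\EE^{\GG_n(\RR)}$ (in which weak equivalences and fibrations are detected after forgetting the groupoid action) corresponds to the product of the projective model structures on the various $\EE^{\Aut(r)}$.

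First I would record this equivalence and note that a map in $\EE^{\GG_n(\RR)}$ is a cofibration (resp. weak equivalence, resp. fibration) precisely when each of its components, regarded as a map in $\EE^{\Aut(r)}$ for a representative $r$ of each iso-class of degree $n$, is so. This is just the standard description of the projective model structure on a product of groupoid-diagram categories, valid because each $\GG_n(\RR)$ is a disjoint union of one-object-up-to-isomorphism groupoids. Next I would invoke the computations of Section 4: by \ref{latchingdef} and \ref{matchingdef} the latching object $L_n(X)\in\EE^{\GG_n(\RR)}$ has $r$-component $L_n(X)_r=\varinjlim_{s\to r}X_s=L_r(X)$ and similarly $M_n(X)_r=M_r(X)$, with these identifications being $\Aut(r)$-equivariant. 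Thus the $r$-component of the degree-$n$ relative latching map $X_n\cup_{L_n(X)}L_n(Y)\to Y_n$ is exactly the relative latching map $X_r\cup_{L_r(X)}L_r(Y)\to Y_r$ of Section 1, and dually for the matching maps and for the maps $X_n\to Y_n$ on the level of weak equivalences.

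Assembling these observations gives the equivalence directly. A map $X\to Y$ is a Reedy cofibration in the sense of Section 1 iff for \emph{each} object $r$ the relative latching map $X_r\cup_{L_r(X)}L_r(Y)\to Y_r$ is a cofibration in $\EE^{\Aut(r)}$; since this condition depends only on the iso-class of $r$ and the degree-$n$ relative latching map in $\EE^{\GG_n(\RR)}$ restricts on each component to precisely this map, the Section 1 condition holds iff for each $n$ the relative latching map $X_n\cup_{L_n(X)}L_n(Y)\to Y_n$ is a cofibration in $\EE^{\GG_n(\RR)}$. The same argument, run with weak equivalences detected objectwise and with the matching maps, handles the two remaining classes.

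The only genuine point to verify carefully — and the step I expect to be the main (if minor) obstacle — is the compatibility of the $\Aut(r)$-action used in Section 1 with the $\GG_n(\RR)$-structure coming from Section 4, i.e. that the identifications $L_n(X)_r\cong L_r(X)$ and $M_n(X)_r\cong M_r(X)$ are genuinely $\Aut(r)$-equivariant and that the equivalence $\EE^{\GG_n(\RR)}\simeq\prod_{[r]}\EE^{\Aut(r)}$ transports the projective model structure correctly. Everything else is bookkeeping: once the equivalence of model categories is in place, the three biconditionals follow by simply reading off components. No new homotopical input beyond the definition of the projective model structure on $\EE^{\GG_n(\RR)}$ is needed.
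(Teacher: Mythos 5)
Your proposal is correct and follows exactly the paper's argument: the paper's entire proof is the observation that $\EE^{\GG_n(\RR)}$ is equivalent to $\prod_r\EE^{\Aut(r)}$, with $r$ running over representatives of the connected components of $\GG_n(\RR)$, which is precisely your decomposition. The additional details you supply (the componentwise identification $L_n(X)_r\cong L_r(X)$, $M_n(X)_r\cong M_r(X)$ and the equivariance check) are the bookkeeping the paper leaves implicit, so there is nothing to correct.
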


\begin{proof}This just follows from the equivalence of categories$$\EE^{\GG_n(\RR)}\eqv\prod_r\EE^{\Aut(r)}$$where $r$ runs through a set of representatives for the connected components of the groupoid $\GG_n(\RR)$.\end{proof}

A Reedy (co)fibration which is also a Reedy weak equivalence will be referred to as a \emph{trivial Reedy (co)fibration}. The following lemmas are preparatory for the proof of Theorem \ref{model}.

\begin{lma}\label{LLP1}Let $f:A\to B$ be a trivial Reedy cofibration; suppose that, for each $n$, the induced map $L_n(f):L_n(A)\to L_n(B)$ is a pointwise trivial cofibration (i.e. $L_n(f)_r$ is a trivial cofibration in $\EE$ for each object $r$ of $\,\RR$). Then $f:A\to B$ has the left lifting property with respect to Reedy fibrations.\end{lma}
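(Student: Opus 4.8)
The plan is to build the required lift by an induction on the degree $n$, constructing the lifting map $B\to X$ one degree at a time using the skeletal filtration of $\EE^\RR$ implicit in the latching/matching formalism of Section 4. Concretely, suppose given a Reedy fibration $p:X\to Y$ and a commutative square with $f:A\to B$ on the left and $p$ on the right; I want a diagonal $B\to X$. I would assume inductively that the lift has been defined compatibly on all objects of degree $<n$, and then extend it over degree $n$. The point of working in $\EE^{\GG_n(\RR)}$ rather than object-by-object is that the maps $X_n\to M_n(X)\times_{M_n(Y)}Y_n$ and $A_n\cup_{L_n(A)}L_n(B)\to B_n$ are, respectively, a fibration and a cofibration in the \emph{equivariant} category $\EE^{\GG_n(\RR)}$, so the extension problem in degree $n$ becomes a single lifting problem there.

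The key reduction is to show that at stage $n$ the extension exists if and only if a certain square in $\EE^{\GG_n(\RR)}$ admits a diagonal filler. Using the inductively constructed lift on lower degrees, the latching data $L_n(A),L_n(B)$ and matching data $M_n(X),M_n(Y)$ are already determined, and the data at degree $n$ fit into a commutative square whose left-hand map is the relative latching map $A_n\cup_{L_n(A)}L_n(B)\to B_n$ and whose right-hand map is the relative matching map $X_n\to M_n(X)\times_{M_n(Y)}Y_n$. By hypothesis $f$ is a Reedy cofibration, so the left map is a cofibration in $\EE^{\GG_n(\RR)}$; since $p$ is a Reedy fibration, the right map is a fibration in $\EE^{\GG_n(\RR)}$. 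To solve this lifting problem I need one of the two maps to be a \emph{trivial} cofibration or fibration. This is exactly where the extra hypothesis enters: $f$ is a \emph{trivial} Reedy cofibration and, moreover, each $L_n(f)$ is a pointwise trivial cofibration. These two facts together should force the relative latching map $A_n\cup_{L_n(A)}L_n(B)\to B_n$ to be not merely a cofibration but a trivial cofibration in $\EE^{\GG_n(\RR)}$, after which the lift exists by the defining lifting property of the model structure on $\EE^{\GG_n(\RR)}$.

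The main obstacle, and the crux of the argument, is precisely this claim that the relative latching map is a \emph{weak equivalence} in $\EE^{\GG_n(\RR)}$. One knows $A_n\to B_n$ is a weak equivalence (this is the Reedy weak equivalence hypothesis read off in degree $n$), and one knows $L_n(A)\to L_n(B)$ is a pointwise weak equivalence (the added hypothesis); the difficulty is to deduce from these that the map out of the pushout $A_n\cup_{L_n(A)}L_n(B)$ is a weak equivalence. Since weak equivalences and fibrations in $\EE^{\GG_n(\RR)}$ are detected pointwise (forgetting the $\Aut(r)$-action), I can argue objectwise in $\EE$: in the pushout square the map $L_n(A)\to A_n$ is a cofibration (being a relative latching map at lower stages, or directly from the Reedy cofibration structure), so pushing out the trivial cofibration $L_n(A)\to L_n(B)$ along it yields a trivial cofibration $A_n\to A_n\cup_{L_n(A)}L_n(B)$; then two-out-of-three against the weak equivalence $A_n\to B_n$ gives that $A_n\cup_{L_n(A)}L_n(B)\to B_n$ is a weak equivalence, hence a trivial cofibration. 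I would carry out this two-out-of-three step carefully, as it is the heart of the lemma; the induction and the assembly of the lift across all degrees into a genuine natural transformation $B\to X$ are then routine, relying on the compatibility of the filler with the lower-degree data and on the fact that the whole diagram $B$ is the colimit of its restrictions to $\RR_{\leq n}$.
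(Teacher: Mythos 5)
Your proposal is correct and follows essentially the same degree-by-degree induction as the paper's proof: extend the lift over $\RR_{\leq n}$ by solving a single equivariant lifting problem in $\EE^{\GG_n(\RR)}$ between the relative latching map $v_n:A_n\cup_{L_n(A)}L_n(B)\to B_n$ and the relative matching map, and show $v_n$ is a weak equivalence by pushing out the pointwise trivial cofibration $L_n(f)$ and applying two-out-of-three against the weak equivalence $f_n:A_n\to B_n$. One small correction: your parenthetical claim that $L_n(A)\to A_n$ is a cofibration is neither available in general (since $A$ need not be Reedy cofibrant) nor needed --- trivial cofibrations are stable under pushout along \emph{arbitrary} maps, which is exactly how the paper argues.
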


\begin{proof}Consider a commutative square in $\EE^\RR$\begin{diagram}[small]A&\rTo^\al&Y\\\dTo^f&&\dTo_g\\B&\rTo^\be&X\end{diagram}where $f$ is a trivial Reedy cofibration and $g$ is a Reedy fibration, and furthermore $L_n(f):L_n(A)\to L_n(B)$ is a pointwise trivial cofibration for all $n$. We construct a diagonal filler $\gamma:B\to Y$ by constructing inductively a filler $\gamma_{\leq n}:B_{\leq n}\to Y_{\leq n}$ on the full subcategory $\RR_{\leq n}$ of objects of $\RR$ of degree $\leq n$. For $n=0$, we get a diagonal filler $\gamma_0:B_0\to Y_0$ in\begin{diagram}[small]A_0&\rTo^{\al_0}&Y_0\\\dTo^{f_0}&&\dTo_{g_0}\\B_0&\rTo^{\be_0}&X_0\end{diagram}since $\RR_{\leq 0}$ is the groupoid $\GG_0(\RR)$, and $L_0(A)=0$, $M_0(X)=1$, so that by hypothesis $f_0$ is a trivial cofibration in $\EE^{\GG_0(\RR)}$ and $g_0$ is a fibration in $\EE^{\GG_0(\RR)}$.

Assume by induction that a filler $\gamma_{\leq n-1}:B_{\leq n-1}\to Y_{\leq n-1}$ has been found for \begin{diagram}[small]A_{\leq n-1}&\rTo^{\al_{\leq n-1}}&Y_{\leq n-1}\\\dTo^{f_{\leq n-1}}&&\dTo_{g_{\leq n-1}}\\B_{\leq n-1}&\rTo^{\be_{\leq n-1}}&X_{\leq n-1}.\end{diagram}This yields composite maps\begin{gather*}L_n(B)\lrto L_n(Y)\lrto Y_n\text{ and }B_n\lrto M_n(B)\lrto M_n(Y)\end{gather*}as well as a commutative square\begin{diagram}[small]A_n\cup_{L_n(A)}L_n(B)&\rTo&Y_n\\\dTo^{v_n}&&\dTo_{w_n}\\B_n&\rTo&X_n\times_{M_n(X)}M_n(Y).\end{diagram}A $\GG_n(\RR)$-equivariant filler is exactly what is needed in order to complete the inductive step. To see that such a filler exists, note that by hypothesis $v_n$ is a cofibration and $w_n$ is a fibration in $\EE^{\GG_n(\RR)}$. It is thus enough to check that $v_n$ is a weak equivalence. For this, consider the following diagram in which the square is a pushout: \begin{diagram}[small,UglyObsolete,silent]L_n(A)_r&\rTo&A_r&\rTo^{f_r\quad}&B_r\\\dTo^{L_n(f)_r}&&\dTo&\ruTo_{(v_n)_r}&\\L_n(B)_r&\rTo&(A\cup_{L_n(A)}L_n(B))_r.&&\end{diagram}Since, by hypothesis, $L_n(f)_r$ is a trivial cofibration in $\EE$, and $f_r$ is a weak equivalence, $v_n$ is a weak equivalence as required.\end{proof}

\begin{lma}\label{LLP2}Let $f:A\to B$ be a Reedy cofibration such that $f_r:A_r\to B_r$ is a weak equivalence for all objects $r$ of $\RR$ of degree $<n$. Then, the induced map $L_n(f):L_n(A)\to L_n(B)$ is a pointwise trivial cofibration.\end{lma}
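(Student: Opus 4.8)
The plan is to argue by induction on the degree $n$, establishing the slightly more flexible statement that $L_m(f)$ is a pointwise trivial cofibration for every $m\le n$. Since a pointwise statement may be checked one object at a time, I fix an object $r$ with $d(r)=n$ and work in $\EE$, the goal being to prove that $L_n(f)_r=\varinjlim_{\RR^+(r)}f$ is a trivial cofibration. The first ingredient is a two-out-of-three step: for every $s$ with $d(s)<n$ the relative latching map $g_s\colon A_s\cup_{L_s(A)}L_s(B)\to B_s$ is a trivial cofibration in $\EE^{\Aut(s)}$. Indeed it is a cofibration there by the hypothesis that $f$ is a Reedy cofibration, and it is a weak equivalence because in the factorization $f_s=g_s\circ h_s$ the map $h_s$ is the cobase change of $L_s(f)$ along the latching map $L_s(A)\to A_s$, hence (underlying) a trivial cofibration by the inductive hypothesis applied in degree $d(s)<n$, while $f_s$ is a weak equivalence by assumption.

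Next I filter the indexing category $\RR^+(r)$ by the degree of the source, writing $F^X_k=\varinjlim X$ over the full subcategory of those $u\colon s\to r$ with $d(s)\le k$; thus $F^X_{-1}=\emptyset$ (which already disposes of the case $n=0$) and $F^X_{n-1}=L_n(X)_r$. The objects newly added at stage $k$ are the non-invertible $u\colon s\to r$ in $\RR^+$ with $d(s)=k$; their isomorphism classes correspond to $\Aut(s)$-orbits of such $u$ (for $s$ ranging over representatives of degree $k$), and the automorphism group of $(s,u)$ in $\RR^+(r)$ is the stabilizer $\Gamma_u=\{w\in\Aut(s):uw=u\}\subseteq\Aut(s)$. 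Writing $(-)_{\Gamma_u}$ for $\Gamma_u$-coinvariants (the colimit over the corresponding connected component $B\Gamma_u$), the passage from $F^X_{k-1}$ to $F^X_k$ is the pushout attaching, along the latching maps, the objects $(X_s)_{\Gamma_u}$ relative to $(L_s(X))_{\Gamma_u}$. Applying this to the pair $(A,B)$ and using that coinvariants, being a left adjoint, preserve pushouts, the relative transition map is the pushout of $\coprod_{[u]}(g_s)_{\Gamma_u}$, the coproduct over these orbits of the $\Gamma_u$-coinvariants of the relative latching maps.

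The technical heart is to show that for each subgroup $\Gamma\subseteq\Aut(s)$ the functor $T_\Gamma=(-)_\Gamma\circ\mathrm{Res}^{\Aut(s)}_\Gamma\colon\EE^{\Aut(s)}\to\EE$ is left Quillen, so that it carries the trivial cofibration $g_s$ to a trivial cofibration in $\EE$. This is exactly where $\RR$-projectivity enters: it furnishes a projective model structure on $\EE^{\Aut(s)}$, in which (trivial) fibrations are detected on underlying objects. The right adjoint of $T_\Gamma$ sends $Z\in\EE$ to the coinduction of the trivial $\Gamma$-object on $Z$, whose underlying object is the product $\prod_{\Gamma\backslash\Aut(s)}Z$; since a product of (trivial) fibrations is again a (trivial) fibration, this right adjoint preserves (trivial) fibrations, and hence $T_\Gamma$ preserves (trivial) cofibrations. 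Note that this argument stays within the model structures on $\EE$ and $\EE^{\Aut(s)}$ and never requires a model structure on $\EE^\Gamma$ itself.

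Granting this, each $(g_s)_{\Gamma_u}=T_{\Gamma_u}(g_s)$ is a trivial cofibration in $\EE$; so is any coproduct of these, and so is any pushout of such a coproduct. Composing the finitely many relative transition maps then shows that $L_n(f)_r$ is a trivial cofibration, which is what we want, for each $r$ of degree $n$. The main obstacle is precisely this equivariant bookkeeping — the appearance of the stabilizers $\Gamma_u$ and of coinvariants in the skeletal filtration, together with the left-Quillen property of $T_\Gamma$ — which is what distinguishes the generalized situation from the strict Reedy case, where all the groups $\Aut(s)$ and $\Gamma_u$ are trivial and one recovers the classical latching filtration.
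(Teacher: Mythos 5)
Your argument is correct, but it takes a genuinely different route from the paper's. The paper never decomposes the latching colimit into cells: it transposes the lifting problem for $i_n^*L_n(f)$ against a fibration, via the projection formula $i_n^*L_n=i_n^*(c_n)_!d_n^*=(b_n)_!k_n^*d_n^*$ and the adjunction $(b_n)_!\dashv b_n^*$, into a lifting problem for $k_n^*d_n^*(f)$ in $\EE^{\RR^+(n)}$, and then invokes Lemma \ref{LLP1} for the generalized Reedy category $\SS=\RR^+(n)$ (which has $\SS=\SS^+$, so Reedy fibrations there are pointwise), checking its hypotheses by Lemma \ref{latching} together with the fact that faithfulness of $\phi_k:\GG_k(\SS)\to\GG_k(\RR)$ makes $\phi_k^*$ preserve projective cofibrations, and running the same induction on $n$ that you do. Observe that the automorphism groups of objects of $\RR^+(n)$ are exactly your stabilizers $\Gamma_u$, so the equivariant content is identical, but the paper packages it into the projective model structures on $\EE^{\GG_k(\RR^+(n))}$ whereas you handle it by hand. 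Your route buys two things: first, an explicit degreewise cell structure on $L_n(X)_r$, attached along $\Gamma_u$-coinvariants of relative latching maps, which is of independent interest (compare the role of isotropy in Proposition \ref{discrete}); second, slightly leaner hypotheses — the paper's appeal to Lemma \ref{LLP1} over $\RR^+(n)$ tacitly uses projective model structures on $\EE^{\Gamma_u}$ for the stabilizer subgroups (automatic when $\EE$ is cofibrantly generated, but not literally contained in the definition of $\RR$-projectivity), whereas your coinduction argument, showing $T_\Gamma=(-)_\Gamma\circ\mathrm{Res}$ is left Quillen because its right adjoint $Z\mapsto\mathrm{CoInd}(\mathrm{triv}\,Z)$ has underlying object a product of copies of $Z$ and fibrations in the projective structure are detected underlying, uses only the model structures on $\EE^{\Aut(s)}$ and on $\EE$. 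What the paper's route buys is brevity and reuse: given Lemma \ref{LLP1} and the Kan-extension formalism of Sections 3--4, the proof is a few lines with no colimit combinatorics. The one step you assert rather than prove is the pushout decomposition of $F^X_{k-1}\to F^X_k$; it is true — every morphism of $\RR^+(r)$ lies in $\RR^+$ by axiom (iii), hence cannot lower the source degree and is invertible between degree-$k$ objects, and the comma category of the degree-$\leq(k-1)$ part over an object $u:s\to r$ is isomorphic to $\RR^+(s)$ compatibly with the $\Gamma_u$-actions, from which the claimed pushout follows by comparing cocones — but since it is the crux of your proof, it deserves a written verification rather than a one-line assertion.
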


\begin{proof}For $n=0$, there is nothing to prove; therefore, we can assume inductively that $L_k(f):L_k(A)\to L_k(B)$ is a pointwise trivial cofibration for $k<n$. We want to show that $i^*_nL_n(f)$ is a trivial cofibration in $\EE^{\RR_n}$. To this end, we have to find a filler for any commutative square\begin{diagram}[small]i_n^*L_n(A)&\rTo&Y\\\dTo^{i_n^*L_n(f)}&&\dTo_g\\i_n^*L_n(B)&\rTo&X\end{diagram}in $\EE^{\RR_n}$ in which $g:Y\to X$ is a fibration. Since $i^*_nL_n=i^*_n(c_n)_!d_n^*=(b_n)_!k_n^*d_n^*$, a filler for the former square is the same as a filler for the following square in $\EE^{\RR^+(n)}$:\begin{diagram}[small]k_n^*d^*_n(A)&\rTo&b_n^*(Y)\\\dTo^{k_n^*d^*_n(f)}&&\dTo_{b_n^*(g)}\\k_n^*d^*_n(B)&\rTo&b_n^*(X).\end{diagram}In order to finish the proof, we shall apply Lemma \ref{LLP1} to this square. The category $\SS=\RR^+(n)$ is a generalized Reedy category for which $\SS=\SS^+$. In particular, Reedy fibrations are the same as pointwise fibrations, so $b_n^*(g)$ is a Reedy fibration. Moreover, $k^*_nd_n^*(f)$ is a Reedy weak equivalence in $\EE^\SS$, since the objects of $\SS$ have degree $<n$. It remains to be shown that $k^*_nd_n^*(f)$ is a Reedy cofibration whose induced maps on latching objects of degree $<n$ are pointwise trivial cofibrations.

Write $\phi=d_nk_n$. By Lemma \ref{latching}, the functor $\phi_k^*:\EE^{\GG_k(\RR)}\to\EE^{\GG_k(\SS)}$ induces a canonical isomorphism $L_k(\phi^*(A))\cong\phi_k^*(L_k(A))$. Therefore, the relative latching map $\phi^*(A)\cup_{L_k(\phi^*(A))}L_k(\phi^*(B))\to\phi^*(B)$ may be identified with $\phi_k^*$ of the relative latching map $A_k\cup_{L_k(A)}L_k(B)\to B_k$. Observe that $\phi_k:\GG_k(\SS)\to\GG_k(\RR)$ is a faithful functor between groupoids, so $\phi_k^*$ preserves cofibrations, thus $k_n^*d_n^*(f)$ is a Reedy cofibration. Moreover, $L_k(\phi^*(A))\to L_k(\phi^*(B))$ is a pointwise trivial cofibration for $k<n$, since $L_k(A)\to L_k(B)$ is so by induction hypothesis.\end{proof}

\begin{lma}\label{RLP1}Let $g:Y\to X$ be a trivial Reedy fibration; suppose that for each $n$, the induced map $M_n(g):M_n(Y)\to M_n(X)$ is a (pointwise) trivial fibration. Then $g:Y\to X$ has the right lifting property with respect to Reedy cofibrations.\end{lma}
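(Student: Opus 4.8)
The plan is to argue dually to Lemma~\ref{LLP1}. Given a commutative square in $\EE^\RR$ with $f:A\to B$ a Reedy cofibration on the left and $g:Y\to X$ a trivial Reedy fibration on the right, for which $M_n(g):M_n(Y)\to M_n(X)$ is a pointwise trivial fibration for every $n$, I would construct a diagonal filler $\gamma:B\to Y$ by building fillers $\gamma_{\leq n}:B_{\leq n}\to Y_{\leq n}$ on the full subcategories $\RR_{\leq n}$ inductively on the degree. The base case $n=0$ is immediate: $\RR_{\leq 0}=\GG_0(\RR)$ and $M_0(X)=1$, so $g_0$ is itself a trivial fibration in $\EE^{\GG_0(\RR)}$ while $f_0$ is a cofibration, whence a filler exists.

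For the inductive step, having extended the filler over $\RR_{\leq n-1}$, the data assemble (exactly as in the proof of Lemma~\ref{LLP1}) into a commutative square in $\EE^{\GG_n(\RR)}$
\[
\begin{diagram}[small]
A_n\cup_{L_n(A)}L_n(B)&\rTo&Y_n\\
\dTo^{v_n}&&\dTo_{w_n}\\
B_n&\rTo&X_n\times_{M_n(X)}M_n(Y),
\end{diagram}
\]
and what is required to complete the induction is a $\GG_n(\RR)$-equivariant diagonal filler. Here $v_n$ is the relative latching map and $w_n$ the relative matching map of $g$. By hypothesis $v_n$ is a cofibration (since $f$ is a Reedy cofibration) and $w_n$ is a fibration (since $g$ is a Reedy fibration) in $\EE^{\GG_n(\RR)}$; hence it suffices to show that $w_n$ is moreover a weak equivalence, for then $w_n$ is a trivial fibration and the cofibration $v_n$ lifts against it in $\EE^{\GG_n(\RR)}$, the lift being automatically equivariant.

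The key point, dual to the pushout argument in Lemma~\ref{LLP1}, is to verify that $w_n$ is a weak equivalence pointwise. Fix an object $r$ with $d(r)=n$ and consider the defining pullback $(X_n\times_{M_n(X)}M_n(Y))_r=X_r\times_{M_n(X)_r}M_n(Y)_r$ of $M_n(g)_r:M_n(Y)_r\to M_n(X)_r$ along the matching map of $X$. Since $M_n(g)_r$ is a trivial fibration by hypothesis, so is its pullback, the projection $X_r\times_{M_n(X)_r}M_n(Y)_r\to X_r$. The composite of $(w_n)_r$ with this projection is exactly $g_r:Y_r\to X_r$, which is a weak equivalence because $g$ is a Reedy weak equivalence. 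By the two-out-of-three property, $(w_n)_r$ is a weak equivalence, as required.

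The argument is genuinely dual, so the one thing to watch is the asymmetry flagged after Definition~\ref{basic}: in the projective model structure on $\EE^{\Aut(r)}$ the roles of the two maps cannot be interchanged, and here it is $w_n$ (not $v_n$) that must be upgraded to a trivial fibration so that a cofibration may lift against it. Apart from this bookkeeping, the only real content is the two-out-of-three step, which rests on the stability of trivial fibrations under pullback; this is the dual counterpart of the stability of trivial cofibrations under pushout used in Lemma~\ref{LLP1}, and I expect it to be the single nontrivial ingredient.
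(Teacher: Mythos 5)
Your proof is correct and is precisely the argument the paper intends: the paper's proof of Lemma~\ref{RLP1} reads ``Dual to the proof of Lemma~\ref{LLP1}'', and your inductive construction of $\gamma_{\leq n}$, reducing the inductive step to an equivariant lift of the cofibration $v_n$ against $w_n$, upgraded to a trivial fibration via pullback-stability of trivial fibrations and two-out-of-three applied to $g_r$, is exactly that dualization spelled out. Only a small aside: the asymmetry of the projective model structure is harmless here (in $\EE^{\GG_n(\RR)}$ both lifting patterns hold as in any model category) --- it genuinely bites only in Lemma~\ref{RLP2}, where axiom (iv) is needed to make $\RR^-(n)$ automorphism-free.
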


\begin{proof}Dual to the proof of Lemma \ref{LLP1}.\end{proof}

\begin{lma}\label{RLP2}Let $g:Y\to X$ be a Reedy fibration such that $g_r:Y_r\to X_r$ is a weak equivalence for all objects $r$ of $\RR$ of degree $<n$. Then, the induced map $M_n(g):M_n(Y)\to M_n(X)$ is a (pointwise) trivial fibration.\end{lma}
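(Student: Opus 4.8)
The plan is to mirror the proof of Lemma \ref{LLP2}, dualizing systematically: latching objects become matching objects, colimits become limits, and the left adjoint $(c_n)_!$ is replaced by the right adjoint $(\delta_n)_*$. The only step that is not purely formal is isolating where the asymmetry of the projective model structure forces us to invoke axiom (iv) of Definition \ref{basic} in place of the faithfulness argument used on the latching side; this is the step I expect to be the crux.

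First I would argue by induction on $n$. For $n=0$ the matching object is a limit over the empty category, hence terminal, so $M_0(g)$ is trivially a trivial fibration. So assume $M_k(g)\colon M_k(Y)\to M_k(X)$ is a pointwise trivial fibration for all $k<n$; this is Lemma \ref{RLP2} in lower degrees, whose hypothesis holds because $g_r$ is a weak equivalence for $r$ of degree $<k<n$. It then suffices to prove that $i_n^*M_n(g)$ is a trivial fibration in $\EE^{\RR_n}$, and since $\RR_n$ is discrete this amounts to solving an arbitrary lifting problem of $i_n^*M_n(g)$ against a cofibration $f\colon A\to B$ of $\EE^{\RR_n}$, i.e. against a pointwise cofibration of $\EE$.

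Next I would transpose this lifting problem into $\EE^\SS$ for $\SS=\RR^-(n)$. The dual projection formula gives $i_n^*(\delta_n)_*\cong(\be_n)_*\kappa_n^*$, hence $i_n^*M_n=(\be_n)_*\kappa_n^*\gamma_n^*$, and by the adjunction $\be_n^*\dashv(\be_n)_*$ a diagonal filler of the original square is the same as a diagonal filler of
\begin{diagram}[small]\be_n^*(A)&\rTo&\psi^*(Y)\\\dTo^{\be_n^*(f)}&&\dTo_{\psi^*(g)}\\\be_n^*(B)&\rTo&\psi^*(X)\end{diagram}
in $\EE^\SS$, where $\psi=\gamma_n\kappa_n\colon\RR^-(n)\to\RR$ is the codomain functor and $\SS=\SS^-$. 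I would solve this by applying Lemma \ref{RLP1} to $\psi^*(g)$, so I must check its hypotheses. By Lemma \ref{matching}(i) the relevant square is a pullback, so the relative matching maps of $\psi^*(g)$ are identified with $\psi_k^*$ applied to those of $g$; since $g$ is a Reedy fibration and $\psi_k^*$ preserves fibrations (these being pointwise in the projective structure, so that no faithfulness is needed here), $\psi^*(g)$ is a Reedy fibration, and it is a Reedy weak equivalence because pointwise it equals $g_s$ with $d(s)<n$. Finally $M_k^\SS(\psi^*(g))\cong\psi_k^*(M_k(g))$ is a pointwise trivial fibration for each $k<n$ by the induction hypothesis, so Lemma \ref{RLP1} yields that $\psi^*(g)$ has the right lifting property with respect to Reedy cofibrations of $\EE^\SS$.

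The crux is then to verify that $\be_n^*(f)$ is such a Reedy cofibration, and here the asymmetry noted after Definition \ref{basic} surfaces. Because $f$ lives over the discrete groupoid $\RR_n$ it carries no equivariance, and—unlike on the latching side in Lemma \ref{LLP2}—there is no faithful comparison functor to exploit. Instead I would use axiom (iv): an automorphism of an object $u\colon r\to s$ of $\RR^-(n)$ is an isomorphism $w$ with $wu=u$, which axiom (iv) forces to be the identity. Hence each $\GG_k(\SS)$ is discrete, $\EE^{\GG_k(\SS)}\simeq\prod\EE$, and—since $\SS=\SS^-$ makes all latching objects initial—a Reedy cofibration of $\EE^\SS$ is precisely a pointwise cofibration of $\EE$; as $\be_n^*(f)$ is pointwise given by the maps $f_r$, it qualifies. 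The filler produced by Lemma \ref{RLP1} then transposes back to the required lift in $\EE^{\RR_n}$, completing the induction.
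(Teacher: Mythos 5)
Your proof is correct and follows essentially the same route as the paper's: the same induction on $n$, the same transposition of the lifting problem via the dual projection formula $i_n^*(\delta_n)_*\cong(\be_n)_*\kappa_n^*$ and the $(\be_n^*,(\be_n)_*)$-adjunction into $\EE^{\RR^-(n)}$, and the same application of Lemma \ref{RLP1} with hypotheses verified through Lemma \ref{matching} and the inductive assumption. Your identification of axiom (iv) as the reason $\RR^-(n)$ has no non-trivial automorphisms---so that, together with $\SS=\SS^-$, Reedy cofibrations in $\EE^\SS$ are exactly pointwise cofibrations---is precisely the crux the paper itself flags as the source of the asymmetry.
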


\begin{proof}For $n=0$, there is nothing to prove; therefore, we can assume inductively that $M_k(g):M_k(Y)\to M_k(X)$ is a trivial fibration for $k<n$. We want to show that $i^*_nM_n(g)$ is a trivial fibration in $\EE^{\RR_n}$. To this end, we have to find a filler for any commutative square\begin{diagram}[small]A&\rTo&i_n^*M_n(Y)\\\dTo^f&&\dTo_{i_n^*M_n(g)}\\B&\rTo&i_n^*M_n(X)\end{diagram}in $\EE^{\RR_n}$ in which $f:A\to B$ is a cofibration. Since $i^*_nM_n=i^*_n(\delta_n)_*\gamma_n^*=(\beta_n)_*\kappa_n^*\gamma_n^*$, a filler for the former square is the same as a filler for the following square in $\EE^{\RR^-(n)}$:\begin{diagram}[small]\beta_n^*(A)&\rTo&\kappa_n^*\gamma_n^*(Y)\\\dTo^{\beta_n^*(f)}&&\dTo_{\kappa_n^*\gamma_n^*(g)}\\\beta^*_n(B)&\rTo&\kappa_n^*\gamma_n^*(X).\end{diagram}In order to finish the proof, we shall apply Lemma \ref{RLP1} to this square. The category $\SS=\RR^-(n)$ is a generalized Reedy category for which $\SS=\SS^-$; notice that $\SS$ is has no non-trivial automorphisms in virtue of axiom (iv) of Definition \ref{basic}; in other words, $\SS$ is equivalent to a strict Reedy category. Therefore, Reedy cofibrations are the same as pointwise cofibrations, so $\beta_n^*(f)$ is a Reedy cofibration. Moreover, $\kappa^*_n\gamma_n^*(f)$ is a Reedy weak equivalence in $\EE^\SS$, since the objects of $\SS$ have degree $<n$. It remains to be shown that $\kappa^*_n\gamma_n^*(f)$ is a Reedy fibration whose induced maps on matching objects of degree $<n$ are trivial fibrations.

Write $\phi=\gamma_n\kappa_n$. By Lemma \ref{matching}, the functor $\phi_k^*:\EE^{\GG_k(\RR)}\to\EE^{\GG_k(\SS)}$ induces a canonical isomorphism $\phi_k^*(M_k(Y))\cong M_k(\phi^*(X))$. Therefore, the relative matching map $\phi^*(Y)\to\phi^*(X)\times_{M_k(\phi^*(X))}M_k(\phi^*(Y))$ may be identified with $\phi_k^*$ of the relative matching map $Y_k\to X_k\times_{M_k(X)}M_k(Y)$. Observe that $\phi_k^*$ preserves fibrations, thus $\kappa_n^*\gamma_n^*(g)$ is a Reedy fibration. Moreover, $M_k(\phi^*(Y))\to M_k(\phi^*(X))$ is a trivial fibration for $k<n$, since $M_k(Y)\to M_k(X)$ is so by induction hypothesis.\end{proof}

\emph{Proof of Theorem \ref{model}.} Limits and colimits in $\EE^\RR$ are constructed pointwise. The class of Reedy weak equivalences has the two-out-of-three property. Moreover, all three classes are closed under retract. It remains to be shown that the lifting and factorization axioms of a Quillen model category hold.

For the lifting axiom, observe that by Lemma \ref{LLP2}, trivial Reedy cofibrations fulfill the hypothesis of Lemma \ref{LLP1}, and therefore have the left lifting property with respect to Reedy fibrations. Dually, Lemmas \ref{RLP2} and \ref{RLP1} imply that trivial Reedy fibrations have the right lifting property with respect to Reedy cofibrations.

We now come to the factorization axiom. Given a map $f:X\to Y$ in $\EE^\RR$, we shall construct inductively a factorization $X\to A\to Y$ of $f$ into a trivial Reedy cofibration followed by a Reedy fibration.

For $n=0$, factor $f_0$ in $\EE^{\GG_0(\RR)}$ as $X_0\lrto A_0\lrto Y_0$ into a trivial cofibration followed by a fibration. Next, if $X_{\leq n-1}\to A_{\leq n-1}\to Y_{\leq n-1}$ is a factorization of $f_{\leq n-1}$ into trivial Reedy cofibration followed by Reedy fibration in $\EE^{\RR_{\leq n-1}}$, we obtain the following commutative diagram in $\EE^{\GG_n(\RR)}:$\begin{diagram}[small,silent]L_n(X)&\rTo&L_n(A)&\rTo&L_n(Y)\\\dTo&&&&\dTo\\X_n&&&&Y_n\\\dTo&&&&\dTo\\M_n(X)&\rTo&M_n(A)&\rTo&M_n(Y).\end{diagram}This diagram induces a map $X_n\cup_{L_n(X)}L_n(A)\to M_n(A)\times_{M_n(Y)}Y_n$ which we factor as a trivial cofibration followed by a fibration in $\EE^{\GG_n(\RR)}$:\begin{diagram}[small]X_n\cup_{L_n(X)}L_n(A)&\rTo^\sim&A_n&\rTo&M_n(A)\times_{M_n(Y)}Y_n.\end{diagram}The object $A_n$ of $\EE^{\GG_n(\RR)}$ together with the maps $L_n(A)\to A_n\to M_n(A)$ define an extension of $A_{\leq n-1}$ to an object $A_{\leq n}$ in $\EE^{\RR_{\leq n}}$ together with a factorization of $f_{\leq n}:X_{\leq n}\to Y_{\leq n}$ into  a Reedy cofibration $X_{\leq n}\to A_{\leq n}$ followed by a Reedy fibration $A_{\leq n}\to Y_{\leq n}$. The former map is a \emph{trivial} Reedy cofibration, because the map $X_n\to A_n$ decomposes into two maps $X_n\to X_n\cup_{L_n(X)}L_n(A)\to A_n$, the first one of which is a weak equivalence by Lemma \ref{LLP2}, the second one by construction. This defines the required factorization of $f_{\leq n}$ in $\EE^{\RR_{\leq n}}$.

The factorization of $f$ into a Reedy cofibration followed by a trivial Reedy fibration is constructed in a dual manner using Lemma \ref{RLP2} instead of Lemma \ref{LLP2}.\qed\vspace{1ex}

The proof of Theorem \ref{model} uses implicitly that trivial Reedy (co)fibrations may be characterized in terms of relative matching (latching) maps. Since this is a pivotal property of the Reedy model structure, we state it explicitly:

\begin{prp}\label{trivial}A map $f:A\to B$ in $\EE^\RR$ is a trivial Reedy cofibration if and only if, for each $n$, the relative latching map $A_n\cup_{L_n(A)}L_n(B)\to B_n$ is a trivial cofibration in $\,\EE^{\GG_n(\RR)}$.

A map $g:Y\to X$ in $\EE^\RR$ is a trivial Reedy fibration if and only if, for each $n$, the relative matching map $Y_n\to X_n\times_{M_n(X)}M_n(Y)$ is a trivial fibration in $\,\EE^{\GG_n(\RR)}$.\end{prp}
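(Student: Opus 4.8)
The plan is to prove the latching characterization of trivial Reedy cofibrations in full; the matching characterization of trivial Reedy fibrations is then entirely dual, replacing Lemma \ref{LLP2} by Lemma \ref{RLP2} and pushouts by pullbacks. The whole argument rests on a single comparison: because colimits (and the relevant weak equivalences) in the projective model structure on $\EE^{\GG_n(\RR)}$ are computed pointwise, I can, for each $n$, form the pushout object $A_n\cup_{L_n(A)}L_n(B)$ and factor $f_n$ as
$$A_n\lrto A_n\cup_{L_n(A)}L_n(B)\lrto B_n,$$
where the second map is the relative latching map. At each object $r$ of degree $n$, the first map is the pushout in $\EE$ of $L_n(f)_r$ along $L_n(A)_r\to A_r$, so whenever $L_n(f)$ is a pointwise trivial cofibration (which is precisely the output of Lemma \ref{LLP2}) the first map is a pointwise trivial cofibration as well, since pushouts of trivial cofibrations are trivial cofibrations.

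For the forward implication I would assume $f$ is a trivial Reedy cofibration. Then each relative latching map is a cofibration in $\EE^{\GG_n(\RR)}$ by definition, and since $f_r$ is a weak equivalence in every degree, Lemma \ref{LLP2} makes each $L_n(f)$ a pointwise trivial cofibration. The factorization above then exhibits the relative latching map as the ``second factor'' of the weak equivalence $f_n$ through a weak equivalence, so two-out-of-three (applied pointwise in $\EE$, where the weak equivalences of $\EE^{\GG_n(\RR)}$ are detected) shows it is a weak equivalence, hence a trivial cofibration.

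For the converse I would run an induction on $n$. The base case $n=0$ is immediate: $L_0(A)$ and $L_0(B)$ are initial, so the relative latching map \emph{is} $f_0$, which is a weak equivalence by hypothesis. Assuming $f_r$ is a weak equivalence for all $r$ with $d(r)<n$, Lemma \ref{LLP2} again supplies the pointwise trivial cofibration $L_n(f)$, whence $A_n\to A_n\cup_{L_n(A)}L_n(B)$ is a pointwise weak equivalence; composing with the relative latching map, a weak equivalence by hypothesis, gives that $f_n$ is a weak equivalence by two-out-of-three, closing the induction and showing $f$ is a Reedy weak equivalence (it is a Reedy cofibration since trivial cofibrations are cofibrations). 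The dual statement is handled the same way: for $g:Y\to X$ one factors $g_n$ as $Y_n\to X_n\times_{M_n(X)}M_n(Y)\to X_n$, uses Lemma \ref{RLP2} to make $M_n(g)$ a pointwise trivial fibration and hence its pullback $X_n\times_{M_n(X)}M_n(Y)\to X_n$ a pointwise trivial fibration, and runs the same two-out-of-three argument with base case using that $M_0(X)$ is terminal.

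The content here is genuinely Lemmas \ref{LLP2} and \ref{RLP2}, which are already in hand, so I expect no deep difficulty; the one point that needs care — and hence the main obstacle to a clean write-up — is keeping the two meanings of ``cofibration'' apart. Cofibrations in the projective model structure on $\EE^{\GG_n(\RR)}$ are not pointwise, whereas Lemma \ref{LLP2} delivers information that is genuinely pointwise. The reconciliation, which must be stated explicitly, is that weak equivalences and the pushouts involved are computed pointwise, so a cofibration in $\EE^{\GG_n(\RR)}$ that is a pointwise weak equivalence is automatically a trivial cofibration; this is exactly what allows the pointwise conclusion of Lemma \ref{LLP2} to feed back into a statement about trivial cofibrations in $\EE^{\GG_n(\RR)}$.
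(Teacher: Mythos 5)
Your proof is correct and takes essentially the same route as the paper's: both factor $f_n$ as $A_n\to A_n\cup_{L_n(A)}L_n(B)\to B_n$, deduce via Lemma \ref{LLP2} that the first factor is a (pointwise) pushout of a pointwise trivial cofibration and hence a weak equivalence in $\EE^{\GG_n(\RR)}$, apply two-out-of-three for the forward direction and an induction on $n$ based on Lemma \ref{LLP2} for the converse, and handle the fibration statement dually via Lemma \ref{RLP2}. Your closing observation---that projective weak equivalences and the relevant pushouts are computed pointwise, so pointwise conclusions feed back into statements about trivial cofibrations in $\EE^{\GG_n(\RR)}$---is precisely the step the paper's terser write-up leaves implicit.
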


\begin{proof}For each $n$, the induced map $f_n:A_n\to B_n$ in $\EE^{\GG_n(\RR)}$ factors as$$A_n\overset{u_n}{\rto} A_n\cup_{L_n(A)}L_n(B)\overset{v_n}{\rto} B_n.$$If $f$ is a trivial Reedy cofibration then $f_n$ is a weak equivalence, so that, by Lemma \ref{LLP2}, $u_n$ is a weak equivalence, and hence $v_n$ is a trivial cofibration. Conversely, if each $v_n$ is a trivial cofibration then an induction on $n$, based on Lemma \ref{LLP2}, shows that $u_n$ is a weak equivalence, and hence $f$ is a trivial Reedy cofibration.

The dual proof for a trivial Reedy fibration $g:Y\to X$ uses Lemma \ref{RLP2} instead of Lemma \ref{LLP2}.\end{proof}

\section{Skeleta and coskeleta.}

In this section we define the skeletal filtration and the coskeletal
tower of any functor $X:\RR\to\EE$ on a generalized Reedy category
$\RR$, and study their interaction with the Reedy model structure on
$\EE^\RR$ for a Quillen model category $\EE$. We then introduce a
special class of dualizable generalized Reedy categories for which
the skeleta in $\Sets^{\RR^\op}$ are simple to describe.\vspace{1ex}

Recall that for any object $X$ of $\EE^\RR$, the restriction $j_n^*X:\GG_n(\RR)\to\EE$ along $j_n:\GG_n(\RR)\to\RR$ is denoted $X_n$. We shall write $\,t_n:\RR_{\leq n}\inc\RR$ for the full embedding of the subcategory of objects of degree $\leq n$, cf. Section \ref{groupoiddef}.

\begin{dfn} The \emph{$n$-skeleton functor} is the endofunctor $\sk_n=t_{n!}t_n^*$. The \emph{$n$-coskeleton functor} is the endofunctor $\cosk_n=t_{n*}t_n^*$.\end{dfn}

Since $t_n:\RR_{\leq n}\inc\RR$ is a full embedding, the unit of the
$(t_{n!},t_n^*)$-adjunction  (resp. the counit of the
$(t_n^*,t_{n*})$-adjunction) is an isomorphism; in other words, the
endofunctor $\sk_n$ (resp. $\cosk_n$) is an \emph{idempotent
comonad} (resp. \emph{monad}) on $\EE^\RR$.

The counit of the $(t_{n!},t_n^*)$-adjunction (resp. unit of the $(t_n^*,t_{n*})$-adjunction) provides for each object $X$ of $\EE^\RR$ a map $\sk_n(X)\to X$ (resp. $X\to\cosk_n(X)$) in $\EE^\RR$. Observe however that these maps need not be monic (resp. epic) for general $X$.

For consistency, $\sk_{-1}(X)$ (resp. $\cosk_{-1}(X)$) will denote an initial (resp. terminal) object of $\EE^\RR$.

\begin{lma}\label{latchingdef2}For each object $X$ of $\EE^\RR$, the $n$-th latching object $L_n(X)$ is canonically isomorphic to $\sk_{n-1}(X)_n$, and the $n$-th matching object $M_n(X)$ is canonically isomorphic to $\cosk_{n-1}(X)_n$.

Under these isomorphisms, the $n$-th latching map $L_n(X)\to X_n$ is induced by the counit $\sk_{n-1}(X)\to X$, and the $n$-th matching map $X_n\to M_n(X)$ is induced by the unit $X\to\cosk_{n-1}(X)$.\end{lma}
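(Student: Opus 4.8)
The plan is to prove the latching statement pointwise and then invoke formal duality for the matching statement. Fix an object $X$ of $\EE^\RR$ and an object $r$ of $\RR$ with $d(r)=n$. Using the pointwise formula for left Kan extension along the full embedding $t_{n-1}:\RR_{\leq n-1}\inc\RR$, I would first record
$$\sk_{n-1}(X)_r=(t_{(n-1)!}t_{n-1}^*X)_r=\varinjlim_{t_{n-1}/r}X_s,$$
the colimit being over the comma category $t_{n-1}/r$ whose objects are pairs $(s,u\colon s\to r)$ with $d(s)\leq n-1$ and whose morphisms $(s,u)\to(s',u')$ are the maps $f\colon s\to s'$ with $u'f=u$. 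On the other hand, $L_n(X)_r=\varinjlim_{\RR^+(r)}X_s$ by Section \ref{latchingdef}. By axiom (i) a non-invertible morphism of $\RR^+$ with codomain $r$ raises the degree, so its domain has degree $<n$; hence $\RR^+(r)$ is precisely the full subcategory of $t_{n-1}/r$ spanned by those $(s,u)$ for which $u$ is a non-invertible morphism of $\RR^+$.

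The heart of the argument is to show that the resulting full inclusion $\iota\colon\RR^+(r)\inc t_{n-1}/r$ is final, so that the two colimits agree. I would verify the comma-category criterion: for each object $(s,u)$ of $t_{n-1}/r$ the category $(s,u)/\iota$ is nonempty and connected. For nonemptiness, apply the factorization axiom (iii) to write $u=gh$ with $g\in\RR^+$ and $h\in\RR^-$; a degree count (if $g$ were invertible then the intermediate object would have degree $n$, contradicting $d(s)\leq n-1$ via $h\in\RR^-$) shows $g\colon t\to r$ is non-invertible, so $(t,g)\in\RR^+(r)$ and $h$ furnishes a morphism $(s,u)\to\iota(t,g)$, i.e. an object of $(s,u)/\iota$. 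For connectedness, given any object $((s',g'),f)$ of $(s,u)/\iota$, I would factor $f=g''h''$ with $g''\in\RR^+$, $h''\in\RR^-$; then $u=(g'g'')h''$ is a second $\RR^+\RR^-$-factorization, so by the uniqueness-up-to-isomorphism clause of axiom (iii) there is an isomorphism $\psi$ of the intermediate objects with $(g'g'')\psi=g$ and $\psi h=h''$, and $w=g''\psi$ defines a morphism $(t,g)\to(s',g')$ in $\RR^+(r)$ with $g'w=g$ and $wh=f$, i.e. a morphism in $(s,u)/\iota$ from the canonical object to $((s',g'),f)$. Thus every object is connected to the canonical one, and finality yields a canonical isomorphism $L_n(X)_r\cong\sk_{n-1}(X)_r$.

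It remains to package these pointwise isomorphisms. Since $\iota$ and the structure maps $X_u\colon X_s\to X_r$ are natural in $r$ with respect to isomorphisms of $\GG_n(\RR)$ (an automorphism of $r$ acting compatibly on both indexing categories), they assemble into an isomorphism $L_n(X)\cong\sk_{n-1}(X)_n$ in $\EE^{\GG_n(\RR)}$, natural in $X$. Under it the latching map $L_n(X)\to X_n$, induced by the cocone maps $X_u$, is carried to the map induced by the counit $\sk_{n-1}(X)\to X$, because that counit is by definition the comparison of $\varinjlim_{t_{n-1}/r}X_s$ with $X_r$ through the very same maps $X_u$. The matching statement then follows by the formally dual argument: the pointwise right Kan extension formula identifies $\cosk_{n-1}(X)_r$ with a limit over $r/t_{n-1}$, one recognizes $\RR^-(r)$ as the full subcategory on non-invertible $\RR^-$-morphisms out of $r$, and the inclusion is shown to be initial by the dual factorization-and-degree argument, with the matching map corresponding to the unit $X\to\cosk_{n-1}(X)$.

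The step I expect to be the main obstacle is the finality verification, and within it the connectedness of the comma categories $(s,u)/\iota$; this is exactly where the full strength of axiom (iii), namely both the existence and the essential uniqueness of the $\RR^+\RR^-$-factorization, is genuinely used, whereas the degree bookkeeping and the naturality packaging are comparatively routine.
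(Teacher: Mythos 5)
Your proposal is correct and takes essentially the same route as the paper's own (much terser) proof, which likewise reduces everything to the assertion, derived from axiom (iii), that $\RR^+(r)$ is cofinal in the comma category $\RR_{\leq n-1}/r$ (and dually that $\RR^-(r)$ is initial in $r/\RR_{\leq n-1}$). Your nonemptiness-and-connectedness verification via the existence and essential uniqueness of the $\RR^+\RR^-$-factorization simply fills in the details the paper leaves implicit.
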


\begin{proof}This follows from the explicit formulas for the left and right Kan extensions $t_{n!}$ and $t_{n*}$ (cf. Section 3), and from axiom (iii) of Definition \ref{basic}. Indeed, the latter implies that for any object $r$ of $\RR$, the category $\RR^+(r)$ is cofinal in the comma category $\RR_{\leq n}/r$, while the category $\RR^-(r)$ is final in the comma category $r/\RR_{\leq n}$. Moreover, the latching map $L_n(X)\to X_n$ of Section \ref{latchingdef} factors canonically through the counit $\sk_{n-1}(X)_n\to X_n$, while the matching map $X_n\to M_n(X)$ of Section \ref{matchingdef} factors canonically through the unit $X_n\to\cosk_{n-1}(X)_n$.\end{proof}

\begin{lma}\label{idempotent}For any natural numbers $m\leq n$, there are canonical isomorphisms $\sk_n\circ\sk_m\cong\sk_m\cong\sk_m\circ\sk_n$ as well as $\cosk_n\circ\cosk_m\cong\cosk_m\cong\cosk_m\circ\cosk_n$.\end{lma}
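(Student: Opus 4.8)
The plan is to reduce everything to the pseudofunctoriality of Kan extensions together with the full-faithfulness of the truncation embeddings, so that all four isomorphisms become short formal manipulations. For $m\leq n$ write $s:\RR_{\leq m}\inc\RR_{\leq n}$ for the full embedding, so that $t_m=t_n\circ s$ as functors into $\RR$. First I would record three facts. Restriction is strictly functorial, so $t_m^*=s^*t_n^*$. By uniqueness of adjoints, left and right Kan extension are pseudofunctorial along this composite, yielding canonical isomorphisms $t_{m!}\cong t_{n!}s_!$ and $t_{m*}\cong t_{n*}s_*$. Finally, since $t_n$ is a full embedding, the unit $\mathrm{id}\cong t_n^*t_{n!}$ and the counit $t_n^*t_{n*}\cong\mathrm{id}$ are isomorphisms, exactly as already noted for $\sk_n$ and $\cosk_n$ in the discussion preceding Lemma \ref{latchingdef2}.

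With these in hand the two skeletal isomorphisms are immediate computations. For $\sk_m\circ\sk_n=t_{m!}t_m^*t_{n!}t_n^*$ I would simplify the inner composite using $t_m^*t_{n!}=s^*t_n^*t_{n!}\cong s^*$, whence $\sk_m\circ\sk_n\cong t_{m!}s^*t_n^*=t_{m!}t_m^*=\sk_m$. For $\sk_n\circ\sk_m=t_{n!}t_n^*t_{m!}t_m^*$ I would instead use $t_n^*t_{m!}\cong t_n^*t_{n!}s_!\cong s_!$, so that $\sk_n\circ\sk_m\cong t_{n!}s_!t_m^*=t_{m!}t_m^*=\sk_m$. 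In each case the only inputs are the pseudofunctoriality isomorphism and the full-faithfulness of $t_n$.

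The coskeletal statements follow by the formally dual computation, replacing the left Kan extensions $t_{m!},t_{n!},s_!$ by the right Kan extensions $t_{m*},t_{n*},s_*$ and the unit isomorphism $\mathrm{id}\cong t_n^*t_{n!}$ by the counit isomorphism $t_n^*t_{n*}\cong\mathrm{id}$. Concretely, $t_m^*t_{n*}=s^*t_n^*t_{n*}\cong s^*$ gives $\cosk_m\circ\cosk_n\cong t_{m*}t_m^*=\cosk_m$, while $t_n^*t_{m*}\cong t_n^*t_{n*}s_*\cong s_*$ gives $\cosk_n\circ\cosk_m\cong t_{m*}t_m^*=\cosk_m$.

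I expect no serious obstacle: the content is purely bookkeeping of canonical isomorphisms. The one point deserving care is that the isomorphisms produced should be canonical rather than merely abstract, which is ensured because every step uses only the canonical comparison maps for Kan extensions and the canonical units and counits, all of which are natural. Conceptually these identities simply express that $\sk_m(X)$ is already $n$-skeletal for $n\geq m$, and dually that $\cosk_m(X)$ is $n$-coskeletal, so that applying the higher (co)skeleton in either order changes nothing.
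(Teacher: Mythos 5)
Your proof is correct and is essentially the paper's argument made explicit: the paper's one-line proof appeals to $\sk_n$ (resp. $\cosk_n$) being an idempotent comonad (resp. monad) — i.e.\ the unit $\mathrm{id}\cong t_n^*t_{n!}$ and counit $t_n^*t_{n*}\cong\mathrm{id}$ being isomorphisms for the full embedding $t_n$ — which is exactly what you use, together with the evident factorization $t_m=t_n\circ s$ and pseudofunctoriality of Kan extensions. Your write-up simply spells out the bookkeeping the paper leaves implicit, and all four computations check out.
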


\begin{proof}This follows readily from the fact that $\sk_n$ (resp. $\cosk_n$) is an idempotent comonad (resp. monad) on $\EE^\RR$.\end{proof}

Lemma \ref{idempotent} implies in particular the existence of a compatible system of maps $\sk_m\to\sk_n$ (resp. $\cosk_n\to\cosk_m$) in $\EE^\RR$. The colimit $\sk_\infty$ (resp. limit $\cosk_\infty$) of this system is isomorphic to the identity functor of $\EE^\RR$. We shall now discuss for which objects $X$ of $\EE^\RR$, this defines a skeletal filtration (resp. coskeletal tower).

Recall that a functor between Quillen model categories is called a \emph{left} (resp. \emph{right}) \emph{Quillen functor} if it preserves cofibrations and trivial cofibrations (resp. fibrations and trivial fibrations).

\begin{lma}\label{Quillen}Let $\EE$ be a Quillen model category and let $\RR$ be a generalized Reedy category. We endow $\,\EE^\RR$ and $\,\EE^{\RR_{\leq n}}$ with their Reedy model structures. Then,\begin{enumerate}\item[(i)]the left Kan extension $t_{n!}$ is a left Quillen functor;\item[(ii)]the right Kan extension $t_{n*}$ is a right Quillen functor;\item[(iii)]the restriction functor $\,t_n^*$ is simultaneously a left and right Quillen functor.\end{enumerate}In particular, $\sk_n$ (resp. $\cosk_n$) is a left (resp. right) Quillen endofunctor of $\EE^\RR$.\end{lma}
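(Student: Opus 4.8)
The plan is to establish claims (i)--(iii) for the restriction and Kan extension adjunctions, and then deduce the final statement about $\sk_n$ and $\cosk_n$ formally. The statement is self-dual: (ii) is the dual of (i), so I would prove (i) in detail and obtain (ii) by passing to $\RR^\op$. Claim (iii) is the crux, since both directions of (iii) feed into the proofs of (i) and (ii).

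\emph{Proof of (iii).} First I would show that $t_n^*:\EE^\RR\to\EE^{\RR_{\leq n}}$ preserves fibrations and trivial fibrations (right Quillen) as well as cofibrations and trivial cofibrations (left Quillen). The key observation is that matching and latching objects are compatible with the embedding $t_n:\RR_{\leq n}\inc\RR$. Precisely, for an object $s$ of $\RR_{\leq n}$, the category $\RR^-(s)$ computed in $\RR_{\leq n}$ coincides with the one computed in $\RR$, because any non-invertible $\RR^-$-morphism out of $s$ lowers degree and hence stays within $\RR_{\leq n}$; dually for $\RR^+(s)$. This is exactly the content of Lemma~\ref{matching}(ii) and Lemma~\ref{latching}(ii), which give canonical isomorphisms $M_k(t_n^*X)\cong M_k(X)$ and $L_k(t_n^*X)\cong L_k(X)$ for all $k\leq n$ (after restricting along the inclusion of groupoids $\GG_k(\RR_{\leq n})\cong\GG_k(\RR)$). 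Consequently the relative latching and matching maps of $t_n^*(f)$ in degree $k\leq n$ are identified with those of $f$ in the same degree, so $t_n^*$ preserves each of the four distinguished classes by the degreewise criterion for Reedy (trivial) (co)fibrations. This proves (iii).

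\emph{Proof of (i) and (ii).} Since $t_n^*$ is right Quillen by (iii), its left adjoint $t_{n!}$ is automatically left Quillen by the general adjunction principle: a left adjoint whose right adjoint preserves fibrations and trivial fibrations must itself preserve cofibrations and trivial cofibrations, these being characterized by left lifting against (trivial) fibrations. This gives (i) with no further work. Dually, since $t_n^*$ is left Quillen by (iii), its right adjoint $t_{n*}$ is right Quillen, giving (ii). \emph{The main obstacle} is really the bookkeeping in (iii): one must check carefully that the cofinality/finality statements of Lemmas~\ref{latching} and~\ref{matching} apply to the full embedding $\RR_{\leq n}\inc\RR$, i.e. that passing to the subcategory of objects of degree $\leq n$ does not alter the relevant latching and matching categories in degrees $\leq n$, and that the groupoids $\GG_k(\RR_{\leq n})$ and $\GG_k(\RR)$ agree for $k\leq n$ so that the ambient model categories $\EE^{\GG_k(-)}$ match up.

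\emph{The final statement.} The compositions now follow at once. As $\sk_n=t_{n!}t_n^*$ is a composite of the left Quillen functors $t_{n!}$ (by (i)) and $t_n^*$ (by (iii)), it is a left Quillen endofunctor of $\EE^\RR$; dually $\cosk_n=t_{n*}t_n^*$ is a composite of the right Quillen functors $t_{n*}$ (by (ii)) and $t_n^*$ (by (iii)), hence a right Quillen endofunctor.
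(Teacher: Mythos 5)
Your proof is correct and takes essentially the same approach as the paper: the paper's own (very terse) proof likewise observes that by adjointness (iii) is equivalent to the conjunction of (i) and (ii), and then deduces (iii) from the compatibility of latching and matching objects with the embedding $t_n:\RR_{\leq n}\inc\RR$ (Lemmas \ref{latching}(ii) and \ref{matching}(ii)) together with the characterization of trivial Reedy (co)fibrations via relative latching/matching maps (Proposition \ref{trivial}). Your explicit verification that the latching/matching categories and the groupoids $\GG_k$ are unchanged in degrees $k\leq n$ is precisely the content those cited lemmas encapsulate.
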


\begin{proof}By adjointness, (iii) is equivalent to the conjunction of (i) and (ii). Property (iii) follows from Proposition \ref{trivial} and Lemmas \ref{latching} and \ref{matching}.\end{proof}

\begin{prp}Let $\EE$ be a Quillen model category and let $\RR$ be a generalized Reedy category. For any $\,m<n\leq\infty$, and any object $X$ of $\,\EE^\RR$,\begin{enumerate}\item[(i)]if $X$ is Reedy cofibrant, the canonical map $\sk_m(X)\to\sk_n(X)$ is a Reedy cofibration between Reedy cofibrant objects;\item[(ii)]if $X$ is Reedy fibrant, the canonical map $\cosk_n(X)\to\cosk_m(X)$ is a Reedy fibration between Reedy fibrant objects.\end{enumerate}\end{prp}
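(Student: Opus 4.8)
The plan is to prove (i) and deduce (ii) by duality, reducing everything to the behavior of the skeleton functors applied to a Reedy cofibrant object $X$. First I would recall from Lemma \ref{Quillen} that $\sk_n=t_{n!}t_n^*$ is a left Quillen endofunctor of $\EE^\RR$; since left Quillen functors preserve cofibrant objects, each $\sk_n(X)$ is Reedy cofibrant as soon as $X$ is, which disposes of the ``cofibrant objects'' half of the claim at once (using that the initial object $\sk_{-1}(X)$ is trivially cofibrant for the case $m=-1$). So the real content is that the comparison map $\sk_m(X)\to\sk_n(X)$ is a Reedy \emph{cofibration}.

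To establish that, my strategy is to check the defining relative-latching criterion degreewise, using Lemma \ref{latchingdef2}, which identifies $L_k(Y)$ with $\sk_{k-1}(Y)_k$ and the latching map $L_k(Y)\to Y_k$ with the degree-$k$ component of the counit $\sk_{k-1}(Y)\to Y$. The key computational input is the idempotency from Lemma \ref{idempotent}: for $k\leq m$ one has $\sk_m(X)$ and $\sk_n(X)$ agreeing through degree $k$ (indeed $t_k^*\sk_m\cong t_k^*\sk_n\cong t_k^*$ once $k\leq m\leq n$), so the relative latching map $\sk_m(X)_k\cup_{L_k(\sk_m X)}L_k(\sk_n X)\to \sk_n(X)_k$ is an isomorphism in these low degrees, hence a cofibration. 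In the range $m<k\leq n$ the object $\sk_m(X)$ has already ``stabilized'' below degree $k$, so $L_k(\sk_m X)\cong L_k(\sk_n X)$ and $\sk_m(X)_k\cong L_k(\sk_m X)$ itself, whence the relative latching map becomes $L_k(\sk_n X)\to \sk_n(X)_k$, which is exactly the absolute latching map of the cofibrant object $\sk_n(X)$ and therefore a cofibration in $\EE^{\GG_k(\RR)}$. For $k>n$ the map $\sk_m(X)\to\sk_n(X)$ is again an iso on latching data by idempotency. Assembling these cases gives that every relative latching map is a cofibration, i.e.\ $\sk_m(X)\to\sk_n(X)$ is a Reedy cofibration.

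The main obstacle I anticipate is bookkeeping the idempotency isomorphisms cleanly across the three degree-ranges, and in particular making the degree-$k$ identification $\sk_m(X)_k\cong L_k(\sk_m X)$ for $m<k$ precise: this is where one genuinely uses that $\sk_m(X)$ is concentrated (as a left Kan extension from $\RR_{\leq m}$) so that its degree-$k$ value \emph{is} its latching object, via Lemma \ref{latchingdef2}. One must also handle the colimit case $n=\infty$, where $\sk_\infty\cong\mathrm{id}$; here I would note that $\sk_m(X)\to X$ is the colimit of the tower $\sk_m(X)\to\sk_{m+1}(X)\to\cdots$, each stage a Reedy cofibration between cofibrant objects by the finite case, and that Reedy cofibrations between cofibrant objects are closed under such sequential composition (transfinite composition of cofibrations in each $\EE^{\GG_k(\RR)}$, which stabilizes in each fixed degree $k$ after finitely many steps).

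Finally, part (ii) follows by the formal duality already exploited throughout Section 5: $\cosk_n=t_{n*}t_n^*$ is a right Quillen endofunctor by Lemma \ref{Quillen}, so it preserves Reedy fibrant objects, and the matching-object description in Lemma \ref{latchingdef2} together with Lemma \ref{idempotent} runs the identical argument with latching maps replaced by matching maps and cofibrations replaced by fibrations, showing $\cosk_n(X)\to\cosk_m(X)$ is a Reedy fibration between Reedy fibrant objects (with the limit case $n=\infty$ handled by a dual sequential-tower argument, $\cosk_\infty\cong\mathrm{id}$).
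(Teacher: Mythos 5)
Your argument is correct in substance and runs on exactly the same ingredients as the paper's proof: Lemma \ref{Quillen} for preservation of cofibrant objects, the identifications of Lemmas \ref{latchingdef2} and \ref{idempotent}, a degreewise analysis of relative latching maps, and duality for part (ii). The organization differs, though. The paper first reduces to the single case $n=\infty$: once one knows that $\sk_m(Y)\to Y$ is a Reedy cofibration for every Reedy cofibrant $Y$, applying this to $Y=\sk_n(X)$ (Reedy cofibrant by Lemma \ref{Quillen}) and using $\sk_m\circ\sk_n\cong\sk_m$ from Lemma \ref{idempotent} gives the general map $\sk_m(X)\to\sk_n(X)$. This reduction collapses your three-case split into two cases (for $k\leq m$ the relative latching map is an isomorphism; for $k>m$ it coincides with the absolute latching map $L_k(X)\to X_k$, a cofibration since $X$ is Reedy cofibrant) and, since the two-case argument applies verbatim to $\sk_m(X)\to\sk_\infty(X)=X$, it makes your separate sequential-colimit treatment of $n=\infty$ unnecessary. (If you do keep that tower argument, it is cleaner to invoke closure of cofibrations under sequential composition in the model structure of Theorem \ref{model} than to recheck relative latching maps along the composite, although your stabilization remark $\sk_j(X)_k\cong X_k$ for $j\geq k$ is correct.)

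One local flaw you should strike: in the range $m<k\leq n$, your claim that $L_k(\sk_m X)\cong L_k(\sk_n X)$ is false in general. By Lemmas \ref{latchingdef2} and \ref{idempotent} one has $L_k(\sk_m X)\cong\sk_{k-1}(\sk_m X)_k\cong\sk_m(X)_k$, whereas $L_k(\sk_n X)\cong\sk_{k-1}(X)_k\cong L_k(X)$; these differ as soon as $m<k-1$ (e.g.\ for simplicial sets with $m=0$ and $k=n=2$, where $X$ has a nondegenerate $1$-simplex). Fortunately the claim is never used: your conclusion that the relative latching map reduces to the absolute latching map $L_k(\sk_n X)\to\sk_n(X)_k$ follows from the other isomorphism $L_k(\sk_m X)\cong\sk_m(X)_k$ alone, since pushing out along an isomorphism yields an isomorphism. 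With that sentence deleted, your proof is sound, and is essentially the paper's argument carried out without the preliminary reduction to $n=\infty$.
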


\begin{proof}The proofs of (i) and (ii) are dual; we shall establish (i). By Lemma \ref{idempotent}, we can stick to the case $n=\infty$, i.e. to the case $\sk_m(X)\to\sk_\infty(X)=X$. For this, consider the commutative square:\begin{diagram}[small]L_k(\sk_m(X))&\rTo&L_k(X)\\\dTo&&\dTo\\\sk_m(X)_k&\rTo&X_k.\end{diagram}For $k\leq m$, the horizontal maps are isomorphisms, thus the relative latching map $\sk_m(X)_k\cup_{L_k(\sk_m(X))}L_k(X)\to X_k$ is an isomorphism too. For $k>m$, the left vertical map is an isomorphism by Lemmas \ref{latchingdef2} and \ref{idempotent}, thus the relative latching map coincides with $L_k(X)\to X_k$ which is a cofibration by hypothesis. Moreover, Lemma \ref{Quillen} shows that $\sk_m(X)$ is Reedy cofibrant.\end{proof}

We shall now introduce a special class of generalized Reedy
categories $\RR$ for which the skeletal filtration in
$\Sets^{\RR^\op}$ admits a particularly simple description, as in
Corollary \ref{skeleton} below.  In the particular case of the
simplex category $\Delta$, this proposition was first observed by
Eilenberg and Zilber (see \cite{EZ,GZ}), and therefore we have
chosen to name these special categories \emph{Eilenberg-Zilber
categories}, or briefly EZ-categories. Their formal definition is
the following:

\begin{dfn}\label{Eilenberg}An \emph{EZ-category} is a small category $\RR$, equipped with a degree-function $d:\Ob(\RR)\to\NN$, such that\begin{enumerate}\item[(i)]monomorphisms preserve (resp. raise) the degree if and only if they are invertible (resp. non-invertible);\item[(ii)]every morphism factors as a split epimorphism followed by a monomorhism;\item[(iii)]any pair of split epimorphisms with common domain has an absolute pushout.\end{enumerate}\end{dfn}

\noindent Any EZ-category is a \emph{dualizable} generalized Reedy
category where $\RR^+$ (resp. $\RR^-$) is defined to be the wide
subcategory containing all monomorphisms (resp. split epimorphisms).
Notice however that, although the dual of an EZ-category is a
generalized Reedy category, it is in general \emph{not} an
EZ-category. We are mostly interested in \emph{presheaves} on $\RR$,
so that the reader should be aware of the fact that the roles of
$\RR^\pm$ have to be reversed in the definitions of Sections 4-6.

Recall (cf. \cite{Bo}) that an \emph{absolute pushout} is a pushout
which is preserved by any functor or, equivalently, just by the
\emph{Yoneda-embedding} $\RR\inc\wRR=\Sets^{\RR^\op}$. Notice also
that any epimorphism between representable presheaves is split.
Axiom (ii) expresses thus that the epi-mono factorization system of
the presheaf topos $\wRR$ restricts (under the Yoneda-embedding) to
$\RR$, while axiom (iii) can be restated as follows: in $\,\wRR$,
the pushout of any pair of representable epimorphisms with common
domain is representable. This in turn means that for any
representable presheaf, the equivalence relation generated by two
``representable'' equivalence relations is again ``representable''.
For instance, in the simplex category $\Delta$, representable
quotients of $\Delta[m]$ correspond bijectively to ordered
partitions of $[m]$; one checks that the representable quotients
form a \emph{sublattice} of the entire quotient-lattice of
$\Delta[m]$.

The presheaf $\RR(-,r)$ represented by an object $r$ of $\RR$ will
be denoted $\RR[r]$. The split epimorphisms of an EZ-category will
be called \emph{degeneracy operators}; the monomorphisms will be
called \emph{face operators}.

Examples of EZ-categories include the simplex category $\Delta$,
Segal's category $\Gamma$, the category $\Delta_{sym}$ (see examples
\ref{basicexamples}.a-c), as well as the total category $\RR G$ of a
crossed group $G$ on a strict EZ-category $\RR$ (e.g., the category
$\Lambda$ for cyclic sets, resp. the category $\Omega$ for
dendroidal sets, see examples \ref{cyclic} and \ref{omega}). Indeed,
Proposition \ref{crossed2} shows that $\RR G$ is a dualizable
generalized Reedy category in which axiom (ii) of an EZ-category
holds; moreover, the restriction functor $\widehat{\RR G}\to\wRR$ is
monadic and hence creates absolute pushouts, so that axiom (iii) of
an EZ-category also holds.

Recall that the Yoneda-lemma allows us to identify \emph{elements}
of a set-valued presheaf $X$ on $\RR$ with \emph{maps}
$\,x:\RR[r]\to X$ in $\wRR$; such a map (or element) $x$ will be
called \emph{degenerate} if $x$ factors through a non-invertible
degeneracy $\RR[r]\to\RR[s]$, and \emph{non-degenerate} otherwise.

\begin{prp}\label{standard}Let $\,\RR$ be an EZ-category and let $X$ be a presheaf on $\RR$. Then any element $x:\RR[r]\to X$ factors in an essentially unique way as a degeneracy $\,\rho_x:\RR[r]\onto\RR[s]$ followed by a non-degenerate element $\,\sg_x:\RR[s]\to X$.\end{prp}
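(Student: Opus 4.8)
The plan is to prove existence and essential uniqueness separately: existence by minimizing the degree of the target of the degeneracy, and uniqueness by combining the absolute-pushout axiom (iii) of an EZ-category with the observation, recorded after Definition \ref{Eilenberg}, that every epimorphism between representable presheaves is split. Throughout I identify, via the full faithfulness of the Yoneda-embedding $\RR\inc\wRR$, a degeneracy $\RR[r]\onto\RR[s]$ with the image of a split epimorphism $\rho:r\onto s$ of $\RR$.

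For existence I would consider the set of all factorizations $x=y\circ\RR[\rho]$ with $\rho:r\onto s$ a split epimorphism and $y:\RR[s]\to X$ an element; this set is nonempty, since $x=x\circ\mathrm{id}$ is one such factorization. As degrees lie in the well-ordered set $\NN$, I may choose a factorization for which $d(s)$ is minimal, and I claim that the corresponding $y$ is then non-degenerate. Indeed, if $y$ factored through a non-invertible degeneracy $\tau:s\onto t$, say $y=z\circ\RR[\tau]$, then $\tau\rho:r\onto t$ would again be a split epimorphism (split epimorphisms compose) with $d(t)<d(s)$, so that $x=z\circ\RR[\tau\rho]$ would violate minimality. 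Setting $\rho_x=\RR[\rho]$ and $\sg_x=y$ yields the desired factorization.

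For uniqueness, suppose $x=\sg_x\circ\RR[\rho]=\sg_x'\circ\RR[\rho']$ for split epimorphisms $\rho:r\onto s$, $\rho':r\onto s'$ and non-degenerate elements $\sg_x,\sg_x'$. Since $\rho$ and $\rho'$ have common domain, axiom (iii) provides an absolute pushout $t$ with legs $p:s\to t$, $q:s'\to t$ satisfying $p\rho=q\rho'$. Being absolute, this pushout is preserved by the Yoneda-embedding, so its image is a pushout square in $\wRR$; as $\sg_x$ and $\sg_x'$ agree after precomposition with $\RR[\rho]$ and $\RR[\rho']$, they induce a unique $w:\RR[t]\to X$ with $w\circ\RR[p]=\sg_x$ and $w\circ\RR[q]=\sg_x'$.

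The main obstacle---and the heart of the argument---is to show that $p$ and $q$ are invertible. Here axiom (iii) combines with the special structure of the presheaf topos: $\RR[\rho]$ and $\RR[\rho']$ are epimorphisms (split epimorphisms being preserved by any functor), and epimorphisms are stable under pushout in $\wRR$, so $\RR[p]$ and $\RR[q]$ are epimorphisms between representables, hence split by the noted fact (itself a consequence of axiom (ii)); thus $p$ and $q$ are degeneracies. If $p$ were non-invertible, then $\sg_x=w\circ\RR[p]$ would be degenerate, contrary to hypothesis; hence $p$, and likewise $q$, is an isomorphism. Then $\theta=q^{-1}p:s\to s'$ is an isomorphism with $\theta\rho=\rho'$ and, from $p=q\theta$, also $\sg_x=\sg_x'\circ\RR[\theta]$, which is exactly the asserted essential uniqueness of the factorization.
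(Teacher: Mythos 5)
Your proof is correct and takes essentially the same route as the paper: existence by minimizing the degree of the target of the degeneracy, and essential uniqueness by forming the absolute pushout of the two degeneracies, inducing a map $\RR[t]\to X$, and concluding that the legs $p,q$ are invertible degeneracies because $\sg_x$ and $\sg'_x$ are non-degenerate. The only ingredient the paper records that you omit is that the comparison isomorphism is moreover \emph{unique} (since degeneracies are epic, any $\theta$ with $\theta\rho=\rho'$ is determined); adding that one-line remark makes your argument match the paper's in full.
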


Any such decomposition will be referred to as a \emph{standard decomposition} of $\,x$.

\begin{proof}The existence of a standard decomposition follows from the facts that the degree-function takes values in $\NN$, and that non-invertible degeneracies lower the degree by \ref{Eilenberg}(i). For the essential uniqueness, observe first that there can be at most one comparison map from a standard decomposition $x=\sg_x\rho_x$ to another $x=\sg'_x\rho'_x$, since degeneracies are epic. It remains to be shown that such a comparison map always
exists. Take the absolute pushout of $\rho_x$ and $\rho'_x$, as
provided by \ref{Eilenberg}(iii):
\begin{diagram}[small]\RR[r]&\rTo^{\rho_x}&\RR[s]\\\dTo^{\rho'_x}&&\dTo_{\tau_x}\\\RR[s']&\rTo_{\tau'_x}&\RR[t]\end{diagram}
There exists therefore a map $\phi_x:\RR[t]\to X$ such that
$\phi_x\tau_x=\sg_x$ and $\phi_x\tau'_x=\sg'_x$. Since $\sg_x$ and
$\sg'_x$ are non-degenerate, the split epimorphisms $\tau_x$ and
$\tau'_x$ must preserve the degree. It then follows from
\ref{Eilenberg}(i) that $\tau_x$ and $\tau'_x$ are invertible so that
$(\tau'_x)^{-1}\tau_x$ provides the required comparison
map.\end{proof}


\begin{cor}\label{skeleton}Let $\,\RR$ be an EZ-category and let $X$ be a set-valued presheaf on $\RR$. Then the counit $sk_n(X)\to X$ is monic and its image is the subobject $X^{(n)}$ of those elements of $X$ which factor through an element $\RR[s]\to X$ of degree $s\leq n$.\end{cor}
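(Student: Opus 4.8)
The plan is to use the standard decomposition of Proposition~\ref{standard} to identify $\sk_n(X)$ explicitly. First I would recall that, by definition, $\sk_n=t_{n!}t_n^*$ where $t_n:\RR_{\leq n}\inc\RR$, and that the counit $\sk_n(X)\to X$ is computed pointwise by the left Kan extension formula of Section~3: for an object $r$ of $\RR$,
\begin{align*}\sk_n(X)(r)&=\varinjlim_{(s,\,u:\RR[r]\to\RR[s])}X(s),\end{align*}
the colimit taken over the comma category whose objects are maps $u:r\to s$ (in $\wRR$, equivalently $\RR[r]\to\RR[s]$) with $d(s)\leq n$. Since we work with presheaves, the roles of $\RR^\pm$ are reversed as warned in the excerpt, so degeneracies are the relevant degree-lowering maps. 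The target subobject $X^{(n)}\subseteq X$ is the union of the images of all maps $X(\RR[s]\to X)$ with $d(s)\leq n$; concretely, an element $x:\RR[r]\to X$ lies in $X^{(n)}(r)$ precisely when its standard decomposition $x=\sg_x\rho_x$ has $d(s)\leq n$, where $\sg_x:\RR[s]\to X$ is the non-degenerate part.

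The key steps are then as follows. First I would show the counit factors through $X^{(n)}$: every element in the image of $\sk_n(X)\to X$ comes, via the colimit formula, from some $X(s)$ with $d(s)\leq n$, hence factors through an element of degree $\leq n$, so lands in $X^{(n)}$. Second, I would show the map $\sk_n(X)\to X^{(n)}$ is surjective pointwise: given $x\in X^{(n)}(r)$, its standard decomposition $x=\sg_x\rho_x$ with $d(s)\leq n$ exhibits $x$ as the image of $\sg_x$ under the structure map of the colimit indexed by $\rho_x:\RR[r]\to\RR[s]$, so $x$ is in the image. Third, and this is the heart, I would show the counit is monic, i.e. injective on each $\sk_n(X)(r)$. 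Here Proposition~\ref{standard} does the real work: two cocone generators $(s,u)$ and $(s',u')$ in the comma category, carrying elements mapping to the same $x\in X(r)$, can be compared by taking standard decompositions and using the essential uniqueness to produce the required identification in the colimit. The absolute pushout from axiom~\ref{Eilenberg}(iii), which drives the proof of Proposition~\ref{standard}, is exactly what guarantees that any two such generators become identified, so the colimit collapses onto the set of non-degenerate representatives of degree $\leq n$.

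Concretely, I expect the cleanest route is to prove directly that the colimit defining $\sk_n(X)(r)$ is naturally in bijection with the set of standard decompositions $x=\sg_x\rho_x$ with $d(\mathrm{codom}\,\rho_x)\leq n$, which by essential uniqueness is the same as the set $X^{(n)}(r)$ of elements whose non-degenerate part has degree $\leq n$. The forward map sends a generator $(s,u,\,t:\RR[s]\to X)$ to the composite $t\circ u:\RR[r]\to X$, and the standard decomposition of this composite (together with uniqueness) shows both well-definedness and injectivity on colimit-equivalence classes.

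The main obstacle will be the injectivity (monomorphism) step: I must verify that distinct colimit classes give distinct elements of $X$, which amounts to showing that whenever two generators of the colimit map to the same element $x$, they are connected by a zig-zag in the comma category. This is where the essential uniqueness of the standard decomposition, and in particular the absolute-pushout axiom~\ref{Eilenberg}(iii), are indispensable; without axiom~\ref{Eilenberg}(iii) the comparison map between two standard decompositions need not exist, and the counit could fail to be monic (indeed the excerpt explicitly warns that for general $\RR$ the counit need not be monic). I would therefore organize the argument so that the bijection $\sk_n(X)(r)\cong X^{(n)}(r)$ follows formally once Proposition~\ref{standard} is invoked, reducing the entire claim to bookkeeping with standard decompositions.
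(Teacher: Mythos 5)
Your proposal is correct and takes essentially the same route as the paper's own proof: both compute $\sk_n(X)_r$ pointwise by the left Kan extension formula as a colimit over maps $r\to s$ with $d(s)\leq n$, observe that the counit factors through and surjects onto $X^{(n)}$, and deduce pointwise injectivity from the essential uniqueness of the standard decomposition of Proposition~\ref{standard}. Your extra detail on the zig-zag in the comma category and on the role of the absolute-pushout axiom merely makes explicit what the paper's terse injectivity step leaves implicit.
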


\begin{proof}Notice that the counit $\sk_n(X)\to X$ factors through $X^{(n)}$ since by definition, for each object $r$ of $\,\RR$, we have$$sk_n(X)_r=t_{n!}t_n^*(X)_r=\varinjlim_{r\to t_n(s)}X_s=\varinjlim_{r\to s,d(s)\leq n}X_s.$$The induced map $\sk_n(X)\to X^{(n)}$ is pointwise surjective. It remains to be shown that $\sk_n(X)\to X^{(n)}$ is pointwise injective. Take two elements $x,y$ in $\sk_n(X)$ giving rise to the same element $z$ in $X^{(n)}$. Then, the essential uniqueness of the standard decomposition of $z$ shows that $x$ and $y$ define the same element in $\sk_n(X)$.\end{proof}

\section{Monoidal Reedy model structures}\label{solid} From now on,
we shall assume that $\EE=(\EE,\otimes_\EE,I_\EE,\tau_\EE)$ is a
\emph{closed symmetric monoidal category}, see e.g. \cite{Bo}.
Observe that if arbitrary (small) coproducts of the unit object
$I_\EE$ exist, there is a canonical functor $\Sets\to\EE$ given by
$S\mapsto\coprod_SI_\EE$. The symmetric monoidal structure will be
called \emph{solid} if these coproducts exist, and if moreover the
resulting functor from the category of sets to $\EE$ is
\emph{faithful}. Objects and morphisms of $\EE$ which belong to the
essential image of this functor will be called \emph{discrete}.
Likewise, the presheaf topos $\wRR$ maps to $\,\EE^{\RR^\op}$.
Observe that both functors have right adjoints, and hence preserve
colimits.

Recall that, according to Hovey \cite{Ho}, a \emph{monoidal model
category} is a category which is simultaneously a closed symmetric
monoidal category and a Quillen model category such that \emph{unit}
and \emph{pushout-product} axioms hold. For brevity, we shall say
that a monoidal model category $\EE$ is \emph{solid} if

\begin{enumerate}\item[(i)]the symmetric monoidal structure is
\emph{solid} in the sense mentioned above;\item[(ii)]the unit
$I_\EE$ is \emph{cofibrant};\item[(iii)]for any discrete group $G$,
discrete cofibrations in $\EE^G$ are \emph{free
$G$-extensions}\footnote{A $G$-equivariant map of $G$-sets $f:A\to
B$ is a \emph{free $G$-extension} iff $f$ is monic and $G$ acts
freely on the complement $B\backslash f(A)$}.\end{enumerate}

\noindent Observe that condition (ii) makes the unit axiom
redundant, and condition (iii) (applied to the trivial group)
implies that discrete cofibrations in $\EE$ are monic. If $\EE$ is
\emph{cofibrantly generated} (cf. \cite{Hi,Ho}) and \emph{discrete
cofibrations} in $\EE$ are \emph{monic}, then condition (iii) is
automatically satisfied, since in this case the discrete
cofibrations in $\EE^G$ are generated by free $G$-extensions.
Examples of solid monoidal model categories include the category of
compactly generated spaces, the category of simplicial sets (both
equipped with Quillen's model structure), and the category of
differential graded $R$-modules with the projective model structure.

\subsection{Boundary inclusions and cofibrations}
For each object $r$ of an EZ-category $\RR$, the \emph{formal
boundary} $\partial\RR[r]$ of $\RR[r]$ is defined to be the
subobject of those elements of $\RR[r]$ which factor through a
non-invertible face operator $s\to r$.  By Corollary \ref{skeleton},
we have $\partial\RR[r]=\sk_{d(r)-1}\RR[r]$. Our main purpose here
is to single out a class of maps in $\wRR$ which induce \emph{Reedy
cofibrations} in $\EE^{\RR^\op}$ for \emph{any} solid monoidal model
category $\EE$. This class coincides with Cisinski's class of
\emph{normal monomorphisms}, see \cite[8.1.30]{Ci}.

\begin{prp}\label{discrete}For a map $\phi:X\to Y$ of set-valued presheaves on an EZ-category $\,\RR$, the following three properties are equivalent:\begin{enumerate}\item[(i)]for each object $r$ of $\,\RR$, the relative latching map $X_r\cup_{L_r(X)}L_r(Y)\to Y_r$ is a free $\Aut(r)$-extension;\item[(ii)]$\phi$ is monic, and for each object $\,r$ of $\,\RR$ and each non-degenerate element $\,y\in Y_r\backslash\phi(X)_r$, the isotropy group $\{g\in\Aut(r)\,|\,g^*(y)=y\}$ is trivial;\item[(iii)]for each $n\geq 0$, the relative $n$-skeleton $\sk_n(\phi)=X\cup_{\sk_n(X)}\sk_n(Y)$ is obtained from the relative $(n-1)$-skeleton $\sk_{n-1}(\phi)$ by attaching a coproduct of representable presheaves of degree $n$ along their formal boundary.\end{enumerate}\end{prp}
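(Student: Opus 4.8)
The plan is to prove the three properties equivalent by establishing the cyclic chain $(i)\Rightarrow(ii)\Rightarrow(iii)\Rightarrow(i)$. Throughout, I work with the EZ-category $\RR$ as a generalized Reedy category; since we are dealing with presheaves, I keep in mind the reversal of roles of $\RR^\pm$ noted after Definition \ref{Eilenberg}, so that in $\wRR=\Sets^{\RR^\op}$ the relevant ``latching'' data at $r$ is governed by the non-invertible face operators into $r$ (the monomorphisms of $\RR$). The essential tools will be the standard decomposition of elements (Proposition \ref{standard}), the explicit description of skeleta (Corollary \ref{skeleton}), and the fact that the formal boundary satisfies $\partial\RR[r]=\sk_{d(r)-1}\RR[r]$.

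\emph{Proof of $(i)\Leftrightarrow(ii)$.} First I would unwind what a free $\Aut(r)$-extension means at the level of sets. The relative latching object $L_r(Y)$ at $r$ is, by Corollary \ref{skeleton} and Lemma \ref{latchingdef2}, the subset of $Y_r$ consisting of degenerate elements together with those coming from the image of $X$; more precisely, $X_r\cup_{L_r(X)}L_r(Y)$ picks out exactly the elements of $Y_r$ that are either in $\phi(X_r)$ or are degenerate in $Y$. Hence the complement $(Y_r)\backslash(X_r\cup_{L_r(X)}L_r(Y))$ consists precisely of the non-degenerate elements $y\in Y_r\backslash\phi(X)_r$. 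The relative latching map is a free $\Aut(r)$-extension if and only if it is monic and $\Aut(r)$ acts freely on this complement, which is exactly the conjunction of $\phi$ being monic with the triviality of each isotropy group $\{g\in\Aut(r)\mid g^*(y)=y\}$. The monicity of $\phi$ globally is equivalent to monicity of each relative latching map (using that $\phi$ is determined degreewise and that a map of presheaves is monic iff it is pointwise injective), so $(i)$ and $(ii)$ translate into one another after this bookkeeping.

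\emph{Proof of $(ii)\Rightarrow(iii)$.} Here I would use the standard decomposition to produce the attaching data explicitly. Fix $n$ and choose a set of representatives for the $\Aut(r)$-orbits of non-degenerate elements $y\in Y_r\backslash\phi(X)_r$ with $d(r)=n$; by $(ii)$ these orbits are free, so each contributes a copy of $\RR[r]$. The claim is that $\sk_n(\phi)$ is obtained from $\sk_{n-1}(\phi)$ by the pushout attaching $\coprod\RR[r]$ along $\coprod\partial\RR[r]$, where the coproduct runs over the chosen representatives. To verify this, I would check that an arbitrary element of $(\sk_n(Y))_s$ not already in $\sk_{n-1}(\phi)$ has a standard decomposition $\sigma_x\rho_x$ whose non-degenerate part $\sigma_x:\RR[r]\to Y$ lands in a unique orbit representative up to the free $\Aut(r)$-action, and that the restriction to $\partial\RR[r]$ factors through the lower skeleton. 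The essential uniqueness in Proposition \ref{standard}, combined with freeness of the action, guarantees that the attaching is along $\partial\RR[r]$ without redundancy, giving the desired pushout square.

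\emph{Proof of $(iii)\Rightarrow(i)$.} Finally, attaching representables along their formal boundaries is a purely combinatorial operation on sets, and I would read off $(i)$ from the shape of each attachment. At stage $n$, passing from $\sk_{n-1}(\phi)$ to $\sk_n(\phi)$ adds, in degree $n$, a disjoint copy of the free $\Aut(r)$-set $\Aut(r)$ for each attached representable $\RR[r]$ (since the non-degenerate generator has trivial isotropy, being glued only along $\partial\RR[r]=\sk_{d(r)-1}\RR[r]$ which contains no element of degree $n$). Therefore the relative latching map at each $r$ of degree $n$ is the inclusion of the already-present elements into a set obtained by freely adjoining copies of $\Aut(r)$, which is by definition a free $\Aut(r)$-extension. \textbf{The main obstacle} I anticipate is the step $(ii)\Rightarrow(iii)$: one must argue carefully that the formal boundary $\partial\RR[r]$ is exactly the right gluing locus, i.e. that every proper face of a non-degenerate generator already lies in the previous relative skeleton, and that no two distinct orbit representatives get identified under the attaching maps. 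Both points rest on the essential uniqueness of standard decompositions together with the freeness hypothesis, and keeping the $\Aut(r)$-equivariance straight throughout this identification is where the care is needed.
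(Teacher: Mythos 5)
There is a genuine gap in your step $(i)\Leftrightarrow(ii)$, which you dismiss as bookkeeping but which contains the one real mathematical point of this part of the proposition. Since $L_r(X)\inc X_r$ is monic, the source of the relative latching map is the set $(X_r\backslash L_r(X))\sqcup L_r(Y)$, and its injectivity amounts to \emph{three} conditions: $\phi_r$ is injective on non-degenerate elements, $L_r(Y)\inc Y_r$ is injective (automatic), and the images of the two summands in $Y_r$ are \emph{disjoint}, i.e.\ $\phi$ carries non-degenerate elements of $X$ to non-degenerate elements of $Y$. Your assertion that monicity of the relative latching map is ``exactly'' monicity of $\phi$ ignores this last condition, which is not formal: the paper derives it from the fact that split epimorphisms have the left lifting property against monomorphisms (concretely, if $\phi(x)=\sg^*(y)$ for a non-invertible degeneracy $\sg:r\to s$ with section $\delta$, then $y=\delta^*\phi(x)=\phi(\delta^*x)$, so $\phi(x)=\phi(\sg^*\delta^*x)$, and monicity of $\phi$ forces $x$ to be degenerate). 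Without it, $\phi$ monic does not give $(i)$: a non-degenerate $x\in X_r$ whose image $\phi(x)$ were degenerate would make the relative latching map non-injective, since $x$ and $\phi(x)\in L_r(Y)$ are distinct in the pushout but have equal images in $Y_r$.

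The converse direction is also unjustified, and your cycle $(i)\Rightarrow(ii)\Rightarrow(iii)\Rightarrow(i)$ genuinely needs it: condition $(i)$ only yields injectivity of $\phi_r$ on non-degenerate elements, and your parenthetical remark that a presheaf map is monic iff pointwise injective does not bridge the gap to degenerate ones. One must argue, e.g.\ by induction on degree via Proposition \ref{standard}: if $\phi(x)=\phi(x')$, take standard decompositions $x=\sg_x\rho_x$ and $x'=\sg_{x'}\rho_{x'}$; since $\phi$ preserves non-degeneracy, $(\phi\sg_x)\rho_x$ and $(\phi\sg_{x'})\rho_{x'}$ are standard decompositions of the same element, so essential uniqueness produces an isomorphism $\theta$ with $\rho_{x'}=\theta\rho_x$ and $\phi\sg_x=\phi(\sg_{x'}\theta)$, whence $\sg_x=\sg_{x'}\theta$ by injectivity on non-degenerates, and $x=x'$. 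It is worth noting that the paper sidesteps this difficulty entirely by orienting the cycle as $(ii)\Rightarrow(i)\Rightarrow(iii)\Rightarrow(ii)$, where $(iii)\Rightarrow(ii)$ follows cheaply because property $(ii)$ is stable under pushout and sequential colimit and holds for the boundary inclusions $\partial\RR[r]\inc\RR[r]$; you could either patch your $(i)\Rightarrow(ii)$ as above or adopt that orientation. Your sketches of $(ii)\Rightarrow(iii)$ and $(iii)\Rightarrow(i)$ are essentially sound, though in $(iii)\Rightarrow(i)$ you should also verify that a cell $\RR[s]$ of degree $m>n$ contributes no elements outside $\partial\RR[s]$ in degree $n$ (every map $r\to s$ with $d(r)<d(s)$ has non-invertible monic part, hence lies in the boundary).
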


\begin{proof}(ii)$\implies$(i). By Lemmas \ref{skeleton} and \ref{latchingdef2}, the latching object $L_r(X)$ may be identified with the subobject of \emph{degenerate} elements of $X_r$. Since $\phi$ is monic, the induced map $L_r(\phi):L_r(X)\to L_r(Y)$ is monic; moreover, since split epimorphisms have the left lifting property with respect to monomorphisms, $\phi$ takes non-degenerate elements of $X$ to non-degenerate elements of $Y$, i.e. the complement $X_r\backslash L_r(X)$ to the complement $Y_r\backslash L_r(Y)$. In particular, the relative latching map $X_r\cup_{L_r(X)}L_r(Y)\to Y_r$ is monic, and the complement of its image may be identified with the set of non-degenerate elements of $Y_r\backslash\phi(X)_r$.

  (i)$\implies$(iii). It follows from Lemmas \ref{idempotent} and \ref{latchingdef2} that the canonical map $\sk_{n-1}(\phi)\to\sk_{n}(\phi)$, evaluated at objects $r$ of degree $<n$, is an isomorphism, while at objects $r$ of degree $n$, it evaluates to $X_r\cup_{L_r(X)}L_r(Y)\to Y_r$. The latter is a free $\Aut(r)$-extension by hypothesis. Since neither $sk_{n-1}(\phi)$ nor $\sk_n(\phi)$ contain non-degenerate elements of degree $>n$, this shows that $\sk_n(\phi)$ is obtained from $\sk_{n-1}(\phi)$ by attaching, for each $\Aut(r)$-orbit in $\sk_n(\phi)_r\backslash\sk_{n-1}(\phi)_r$, a distinct copy of $\RR[r]$; since the orbit is free, the complement $\RR[r]\backslash\partial\RR[r]$ is freely attached.

(iii)$\implies$(ii). Since property (ii) is stable under pushout and sequential colimit, it suffices to show that the boundary inclusions $\partial\RR[r]\inc\RR[r]$ have property (ii). The only non-degenerate elements of $\RR[r]\backslash\partial\RR[r]$ are the automorphisms of $r$; the latter have trivial isotropy groups.\end{proof}

\noindent Cisinski shows the equivalence of \ref{discrete}(ii) and
\ref{discrete}(iii) in a slightly different setting, cf.
\cite[8.1.1, 8.1.29-35]{Ci}. In the special cases $\RR=\Gamma$ and
$\RR=\Omega$, the skeletal filtration \ref{discrete}(iii)
 has been described by Lydakis
\cite{Ly} and Moerdijk-Weiss \cite{MW2}.

A map $\phi:X\to Y$ in $\wRR$, fulfilling one of the equivalent
conditions of Proposition \ref{discrete}, will be called a
\emph{cofibration}. Condition \ref{discrete}(i) readily implies

\begin{cor}\label{discrete2}Let $\,\RR$ be an EZ-category and $\EE$ be a solid monoidal model category. A map of set-valued presheaves on $\,\RR$ is a cofibration if and only if the induced map in $\,\EE^{\RR^\op}$ is a Reedy cofibration.\end{cor}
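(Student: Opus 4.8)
The plan is to reduce both sides of the asserted equivalence to conditions on relative latching maps, and then to bridge these by the solidity hypothesis on $\EE$. By Proposition \ref{discrete}(i), a map $\phi:X\to Y$ of set-valued presheaves on $\RR$ is a cofibration exactly when, for every object $r$ of $\RR$, the relative latching map $X_r\cup_{L_r(X)}L_r(Y)\to Y_r$ is a free $\Aut(r)$-extension. On the other hand, writing $\bar\phi:\bar X\to\bar Y$ for the image of $\phi$ under the canonical functor $\wRR\to\EE^{\RR^\op}$, the map $\bar\phi$ is by definition a Reedy cofibration exactly when, for every $r$, the relative latching map $\bar X_r\cup_{L_r(\bar X)}L_r(\bar Y)\to\bar Y_r$ is a cofibration in $\EE^{\Aut(r)}$. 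Since the latching objects $L_r$ on both sides are those of the generalized Reedy structure on $\RR^\op$, the two families of conditions are indexed by the same objects $r$, and it suffices to compare them term by term.

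First I would record that the canonical functor $\Sets\to\EE$, $S\mapsto\coprod_SI_\EE$, preserves colimits, as it admits a right adjoint; the same holds $\Aut(r)$-equivariantly for the induced functor $\Sets^{\Aut(r)}\to\EE^{\Aut(r)}$. Because each latching object is a colimit and each relative latching map is formed by a pushout, this functor carries the relative latching map of $\phi$ at $r$ to the relative latching map of $\bar\phi$ at $r$, and does so compatibly with the $\Aut(r)$-actions. Thus the relative latching map of $\bar\phi$ at $r$ is exactly the discrete map obtained by applying the canonical functor to the relative latching map of $\phi$ at $r$.

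It then remains to invoke solidity. Condition (iii) in the definition of a solid monoidal model category asserts that every discrete cofibration in $\EE^G$ is a free $G$-extension, while the cofibrancy of the unit $I_\EE$ (condition (ii)) guarantees conversely that every free $G$-extension is a cofibration in $\EE^G$; together these show that a discrete map in $\EE^G$ is a cofibration if and only if it is a free $G$-extension. Applying this with $G=\Aut(r)$ to the (discrete) relative latching map of $\phi$ at $r$ shows that this map is a free $\Aut(r)$-extension if and only if its image in $\EE^{\Aut(r)}$ is a cofibration. Quantifying over all $r$ and combining with the previous step yields the equivalence. The one point deserving care --- the main, if mild, obstacle --- is to check that the identification of latching objects under the canonical functor is genuinely $\Aut(r)$-equivariant: it is this equivariance that lets a free $\Aut(r)$-extension of sets be recognized as a cofibration tested in $\EE^{\Aut(r)}$, rather than merely in $\EE$, matching the group action that enters the definition of a Reedy cofibration.
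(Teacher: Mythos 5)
Your proposal is correct and matches the paper's intended argument: the paper derives Corollary \ref{discrete2} directly from Proposition \ref{discrete}(i) (the proof is the one-line remark that condition (i) ``readily implies'' it), and the details you supply --- the colimit-preserving functor $\Sets^{\Aut(r)}\to\EE^{\Aut(r)}$ carrying relative latching maps to relative latching maps equivariantly, solidity condition (iii) identifying discrete cofibrations with free $\Aut(r)$-extensions, and cofibrancy of $I_\EE$ for the converse --- are exactly what the paper leaves implicit. Your closing emphasis on the $\Aut(r)$-equivariance of the identification of latching objects is the right point to flag, since the Reedy cofibration condition is tested in $\EE^{\Aut(r)}$ rather than in $\EE$.
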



\begin{dfn}\label{monoidal}An EZ-category $\,\RR$ is called \emph{quasi-monoidal}
if the presheaf topos $\wRR$ carries a symmetric monoidal structure
$(\wRR,\square,I_\square,\tau_\square)$ such
that\begin{enumerate}\item[(i)]the bifunctor
$-\square-:\wRR\times\wRR\to\wRR$ preserves colimits in both
variables;\item[(ii)]the unit $I_\square$ is
cofibrant;\item[(iii)]for all objects $\,r,s$ of $\,\RR$, the
boundary inclusions induce a pullback
square\begin{diagram}[small]\partial\RR[r]\square\partial\RR[s]&\rTo&\partial\RR[r]\square\RR[s]\\\dTo&&\dTo\\\RR[r]\square\partial\RR[s]&\rTo&\RR[r]\square\RR[s]\end{diagram}
in $\wRR$ consisting of cofibrations.
\end{enumerate}\end{dfn}

\noindent Since cofibrations are monic, the induced map
$$\RR[r]\square\partial\RR[s]\cup_{\partial\RR[r]\square\partial\RR[s]}\partial\RR[r]\square\RR[s]\to\RR[r]\square\RR[s]$$is
also monic, and hence, by \ref{discrete}(ii), a cofibration. It then
follows from \ref{discrete}(iii) and \ref{monoidal}(i) that the
class of cofibrations in $\wRR$ satisfies Hovey's pushout-product
axiom.

\begin{thm}\label{monoidalmodel}Let $\RR$ be a quasi-monoidal EZ-category and let $\EE$
be a cofibrantly generated, solid monoidal model category. Then the
functor category $\,\EE^{\RR^\op}$, equipped with the Reedy model
structure of \ref{model} and with the symmetric monoidal structure
obtained by convolution, is a cofibrantly generated, solid monoidal
model category.\end{thm}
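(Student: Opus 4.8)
The plan is to derive both halves of the conclusion—cofibrant generation and the solid monoidal structure—by reducing everything to the interaction of two pushout-product phenomena already in hand: the one for cofibrations of $\EE$ (since $\EE$ is a monoidal model category) and the one for cofibrations of $\wRR$ (established in the paragraph following Definition \ref{monoidal}). First I would fix the framework. As an EZ-category, $\RR$ is a dualizable generalized Reedy category, so $\RR^\op$ is again a generalized Reedy category and Theorem \ref{model} endows $\EE^{\RR^\op}$ with its Reedy model structure (latching and matching objects being formed in $\RR^\op$, so that face operators raise degree). The quasi-monoidal product $\square$ is cocontinuous in each variable by Definition \ref{monoidal}(i), hence by Day's theorem it determines a promonoidal structure on $\RR$ whose convolution makes $\EE^{\RR^\op}$ a closed symmetric monoidal category. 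The key formula I would record is the compatibility
\[(A\cdot S)\,\square\,(B\cdot T)\;\cong\;(A\otimes_\EE B)\cdot(S\,\square\,T),\]
for $A,B$ in $\EE$ and $S,T$ in $\wRR$, where $A\cdot S$ denotes the pointwise copower $r\mapsto\coprod_{S_r}A$. This says that the pointwise action of $\EE$ and the strong monoidal functor $\wRR\to\EE^{\RR^\op}$, $S\mapsto I_\EE\cdot S$, are compatible with convolution, and it is the engine of the whole argument.

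For cofibrant generation I would take as generating (trivial) cofibrations the external pushout-products $u\,\hat\boxtimes\,\partial_r$ of the generating (trivial) cofibrations $u$ of $\EE$ with the boundary inclusions $\partial_r\colon\partial\RR[r]\inc\RR[r]$, as $r$ ranges over the objects of $\RR$. By the proof of Proposition \ref{discrete}(iii) the complement $\RR[r]\setminus\partial\RR[r]$ is exactly the free $\Aut(r)$-torsor of isomorphisms onto $r$; hence the relative latching map of $u\,\hat\boxtimes\,\partial_r$ is, at $r$, the free $\Aut(r)$-cell $\Aut(r)_+\cdot u$, a generating cofibration of the projective structure on $\EE^{\Aut(r)}$, and an isomorphism at every object not isomorphic to $r$ (by Lemma \ref{latchingdef2} and Corollary \ref{skeleton}, since $\partial\RR[r]\to\RR[r]$ introduces non-degenerate elements only at $r$). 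Comparing with the levelwise definition of Reedy (co)fibrations shows that a map has the right lifting property against these generators precisely when it is a Reedy (trivial) fibration. Since colimits in $\EE^{\RR^\op}$ are pointwise and $\EE$ is cofibrantly generated, the requisite smallness holds and the small object argument applies.

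For the pushout-product axiom I would invoke the standard saturation reduction (cofibrations and trivial cofibrations are closed under pushout, transfinite composition and retract, and convolution is cocontinuous in each variable): it suffices to check that the convolution pushout-product of two generators is a Reedy cofibration, trivial as soon as one factor is. Applying the compatibility formula to $u\,\hat\boxtimes\,\partial_r$ and $u'\,\hat\boxtimes\,\partial_{r'}$ exhibits their convolution pushout-product as the external product of the $\EE$-pushout-product $u\,\hat\otimes_\EE\,u'$—a (trivial) cofibration of $\EE$ because $\EE$ is a monoidal model category—with the $\wRR$-pushout-product $\partial_r\,\hat\square\,\partial_{r'}$—a cofibration of $\wRR$ by the pushout-product property recorded after Definition \ref{monoidal}. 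Everything thus rests on one external statement: the external pushout-product of an $\EE$-(trivial-)cofibration with a $\wRR$-cofibration is a Reedy (trivial) cofibration in $\EE^{\RR^\op}$. I expect \emph{this} to be the main obstacle. It is proved by combining Corollary \ref{discrete2}, which identifies the new non-degenerate cells of a $\wRR$-cofibration as free $\Aut(r)$-orbits, with condition (iii) in the definition of a solid monoidal model category: copowering such a free orbit by an $\EE$-(trivial-)cofibration $u$ yields a free $\Aut(r)$-extension which is a (trivial) cofibration in $\EE^{\Aut(r)}$, and assembling these over all $r$ produces the desired Reedy (trivial) cofibration.

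It remains to verify the unit axiom and the three solidity conditions for $\EE^{\RR^\op}$. The convolution unit is $I_\EE\cdot I_\square$, the external product of the cofibration $0\to I_\EE$ (solidity (ii) of $\EE$) with the cofibration $0\to I_\square$ (Definition \ref{monoidal}(ii)); by the external statement just proved it is Reedy cofibrant, so the unit axiom is redundant exactly as observed in the definition of a solid monoidal model category. Solidity (i) holds because $S\mapsto\coprod_S(I_\EE\cdot I_\square)=(\coprod_S I_\EE)\cdot I_\square$ is faithful, being the composite of the faithful $\coprod_{(-)}I_\EE\colon\Sets\to\EE$ with $(-)\cdot I_\square$, faithful since $I_\square$ is non-initial; solidity (ii) is the cofibrancy just established; and for solidity (iii) I would use $(\EE^{\RR^\op})^G\cong(\EE^G)^{\RR^\op}$ together with the levelwise description of Reedy cofibrations: a discrete Reedy cofibration has relative latching maps that are discrete cofibrations in $\EE^{G\times\Aut(r)}$, hence free $(G\times\Aut(r))$-extensions by solidity (iii) of $\EE$, so in particular free $G$-extensions, and these assemble into a free $G$-extension in $\EE^{\RR^\op}$. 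This establishes that $\EE^{\RR^\op}$ is a cofibrantly generated, solid monoidal model category.
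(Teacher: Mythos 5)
Your proof is correct and, in all essentials, it is the paper's proof: you choose the same generating (trivial) Reedy cofibrations, obtained by twisting the generating (trivial) cofibrations $f$ of $\EE$ against the boundary inclusions $i_r:\partial\RR[r]\to\RR[r]$; you characterize Reedy (trivial) fibrations by the same adjointness argument, relating lifting against $f/i_r$ to lifting of $f$ against the relative matching map; you define the monoidal structure by the same Day convolution; and you reduce the pushout-product axiom for two generators to the same key isomorphism identifying their convolution pushout-product with $h/i_t$, where $h$ is a pushout-product in $\EE$ and $i_t$ a pushout-product of boundary inclusions in $\wRR$ (a cofibration by the remark after Definition \ref{monoidal}). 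The one genuine divergence is local, at the step you correctly flag as the main obstacle: to show $h/i_t$ is a (trivial) Reedy cofibration for an \emph{arbitrary} (trivial) cofibration $h$ of $\EE$ and cofibration $i_t$ of $\wRR$, the paper reduces to the generating case by observing that $(h,i)\mapsto h/i$ commutes with sequential colimits, retracts and pushouts in each variable, whereas you argue directly on relative latching maps, decomposing $i_t$ into free $\Aut(r)$-cells via Proposition \ref{discrete}(iii) and using that the free functor $\EE\to\EE^{\Aut(r)}$ is left Quillen; both arguments are sound, the paper's saturation argument being shorter, your cellular computation making explicit where the solidity hypotheses enter. You are also more scrupulous than the paper about the unit and solidity axioms for $\EE^{\RR^\op}$, which the paper dispatches by noting that the canonical functor $\Sets\to\EE^{\RR^\op}$ factors through $\wRR$ and that the unit is cofibrant by \ref{monoidal}(ii) and Corollary \ref{discrete2}. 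One small supplement to your solidity (iii) verification: when you assemble the free $G$-extensions on the latching complements into a pointwise free $G$-extension, you must also treat \emph{degenerate} elements of the complement; this is one line via Proposition \ref{standard} --- if $g\cdot y=y$, write the standard decomposition $y=\sg_y\rho_y$, note that $g\cdot y$ has standard decomposition $(g\cdot\sg_y)\rho_y$, and since $\rho_y$ is epic the comparison isomorphism between the two decompositions is the identity, so $g$ fixes the non-degenerate core $\sg_y$ and freeness at the latching level forces $g=e$.
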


\begin{proof}We shall first show that $\EE^{\RR^\op}$ is cofibrantly
generated, then define the symmetric monoidal structure
$\square_\EE$ on $\EE^{\RR^\op}$, and finally check the
pushout-product axiom for the generating (trivial) cofibrations of
$\EE^{\RR^\op}$.

The generating (trivial) Reedy cofibrations of $\EE^{\RR^\op}$ are
obtained by ``twisting'' the generating (trivial) cofibrations of
$\EE$ against the boundary inclusions of $\wRR$. To be more precise,
let $f:A\to B$ be an arbitrary generating (trivial) cofibration of
$\EE$ and let $i_r:\partial\RR[r]\to\RR[r]$ be a boundary inclusion
of $\wRR$. For brevity, for any object $A$ of $\EE$ and any set $S$,
the tensor $A\otimes_\EE(\coprod_SI_\EE)$ will be written
$A\otimes_\EE S$, and similarly for set-valued presheaves on $\RR$.
We thus obtain the following commutative
square\begin{diagram}[small]A\otimes_\EE\partial\RR[r]&\rTo&B\otimes_\EE\partial\RR[r]\\\dTo&&\dTo\\A\otimes_\EE\RR[r]&\rTo&B\otimes_\EE\RR[r]\end{diagram}
in $\EE^{\RR^\op}$. The induced comparison
map$$A\otimes_\EE\RR[r]\cup_{A\otimes_\EE\partial\RR[r]}B\otimes_\EE\partial\RR[r]\to
B\otimes_\EE\RR[r]$$ is a \emph{generating (trivial) Reedy
cofibration} of $\EE^{\RR^\op}$, and they are all of this form.
Indeed, since the Reedy model structure on $\EE^{\RR^\op}$ is well
defined by Theorem \ref{model}, the generating property just
expresses that a map $X\to Y$ is a trivial Reedy fibration (resp.
Reedy fibration) \emph{if and only if} it has the right lifting
property with respect to the generating Reedy cofibrations (resp.
trivial Reedy cofibrations). This in turn follows from the fact
that, by adjointness, one of the following two squares
\begin{diagram}[small]A\otimes_\EE\RR[r]\cup_{A\otimes_\EE\partial\RR[r]}B\otimes_\EE\partial\RR[r]&\rTo&X&\quad\quad&A&\rTo&X_r\\\dTo&&\dTo&\quad\quad&\dTo&&\dTo\\B\otimes_\EE\RR[r]&\rTo&Y&\quad\quad&B&\rTo&Y_r\times_{M_r(Y)}M_r(X)\end{diagram} has a diagonal filler if and only if the other has.

The symmetric monoidal structure
$-\square_\EE-:\EE^{\RR^\op}\times\EE^{\RR^\op}\to\EE^{\RR^\op}$ is
defined by the universal property that for any objects $\,X,Y,Z$ of
$\EE^{\RR^\op}$, maps $X\square_\EE Y\to Z$ in $\EE^{\RR^\op}$
correspond bijectively to natural systems of maps $X_r\otimes_\EE
Y_s\to Z_t$ in $\EE$, indexed by maps $\RR[t]\to\RR[r]\square\RR[s]$
in $\wRR$. In other words, we have the
formula\begin{gather*}(X\square_\EE
Y)_t=\varinjlim_{\RR[t]\to\RR[r]\square\RR[s]}X_r\otimes_\EE
Y_s.\end{gather*}In particular, the monoidal structure on
$\EE^{\RR^\op}$ is closed (cf. \cite[appendix]{MW}) and extends the
given one on $\wRR$; both share the same unit $I_\square$ (which is
cofibrant by \ref{monoidal}(ii) and \ref{discrete2}) so that the
canonical map $\Sets\to\EE^{\RR^\op}$ factors through $\,\wRR$.
Therefore, all axioms of a cofibrantly generated, solid monoidal
model category are satisfied, except possibly the pushout-product
axiom. In order to establish the latter, take two generating
cofibrations $f:A\to B,\,g:C\to D$ in $\EE$ as well as two boundary
inclusions $\,i_r,i_s$ of $\,\wRR$, and consider the associated
generating Reedy cofibrations
\begin{align*}f/i_r&:A\otimes_\EE\RR[r]\cup_{A\otimes_\EE\partial\RR[r]}B\otimes_\EE\partial\RR[r]\to
B\otimes_\EE\RR[r],\\
g/i_s&:C\otimes_\EE\RR[s]\cup_{A\otimes_\EE\partial\RR[s]}D\otimes_\EE\partial\RR[s]\to
D\otimes_\EE\RR[s].\end{align*}We shall denote them by
$f/i_r:A/i_r\to B/i_r$ and $g/i_s:C/i_s\to D/i_s$. We have to show
that the pushout-product
map\begin{gather}\label{pp}\left(A/i_r\square_\EE
D/i_s\right)\cup_{\left(A/i_r\square_\EE
C/i_s\right)}\left(B/i_r\square_\EE C/i_s\right)\to
\left(B/i_r\square_\EE D/i_s\right)\end{gather} is a Reedy
cofibration which is trivial if $f$ or $g$ is trivial.

The operation $(f,i_r)\mapsto f/i_r$ extends in an evident way to a
bifunctor
$$-/-:\Arr(\EE)\times\Arr(\wRR)\to\Arr(\EE^{\RR^\op}),$$where $\Arr(\CC)$ denotes the category of arrows in $\CC$.

It is now straightforward to verify that (\ref{pp}) is isomorphic to
$h/i_t$ where \begin{align*}h:A\otimes_\EE D&\cup_{A\otimes_\EE
C}B\otimes_\EE C\to B\otimes_\EE D\quad\text{and}\\
i_t:\RR[r]\square\partial\RR[s]&\cup_{\partial\RR[r]\square\partial\RR[s]}\partial\RR[r]\square\RR[s]\to\RR[r]\square\RR[s]\end{align*}
are the canonical comparison maps. Since in $\EE$ and $\wRR$ the
pushout-product axiom holds, it remains to be shown that for a
(trivial) cofibration $h$ in $\EE$, and cofibration $i_t$ in $\wRR$,
the map $h/i_t$ is a (trivial) Reedy cofibration in $\EE^{\RR^\op}$.
By the adjointness argument given above, this holds whenever $h$ is
a generating (trivial) cofibration, and $i_t$ a boundary inclusion;
the general case reduces to this special case, since the operation
$(h,i_t)\mapsto h/i_t$ commutes with sequential colimits and
retracts in each variable, and takes pushout squares in each
variable to pushout squares in $\EE^{\RR^\op}$.\end{proof}

\begin{exms}\label{monoidalEil}(a) The simplex category $\Delta$ is a quasi-monoidal EZ-category for
the cartesian product on $\widehat{\Delta}$. Therefore, the category
of simplicial spaces is a monoidal model category for the cartesian
product, where ``space'' means either compactly generated
topological space or simplicial set. In the latter case, the Reedy
cofibrations are precisely the monomorphisms, and the result is of
course well-known. Notice that in general, even for strict
EZ-categories $\RR$, the exactness axiom \ref{monoidal}(iii) may not
be true for the cartesian product on $\wRR$.\vspace{1ex}

(b) Segal's \cite{Se} category $\Gamma$ is a (quasi-)monoidal
EZ-category for the smash product on $\widehat{\Gamma}$, as can be
deduced from the work of Lydakis \cite{Ly}. This means that the
category of $\Gamma$-spaces, equipped with the strict model
structure of Bousfield-Friedlander \cite{BF}, is a monoidal model
category. \vspace{1ex}

(c) The category $\Omega$ for dendroidal sets (see \ref{omega}) is a
quasi-monoidal EZ-category for the \emph{Boardman-Vogt tensor
product} on $\widehat{\Omega}$, cf. \cite{MW,CM}. Therefore, the
category of dendroidal spaces is a monoidal model category in such a
way that the embedding $i:\Delta\inc\Omega$ induces a monoidal
Quillen adjunction between simplicial spaces and dendroidal spaces.
It can be shown that, in complete analogy to Rezk's localization of
simplicial spaces (the model structure for complete Segal spaces,
cf. \cite{Bg, JT, Lu, Rez}), there is a localization of the model
category of dendroidal spaces which is Quillen equivalent to the
model category of quasi-operads introduced in
\cite{CM}.\end{exms}\vspace{2ex}

\vspace{1ex}

\noindent{\small\sc Universit\'e de Nice, Lab. J.-A. Dieudonn\'e,
Parc Valrose, 06108 Nice, France.}\hspace{2em}\emph{E-mail:}
cberger$@$math.unice.fr

\noindent{\small\sc Mathematisch Instituut, Postbus 80.010, 3508 TA
Utrecht, The Netherlands.}\hspace{2em}\emph{E-mail:}
moerdijk$@$math.uu.nl


\begin{thebibliography}{99}

\bibitem{Ang}V. Angeltveit -- \emph{Enriched Reedy categories}, Proc. Amer. Math. Soc. \textbf{136} (2008), 2323--2332.
\bibitem{Ant}E.R. Antokoletz -- \emph{Nonabelian Algebraic Models for Classical Homotopy Types}, PhD thesis, University of California at Berkeley, 2008.
\bibitem{Ba}C. Barwick -- \emph{On Reedy Model Categories}, arXiv:math/0708.2832.
\bibitem{BM}M. M. Batanin, M. Markl -- \emph{Crossed interval groups and operations on the Hochschild cohomology}, arXiv:math/0803.2249.
\bibitem{Be}C. Berger -- \emph{Iterated wreath product of the simplex category and iterated loop spaces}, Adv. Math. \textbf{213} (2007), 230--270.
\bibitem{Bg}J. E.  Bergner -- \emph{Three models for the homotopy theory of homotopy theories}, Topology \textbf{46} (2007), 397--436.
\bibitem{Bo}F. Borceux -- \emph{Handbook of Categorical Algebra II}, Enc. Math. \textbf{51}, Camb. Univ. Press 1994.
\bibitem{BF}A. K. Bousfield, E. M. Friedlander -- \emph{Homotopy theory of $\,\Gamma$-spaces, spectra, and bisimplicial sets}, Lecture Notes in Math., vol. \textbf{658}, Springer Verlag 1978, 80--130.
\bibitem{BK}A. K. Bousfield, D. M. Kan -- \emph{Homotopy limits, completions, and localization}, Lecture Notes in Math., vol. \textbf{304}, Springer Verlag, 1972.
\bibitem{Ci}D.-C. Cisinski -- \emph{Les pr\'efaisceaux comme mod\`eles des types d'homotopie}, Ast\'erisque, vol. \textbf{308}, Soc. Math. France, 2006.
\bibitem{CM}D.-C. Cisinski and I. Moerdijk -- \emph{Dendroidal sets as a model for homotopy operads},
in prep.
\bibitem{Co}A. Connes -- \emph{Cyclic homology and functor $Ext^n$}, C. R. Acad. Sci. Paris \textbf{296} (1983), 953--958.
\bibitem{DHK}W. G. Dwyer, M. J. Hopkins, D. M. Kan -- \emph{The homotopy theory of cyclic sets}, Trans. Amer. Math. Soc. \textbf{291} (1985), 281--289.
\bibitem{EZ}S. Eilenberg, J. A. Zilber -- \emph{Semi-simplicial complexes and singular homology}, Ann. of Math. \textbf{51} (1950), 499--513.
\bibitem{FT}B. L. Feigin, B. L. Tsygan -- \emph{Additive $K$-theory}, Lecture Notes in Math., vol. \textbf{1289} Springer Verlag 1987, 67--209.
\bibitem{FL}Z. Fiedorowicz, J.-L. Loday -- \emph{Crossed simplicial groups and their associated homology}, Trans. Amer. Math. Soc. \textbf{326} (1991), 57--87.
\bibitem{GZ}P. Gabriel, M. Zisman -- \emph{Calculus of fractions
and homotopy theory}, Ergebnisse der Mathematik und ihrer
Grenzgebiete, vol. \textbf{35}, Springer Verlag 1967.
\bibitem{GJ}P. F. Goerss, R. J. Jardine -- \emph{Simplicial Homotopy  Theory}, Progress in Mathematics, vol. \textbf{174}, Birkh\"auser, 1999.
\bibitem{Ha}A. Haefliger -- \emph{Extension of complexes of groups}, Ann. Inst. Fourier, \textbf{42} (1992), 275--311.
\bibitem{Hi}P. Hirschhorn -- \emph{Model Categories and Their Localizations}, Math. Surveys Monogr., vol. \textbf{99}, Amer. Math. Soc. 2003.
\bibitem{Ho}M. Hovey -- \emph{Model Categories}, Math. Surveys Monogr., vol. \textbf{63}, Amer. Math. Soc., 1999.
\bibitem{Jo}A. Joyal -- \emph{Disks, duality and $\theta$-categories}, preprint (1997).
\bibitem{JT}A. Joyal, M. Tierney -- \emph{Quasi-categories vs Segal spaces}, Contemp. Math. \textbf{431}(2007), 277--326.
\bibitem{Kr}R. Krasauskas -- \emph{Skew-simplicial groups}, Lithuanian Math. J. \textbf{27} (1987), 47--54.
\bibitem{Lu}J. Lurie -- \emph{Higher Topos Theory}, arXiv:math/0608040.
\bibitem{Ly}M. Lydakis -- \emph{Smash products and $\,\Gamma$-spaces}, Math. Camb. Philos. Soc. \textbf{126} (1999), 311--328.
\bibitem{Mo}I. Moerdijk -- \emph{Orbifolds as groupoids: an introduction}, Contemp. Math. \textbf{310} (2002), 205--221.
\bibitem{MW}I. Moerdijk, I. Weiss  -- \emph{Dendroidal sets}, Alg. and Geom. Top. \textbf{7} (2007), 1441--1470.
\bibitem{MW2}I. Moerdijk, I. Weiss -- \emph{On inner Kan
complexes in the category of dendroidal sets}, arXiv:math/0701295.
\bibitem{Qu1}D. G. Quillen -- \emph{Homotopical Algebra}, Lecture Notes in Math., vol. \textbf{43}, Springer Verlag, 1967.
\bibitem{Qu2}D. G. Quillen -- \emph{Bivariant cyclic cohomology and models for cyclic homology types}, J. Pure Appl. Algebra \textbf{101} (1995), 1--33.
\bibitem{Ree}C. L. Reedy -- \emph{Homotopy theories of model categories}, preprint (1973).
\bibitem{Rez}C. Rezk -- \emph{A model for the homotopy theory of homotopy theory}, Trans. Amer. Math. Soc. \textbf{353} (2003), 973-1007.
\bibitem{Se}G. Segal -- \emph{Categories and cohomology theories}, Topology \textbf{13} (1974), 293--312.\end{thebibliography}
\end{document}